%
%
%
%
\documentclass{article}
%
\usepackage{amsmath,amsfonts,amssymb,amsthm,stmaryrd}
\usepackage{float}

\theoremstyle{plain}
\newtheorem{thrm}{Theorem}
\newtheorem{ass}[thrm]{Assumption}
\newtheorem{lmm}{Lemma}
\newtheorem{prpstn}{Proposition}

\theoremstyle{definition} 
\newtheorem{dfntn}{Definition}
\theoremstyle{remark}
\newtheorem{rmrk}{\textsc{Remark}}

\oddsidemargin -0.2cm
\evensidemargin -0.2cm
\topmargin -1cm
\textheight 22.5cm
\textwidth 16.2cm
\headheight 1.0cm

\usepackage{tikz}
\usetikzlibrary{patterns}
\usepackage{pgfplots}
\pgfplotsset{compat=newest}
\usetikzlibrary{calc}

\newcommand{\logLogSlopeTriangle}[5]
{

    \pgfplotsextra
    {
        \pgfkeysgetvalue{/pgfplots/xmin}{\xmin}
        \pgfkeysgetvalue{/pgfplots/xmax}{\xmax}
        \pgfkeysgetvalue{/pgfplots/ymin}{\ymin}
        \pgfkeysgetvalue{/pgfplots/ymax}{\ymax}

        \pgfmathsetmacro{\xArel}{#1}
        \pgfmathsetmacro{\yArel}{#3}
        \pgfmathsetmacro{\xBrel}{#1-#2}
        \pgfmathsetmacro{\yBrel}{\yArel}
        \pgfmathsetmacro{\xCrel}{\xArel}

        \pgfmathsetmacro{\lnxB}{\xmin*(1-(#1-#2))+\xmax*(#1-#2)} 
        \pgfmathsetmacro{\lnxA}{\xmin*(1-#1)+\xmax*#1} 
        \pgfmathsetmacro{\lnyA}{\ymin*(1-#3)+\ymax*#3} 
        \pgfmathsetmacro{\lnyC}{\lnyA+#4*(\lnxA-\lnxB)}
        \pgfmathsetmacro{\yCrel}{\lnyC-\ymin)/(\ymax-\ymin)} 
        
        \coordinate (A) at (rel axis cs:\xArel,\yArel);
        \coordinate (B) at (rel axis cs:\xBrel,\yBrel);
        \coordinate (C) at (rel axis cs:\xCrel,\yCrel);

        \draw[#5]   (A)-- node[pos=0.5,anchor=north] {1}
                    (B)-- 
                    (C)-- node[pos=0.5,anchor=west] {#4}
                    cycle;
    }
}

\newcommand{\logLogSlopeTriangleinv}[5]
{

    \pgfplotsextra
    {
        \pgfkeysgetvalue{/pgfplots/xmin}{\xmin}
        \pgfkeysgetvalue{/pgfplots/xmax}{\xmax}
        \pgfkeysgetvalue{/pgfplots/ymin}{\ymin}
        \pgfkeysgetvalue{/pgfplots/ymax}{\ymax}

        \pgfmathsetmacro{\xArel}{#1}
        \pgfmathsetmacro{\yArel}{#3}
        \pgfmathsetmacro{\xBrel}{#1-#2}
        \pgfmathsetmacro{\yBrel}{\yArel}
        \pgfmathsetmacro{\xCrel}{\xBrel}

        \pgfmathsetmacro{\lnxB}{\xmin*(1-(#1-#2))+\xmax*(#1-#2)} 
        \pgfmathsetmacro{\lnxA}{\xmin*(1-#1)+\xmax*#1} 
        \pgfmathsetmacro{\lnyA}{\ymin*(1-#3)+\ymax*#3} 
        \pgfmathsetmacro{\lnyC}{\lnyA+#4*(\lnxA-\lnxB)}
        \pgfmathsetmacro{\yCrel}{(\lnyC-\ymin)/(\ymax-\ymin)} 
        
        \coordinate (A) at (rel axis cs:\xArel,\yArel);
        \coordinate (B) at (rel axis cs:\xBrel,\yBrel);
        \coordinate (C) at (rel axis cs:\xCrel,\yCrel);

        \draw[#5]   (A)-- node[pos=0.5,anchor=north] {1}
                    (B)-- node[pos=0.5,anchor=east] {#4}
                    (C)-- 
                    cycle;
    }
}

\title{Finite element method with local damage on the mesh
\footnote{This work has been carried out in the framework of Archim\`ede Labex (ANR-11-LABX-0033) and of
the A*MIDEX project (ANR-11-IDEX-0001-02), funded by the ``Investissements d’'Avenir'' French Government programme managed by the French National Research Agency (ANR). The authors acknowledge the support of the ANR project CroCo ANR-16-CE33-0008 and that of R\'egion Bourgogne Franche-Comt\'e ``Convention R\'egion 2015C-4991. Mod\`eles math\'ematiques et m\'ethodes num\'eriques pour l'\'elasticit\'e non-lin\'eaire''.}}
%
\author{Michel Duprez\footnote{Aix Marseille Universit\'{e}, CNRS, Centrale Marseille, I2M, UMR 7373, 13453 Marseille, France. e-mail: \texttt{mduprez@math.cnrs.fr}}
, ~Vanessa Lleras\footnote{IMAG, Univ Montpellier, CNRS, Montpellier, France.
\texttt{vanessa.lleras@umontpellier.fr}}
~ and ~Alexei Lozinski\footnote{Laboratoire de Math\'ematiques de Besan\c{c}on, UMR CNRS 6623,
Universit\'e Bourgogne Franche-Comt\'e,
16, route de Gray, 25030 Besan\c{c}on Cedex,
France. e-mail: \texttt{alexei.lozinski@univ-fcomte.fr}}}
\date{\today}
\begin{document}

\maketitle

\begin{abstract} 
We consider the finite element method on locally damaged meshes allowing for some distorted cells which are isolated from one another. 
In the case of the Poisson equation and piecewise  linear  Lagrange  finite  elements,
we show that the usual \textit{a priori} error estimates remain valid on such meshes. 
We also propose an alternative finite element scheme which is optimally convergent and, moreover, well conditioned, \textit{i.e.} its conditioning is of the same order as that of a standard finite element method on a regular mesh of comparable size.
\end{abstract}
%
%
%
%

\section{Introduction}

We are interested in the finite element method on meshes containing some isolated degenerated cells. 
The meshes of this type can be encountered in bio-mechanical applications where the objects with very complicated geometry (as a human face) should be meshed, and the mesh generators or mesh morphing techniques are not always able to satisfy the usual regularity constraints (see \textit{e.g.} \cite[p.3]{bucki2010fast}).
Our work is a preliminary study in which we propose a suitable finite element approximation in such situations without requiring to reconstruct a high quality mesh everywhere. We restrict ourselves to the simplest model: the Poisson equation with Dirichlet boundary conditions
\begin{equation}\label{eq:poisson}
\left\{\begin{array}{cl}
-\Delta u=f&\mbox{ in }\Omega,\\
 u=0&\mbox{ on }\partial\Omega
\end{array}\right.
\end{equation}
where $\Omega$ is a bounded polygonal (resp. polyhedral) domain in $\mathbb{R}^n$, $n=2$ (resp. $n=3$),  $\partial\Omega$ is its boundary,
and $f\in L^2(\Omega)$ is a given function. We only consider the standard piecewise linear continuous finite elements on a simplicial mesh without hanging nodes.
The formal (quite usual) definitions of the exact and approximated solutions to \eqref{eq:poisson} in the appropriate functional spaces  are given in the beginning of Section \ref{sec:a priori}.

The first goal of the present work is to highlight that we can recover the optimal convergence of the finite element method
even if the mesh contains several isolated almost degenerated simplexes.
More precisely, we shall assume that the majority of the simplexes in the mesh are regular in the usual Ciarlet sense \cite{ciarlet1978} but there are some distorted simplexes that are typically adjacent to regular mesh cells and well separated from one another by layers of regular cells.
The formal assumptions will be given in the beginning of Section \ref{sec:a priori}. To prove the optimal convergence of the standard finite element method,  we shall construct a modification of the nodal interpolation operator replacing the standard interpolating polynomial on a degenerated cell by another one obtained by averaging the interpolated function on a patch of cells surrounding the degenerated one.

Although the standard finite element method turns out optimally convergent  on the locally damaged meshes, as outlined above, it can suffer from bad conditioning of the stiffness matrix. Indeed, the gradient operator can have an arbitrary large norm on the space of piecewise polynomial functions on a mesh containing very elongated cells even if all the cells are of approximately the same diameter $h$.  The same issue of bad conditioning can be found in recent finite element methods on geometrically unfitted meshes such as CutFEM, cf. the review in \cite{Burman15}. The mesh is allowed to be cut by the domain boundary in this approach giving rise to eventually very narrow computational cells, and consequently to very ill-conditioned stiffness matrices. The workaround consists in introducing some stabilization terms which come in (at least) two forms: \textit{(i)} an augmented Lagrangian type approach involving the polynomial extension from ``good'' (uncut) to ``bad'' (cut) cells \cite{HaslingerRenard}; \textit{(ii)} \textit{ghost penalty} terms on the facets \cite{burmancras} of the cut cells which reduce the jumps betweens the gradients of the finite element solution between the ``bad'' and ``good'' cells. In both cases, the goal is to make the finite element solution on a ``bad'' cell to be aligned with its counterpart on a neighboring ``good'' cell. In the present paper, we inspire ourselves from both approaches described above and propose an alternative finite element discretization in which the approximated solution on the degenerated cells is made to be aligned with that on neighboring regular cells. We are able to prove that such a scheme is optimally convergent and well conditioned, \textit{i.e.} its conditioning is of the same order as that of a standard finite element method on a usual regular mesh of comparable size, provided the number of degenerated cells remains uniformly bounded.  \\


The present article is a contribution to the already rich literature studying the influence of the the mesh cell geometry on  the convergence of finite element approximations. The optimal $H^1$-convergence has been proved in \cite{zlamal1968} for second order elliptic equation and in \cite{zenisek} for linear elasticity equations under the \textit{minimum angle condition} in 2D:
there exists $\alpha_0\in(0,\pi)$ such that for any mesh cell $K$, 
\begin{equation}\label{eq:cond min angle}
0<\alpha_0\leqslant \alpha_K,
\end{equation}
where $\alpha_K$ is the minimum angle of  $K$. In \cite{brandts2, brandts3}, this condition was generalized to the higher dimensions.
If we denote by $h_K$ the diameter of $K$ and $\rho_K$ the diameter of the largest ball contained in $K$,
then \eqref{eq:cond min angle} is equivalent to already mentioned \textit{Ciarlet condition}  \cite{ciarlet1978}:
there exists $c_0$ such that for all mesh cells $K$
\begin{equation}\label{eq:cond ball}
h_{K}/\rho_{K}\leqslant c_0.
\end{equation}

The conditions above were further relaxed in several ways. 
Three groups (see \cite{BabuskaAziz1976,barnhill1976,jamet1976})
have proposed independently in 1976 a weaker assumption called the 
\textit{maximum angle condition}: there exists $\beta_0\in(0,\pi)$ such that 
for any mesh cell $K$ 
\begin{equation}\label{eq:cond max}
\beta_K\leqslant \beta_0<\pi,
\end{equation}
where $\beta_K$ is the maximum angle of $K$. The first condition \eqref{eq:cond min angle} implies the second \eqref{eq:cond max}. The second condition was generalized  for higher dimensions in \cite{jamet, kricek}. 

Furthermore, it is shown in \cite{annukainen2012} that even
the maximum angle condition may be not necessary. More precisely, if a degenerated  triangulation is included in a non-degenerated one, then optimal convergence rates.
The convergence on appropriate anisotropic meshes is studied in \cite{apel1999}. A sufficient condition for convergence (not necessarily of optimal order) was derived in \cite{kobayashi} under the name of the \textit{circumradius condition}: $\max_{K} R_K\to 0$ as $h\to 0$ where $R_K$ is the circumradius of the mesh cell $K$. Both the maximum angle and circumradius conditions for $O(h^\alpha)$ convergence are generalized in \cite{kucera}. It is proved that the triangulations can contain many elements violating these conditions as long as their maximum angle vertexes are sufficiently small size.  However, one cannot hope for an optimal convergence on completely arbitrary meshes: an example of a heavily distorted mesh family stemming from \cite{BabuskaAziz1976} has been recently analyzed in \cite{oswald2015} showing rigorously that the finite element method may fail to converge at all. The present paper propose yet another choice of assumptions on the mesh in the spirit of, but different from \cite{annukainen2012}, guaranteeing the optimal convergence.\\

The rest of the paper is organized as follows: 
in Section \ref{sec:a priori}, we prove that one can allow degenerated cells, which violate Condition 
\eqref{eq:cond min angle} or \eqref{eq:cond max}, if they are isolated in some sense.
We shall establish in Subsection \ref{sec:a priori normal} the optimal  $L^2$- and $H^1$-convergence of the standard finite elements method on such meshes. We also recall, in Subsection \ref{sec:poor}, the well known fact that the presence of degenerated cells may induce a large conditioning number of the stiffness matrix. We then propose in Section \ref{sec:alter} 
a modified finite element method that preserves the optimal convergence while ensuring a good conditioning.
We conclude with some numerical illustrations in Section \ref{sec:sim num}.

\section{Approximation  by linear finite elements under local mesh damage assumption}\label{sec:a priori}

Let us first recall the notions of the weak and approximated solutions to System \eqref{eq:poisson}. 
 We call a \textit{weak solution} in $V:=H^1_0(\Omega)$ to System \eqref{eq:poisson} a function $u\in V$ such that
\begin{equation}\label{cont prob}
a(u,v)=l(v)\mbox{ for all }v\in V,
\end{equation}
where  the bilinear form $a$ and the linear form $l$ are defined for all $u,v\in V$ by
$$a(u,v):=\int_{\Omega}\nabla u\cdot\nabla v~dx
\mbox{ \hspace*{1cm}and\hspace*{1cm} }l(v):=\int_{\Omega}fv~dx.$$
It is well known  that System \eqref{eq:poisson} admits a unique weak solution thanks to Lax-Milgram lemma.

Consider now a simplicial mesh  $\mathcal{T}_h$ on $\Omega$ without hanging nodes. This means that $\bar\Omega=\cup_{K\in\mathcal{T}_h}K$ with each mesh cell $K\in\mathcal{T}_h$ being a simplex (triangle in 2D, tetrahedron in 3D) and every two mesh cells $K_1,K_2\in\mathcal{T}_h$ being either disjoint or sharing a vertex, an edge, or a face (in 3D). We recall that $\rho_K$ denotes the diameter of the largest ball contained in a mesh cell $K$. Moreover, $h_\omega$ will denote the diameter of any bounded domain $\omega$ and we set $h=\max_{K\in\mathcal{T}_h}h_K$. As mentioned in the Introduction, we will assume that the cells of mesh $\mathcal{T}_h$ satisfy Ciarlet Condition \eqref{eq:cond ball} up to some isolated cells.

\begin{ass}\label{cond: plat}
Let $c_0>0$ and $K_1^{deg},...,K_I^{deg}$ be the \textit{degenerated cells} violating Ciarlet Condition \eqref{eq:cond ball}, \textit{i.e.} for $i\in\{1,...,I\}$
\begin{equation*}
K_i^{deg}\in\mathcal{T}_h
\mbox{ and }
h_{K_i^{deg}}/\rho_{K_i^{deg}}>c_0.
\end{equation*}
Each $K_i^{deg}$ is included in a patch $\mathcal{P}_i$, which is a union of mesh cells, star-shaped with respect to a ball of diameter $\rho_{\mathcal{P}_i}$ such that 
\begin{equation*}
h_{\mathcal{P}_i}/\rho_{\mathcal{P}_i}\leqslant c_1.
\end{equation*}
We denote by $\widetilde{\mathcal{P}}_i\supset{\mathcal{P}}_i$ the larger patch composed of mesh cells sharing at least a vertex with $\mathcal{P}_i$. Then
\begin{itemize}
\item The patches $\widetilde{\mathcal{P}}_i$ are mutually disjoint, \textit{i.e.} $\widetilde{\mathcal{P}}_i$ and $\widetilde{\mathcal{P}}_j$ have no common cells for $i\not= j$.
\item The number of cells in each $\widetilde{\mathcal{P}}_i$ is bounded by a constant $M$.
\end{itemize}
The intersection of boundaries $\partial\mathcal{P}_i$ and $\partial\Omega$ is either empty, or is reduced to a point, or is a subset of one side of the polygon/polyhedron $\Omega$ containing an $(n-1)$-dimensional ball of radius $\ge c_2h_{\mathcal{P}_i}$.
\end{ass}
\noindent\textbf{Notational warning.} In what follows, the letter $C$ will stand for constants which depend only on the generalized mesh regularity in the sense of Assumption \ref{cond: plat} (unless stated otherwise). This means that $C$ can depend on $c_0$, $c_1$, $c_2$, and $M$, but otherwise independent from the choice of mesh $\mathcal{T}_h$.  As usual, the value of $C$ can change from one line to another.

\medskip

An example of patches ${\mathcal{P}}_i$ and $\widetilde{\mathcal{P}}_i$ is given in Fig. \ref{fig:flatelement}. We illustrate there a typical situation of a degenerated triangle $K_i^{deg}$ (dashed in red) adjacent to a regular triangle $K_i^{nd}$ (dashed in grey). The patch  ${\mathcal{P}}_i$ is then formed of these two triangles $K_i^{deg}$ and $K_i^{nd}$. It is obviously star-shaped with respect to a ball (for example, the largest ball inscribed in $K_i^{nd}$). Its chunky parameter $h_{\mathcal{P}_i}/\rho_{\mathcal{P}_i}$ is close to that of surrounding regular triangles. Note, however, that Assumption \ref{cond: plat} allows for more general configurations, for example, a patch can contain several degenerated cells. 

\begin{figure}[H]
\begin{center}
\begin{tikzpicture}[scale=1.5]
\draw[-
] (1,1) -- (1,2);
\draw[-] (2.2,0) -- (1.7,1.3);
\draw[-] (2,2) -- (2,3);
\draw[-
] (3,1.8) -- (2,3);
\draw[-] (1,2) -- (2,3);
\draw[-
] (3,1) -- (3,1.8);
\draw[-
] (1.2,0) -- (2.2,0);
\draw[-] (1.7,1.3) -- (3,1);
\draw[-
] (1,2) -- (2,2);
\draw[-] (1,1) -- (2,2);
\draw[-] (1.2,0) -- (1,1);
\draw[-
] (1,1) -- (1.7,1.3);
\draw[-
] (1.7,1.3) -- (2,2);
\draw[-] (2,2) -- (3,1.8);
\draw[-
] (2.2,0) -- (3,1);
\draw[-] (1.2,0) -- (1.7,1.3);
\draw[-] (1.7,1.3) -- (3,1.8);
\draw[-
] (1.2,0) -- (0,0.5);
\draw[-] (0,0.5) -- (1,1);
\draw[-
] (0,0.5) -- (0,1.5);
\draw[-] (0,1.5) -- (1,1);
\draw[-] (0,1.5) -- (1,2);
\draw[-
] (0,1.5) -- (0.2,2.7);
\draw[-] (0.2,2.7) -- (1,2);
\draw[-
] (0.2,2.7) -- (2,3);
\fill [opacity=1,pattern=dots,pattern color=red] (1,1) -- (1.7,1.3) -- (2,2);
\fill [opacity=0.5,pattern=dots] (1,1) -- (1,2) -- (2,2);
\path (1.3,1.75) node {\small{$K^{nd}_i$}};
\path (2.3,1.7) node {\small{$K_{i,1}$}};
\path (2.7,1.4) node {\small{$K_{i,2}$}};
\path (2.3,0.8) node {\small{$K_{i,3}$}};
\path (1.65,0.4) node {\small{$K_{i,4}$}};
\path (1.25,0.8) node {\small{$K_{i,5}$}};
\path (0.8,0.5) node {\small{$K_{i,6}$}};
\path (0.5,1) node {\small{$K_{i,7}$}};
\path (0.6,1.5) node {\small{$K_{i,8}$}};
\path (0.5,2) node {\small{$K_{i,9}$}};
\path (1,2.5) node {\small{$K_{i,10}$}};
\path (1.7,2.2) node {\small{$K_{i,11}$}};
\path (2.3,2.2) node {\small{$K_{i,12}$}};
\end{tikzpicture}
\caption{Example of configuration: patch $\mathcal{P}_i$ (dashed), patch 
$\widetilde{\mathcal{P}}_i$ (all the cells), non-degenerated cell $K_i^{nd}$ (gray) of the patch $\mathcal{P}_i$.}
\label{fig:flatelement}
\end{center}\end{figure}
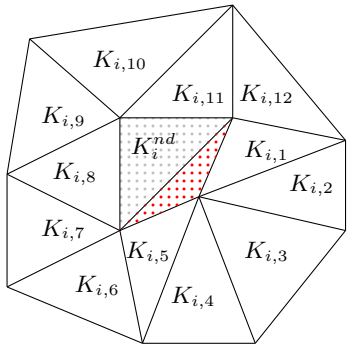

We now set the finite element space on mesh $\mathcal{T}_h$ and the finite element approximation to System \eqref{eq:poisson}. Let 
$$V_h:=\{v_h\in V:v_{h|K}\in \mathbb{P}_1(K)~\forall K\in \mathcal{T}_h\},$$
where $\mathbb{P}_1(K)$ is the space of polynomials of degree $\le 1$ on cell $K$.
Consider the following finite element approximation to System \eqref{cont prob}: find $u_h\in V_h$ such that:
\begin{equation}\label{discret prob}
a(u_h,v_h)=l(v_h)\mbox{ for all }v_h\in V_h.
\end{equation}

\subsection{\textit{A priori} error estimate}\label{sec:a priori normal}

In what follows, $|\cdot|_{i,A}$ and $\|\cdot\|_{i,A}$
denote the semi-norm and the norm associated to $H^i(A)$.

\begin{thrm}\label{th: a priori}
Let $u\in V$ and $u_h\in V_h$ be the solutions 
to System \eqref{cont prob} and System \eqref{discret prob}, respectively.
 Then, under Assumption \ref{cond: plat},
\begin{equation}\label{eq:a priori}
 | u - u_h |_{1,\Omega} \leq C 
  h | u |_{2,\Omega}.
\end{equation}
Moreover, if $\Omega$ is convex,
\begin{equation}\label{eq:a priori L2}
 \| u - u_h \|_{0,\Omega} \leq C
  h^2 | u |_{2,\Omega}.
\end{equation}
\end{thrm}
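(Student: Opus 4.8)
The plan is to reduce both estimates to the construction of a single modified interpolation operator that coincides with the usual Lagrange interpolant away from the damaged cells and is stabilized by local averaging on the patches $\mathcal P_i$. The starting point is that here the bilinear form $a$ is exactly the inner product of $V=H^1_0(\Omega)$ inducing the norm $|\cdot|_{1,\Omega}$, so C\'ea's lemma holds with constant $1$: from Galerkin orthogonality $a(u-u_h,v_h)=0$ for all $v_h\in V_h$ one gets
\begin{equation*}
|u-u_h|_{1,\Omega}=\min_{v_h\in V_h}|u-v_h|_{1,\Omega}\leq |u-\widetilde{I}_h u|_{1,\Omega}.
\end{equation*}
Hence it suffices to exhibit one admissible $\widetilde{I}_h u\in V_h$ with $|u-\widetilde{I}_h u|_{1,\Omega}\leq C h\,|u|_{2,\Omega}$. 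Since $2>n/2$ for $n\in\{2,3\}$, we have $u\in H^2(\Omega)\subset C^0(\bar\Omega)$ and the nodal values of $u$ are well defined.

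I would define $\widetilde{I}_h u$ through its vertex values: at every vertex $a$ lying outside all the closed patches $\overline{\mathcal P_i}$ I set $\widetilde{I}_h u(a)=u(a)$ (the standard interpolant), while at a vertex $a\in\overline{\mathcal P_i}$ I replace the nodal value, meaningless on a flat cell, by $(\Pi_i u)(a)$, where $\Pi_i\colon L^1(\mathcal P_i)\to\mathbb P_1$ is an averaged Taylor polynomial on the patch; vertices on $\partial\Omega$ receive the value $0$, so that $\widetilde{I}_h u\in V_h$. Because $\mathcal P_i$ is star-shaped with respect to a ball with $h_{\mathcal P_i}/\rho_{\mathcal P_i}\leq c_1$, the Bramble--Hilbert theory on this \emph{chunky} domain gives, for $m\in\{0,1\}$,
\begin{equation*}
\|u-\Pi_i u\|_{m,\mathcal P_i}\leq C h_{\mathcal P_i}^{2-m}|u|_{2,\mathcal P_i},\qquad \|u-\Pi_i u\|_{L^\infty(\mathcal P_i)}\leq C h_{\mathcal P_i}^{2-n/2}|u|_{2,\mathcal P_i},
\end{equation*}
the last bound coming from the scaled Sobolev embedding on $\mathcal P_i$.

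I would then split $\Omega$ into cells of three types and sum. On a cell $K$ disjoint from every $\overline{\mathcal P_i}$, $\widetilde{I}_h u$ coincides with the Lagrange interpolant of $u$ and $K$ is regular, so $|u-\widetilde{I}_h u|_{1,K}\leq C h_K|u|_{2,K}$. On a cell $K\subset\mathcal P_i$ of an interior patch, $\widetilde{I}_h u$ equals the single polynomial $\Pi_i u$ (nodal interpolation reproduces $\mathbb P_1$), whence $|u-\widetilde{I}_h u|_{1,K}=|u-\Pi_i u|_{1,K}$ and summation over $\mathcal P_i$ yields $C h_{\mathcal P_i}|u|_{2,\mathcal P_i}$, using only the good geometry of the whole patch and never the shape of the flat cell. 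The delicate cells are those of the collar $\widetilde{\mathcal P}_i\setminus\mathcal P_i$, where $\widetilde{I}_h u$ mixes the values $u(a)$ and $\Pi_i u(a)$: comparing with the nodal interpolant $I_K u$, the difference $\widetilde{I}_h u-I_K u$ is a $\mathbb P_1$ function vanishing at the outer vertices and equal to $\Pi_i u(a)-u(a)$ at the vertices on $\partial\mathcal P_i$, so an inverse inequality on the \emph{regular} collar cell combined with the $L^\infty$ bound gives
\begin{equation*}
|\widetilde{I}_h u-I_K u|_{1,K}\leq C h_K^{n/2-1}\max_{a}|\Pi_i u(a)-u(a)|\leq C h\,|u|_{2,\mathcal P_i},
\end{equation*}
where the last step uses $h_K,h_{\mathcal P_i}\leq h$. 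Adding $|u-I_K u|_{1,K}\leq C h_K|u|_{2,K}$ and summing over the at most $M$ collar cells closes the bound; the disjointness of the $\widetilde{\mathcal P}_i$ prevents any overlap, which is precisely what keeps $C$ independent of the number $I$ of defects. The main obstacle I anticipate is the treatment of patches meeting $\partial\Omega$, on which $\widetilde{I}_h u$ must vanish at boundary vertices and therefore no longer equals $\Pi_i u$; this is exactly where the last clause of Assumption \ref{cond: plat} enters. Since $\partial\mathcal P_i\cap\partial\Omega$ contains an $(n-1)$-dimensional ball of radius $\geq c_2 h_{\mathcal P_i}$ and $u=0$ there, a scaled Poincar\'e--Friedrichs inequality on $\mathcal P_i$ controls $\Pi_i u$ on that portion of the boundary, so forcing the boundary nodal values to $0$ perturbs $\widetilde{I}_h u$ only by $O(h_{\mathcal P_i}^{2-n/2}|u|_{2,\mathcal P_i})$ per vertex, absorbed by the same inverse-inequality argument. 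Collecting the three contributions proves $|u-\widetilde{I}_h u|_{1,\Omega}\leq C h\,|u|_{2,\Omega}$, and \eqref{eq:a priori} follows from the best-approximation identity above.

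Finally, for the $L^2$ estimate \eqref{eq:a priori L2} I would use the Aubin--Nitsche duality. Let $z\in V$ solve $a(w,z)=\int_\Omega (u-u_h)\,w\,dx$ for all $w\in V$; convexity of $\Omega$ gives $z\in H^2(\Omega)$ with $|z|_{2,\Omega}\leq C\|u-u_h\|_{0,\Omega}$. Taking $w=u-u_h$, subtracting $\widetilde{I}_h z$ by Galerkin orthogonality, and applying the already proved $H^1$ estimate to both factors yields
\begin{equation*}
\|u-u_h\|_{0,\Omega}^2=a(u-u_h,z-\widetilde{I}_h z)\leq |u-u_h|_{1,\Omega}\,|z-\widetilde{I}_h z|_{1,\Omega}\leq C h\,|u|_{2,\Omega}\cdot C h\,|z|_{2,\Omega},
\end{equation*}
and dividing by $\|u-u_h\|_{0,\Omega}$ gives the claimed $O(h^2)$ bound.
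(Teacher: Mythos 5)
Your construction is, away from the boundary, exactly the paper's: Céa's lemma, the averaged Taylor polynomial $\Pi_i u$ on each star-shaped patch (Lemma \ref{lemma:IntpP}), the standard Lagrange interpolant elsewhere, and the collar cells of $\widetilde{\mathcal{P}}_i\setminus\mathcal{P}_i$ handled by the inverse inequality $|r|_{1,K}\leq Ch^{n/2-1}\|r\|_{L^\infty(K)}$ on \emph{regular} cells combined with the $L^\infty$ interpolation bound (this is Definition \ref{DefTildeIh} and Proposition \ref{prop:global int}), with Aubin--Nitsche for \eqref{eq:a priori L2}. The one place where you deviate --- the patches touching $\partial\Omega$ --- is where your proof has a genuine gap. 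Forcing the nodal values to $0$ at boundary vertices does give a conforming $\widetilde{I}_h u$, but the resulting perturbation is a combination of hat functions supported on the patch cells containing those vertices, and nothing in Assumption \ref{cond: plat} prevents these cells from being the degenerated ones (e.g.\ a flat triangle whose long edge lies on a side $\Gamma$ of $\partial\Omega$). On such a cell the inverse inequality you invoke is false with an $h$-independent constant: for $K^{deg}$ with $\rho_{K^{deg}}=\varepsilon$, the hat function $\phi$ of the vertex opposite the long face satisfies $|\phi|_{1,K^{deg}}^2\sim h^{n-1}/\varepsilon$ (this is precisely the computation underlying Proposition \ref{prop:poor}). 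Hence your per-vertex perturbation of size $O(h^{2-n/2}|u|_{2,\mathcal{P}_i})$ contributes $O\bigl(h^{3/2}\varepsilon^{-1/2}|u|_{2,\mathcal{P}_i}\bigr)$ to the $H^1$ error, which is $O(h^{1/2})$ already for the paper's own regime $\varepsilon\sim h^2$ and $O(1)$ for $\varepsilon\sim h^3$. The phrase ``absorbed by the same inverse-inequality argument'' is exactly the step that fails: that argument was legitimate on collar cells because all degenerated cells lie inside the patches $\mathcal{P}_i$, whereas here the perturbation lives inside the patch itself.

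The paper avoids this by never leaving the space of single affine polynomials on the patch: in the boundary cases of Lemma \ref{lemma:IntpP} it subtracts from $Q^2v$ an affine correction $c_h$ that coincides with $Q^2v$ on $\partial\mathcal{P}_i\cap\Gamma$ and is constant in the direction normal to $\Gamma$, so that $Q_h^i(v)$ vanishes identically on the boundary portion while remaining one polynomial on all of $\mathcal{P}_i$; the gradient of $c_h$ is then controlled by an inverse inequality on the $(n-1)$-dimensional ball of radius $\geq c_2 h_{\mathcal{P}_i}$ contained in $\partial\mathcal{P}_i\cap\Gamma$ --- this is the true role of the last clause of Assumption \ref{cond: plat}, which you correctly sensed was needed but deployed in the wrong place --- and no shape regularity of degenerated cells is ever used (the one-point intersection case is handled by subtracting the constant $Q^2v(x)$). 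Replacing your nodal zero-forcing by this patchwise affine correction repairs the argument; everything else in your proposal, including the collar estimate and the duality step, matches the paper and is sound.
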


The proof of this theorem is completely standard (
\textit{cf.} \cite{ciarlet1978,brenner2007mathematical}) provided one has constructed an interpolant to $V_h$ satisfying the optimal error estimates. We thus 
go directly to the construction of such an interpolation operator which we shall call $\widetilde{\mathcal{I}}_h$ and properly introduce in Definition \ref{DefTildeIh}. The necessary properties of this operator will be established in the Proposition \ref{prop:global int}. We start with some technical lemmas.



\begin{lmm}\label{lemma:IntpP}
Under Assumption \ref{cond: plat}, for any $v\in H^2(\Omega)\cap H^1_0(\Omega)$ on any patch $\mathcal{P}_i$ there exists a polynomial $Q_h^i(v)$ on $\mathcal{P}_i$ of degree $\le 1$ vanishing on $\partial\mathcal{P}_i\cap\partial\Omega$ such that
\begin{equation}\label{eqlemma:IntpP}
 | v - Q_h^i(v) |_{1,\mathcal{P}_i}\leq C h_{\mathcal{P}_i} |v|_{2,\mathcal{P}_i}, 
 \quad
 \| v - Q_h^i(v) \|_{0,\mathcal{P}_i}\leq C h_{\mathcal{P}_i}^2 |v|_{2,\mathcal{P}_i}, 
 \quad
 \| v - Q_h^i(v) \|_{L^{\infty}(\mathcal{P}_i)}\leq C h_{\mathcal{P}_i}^{2-n/2} |v|_{2,\mathcal{P}_i}\,. 
\end{equation}
\end{lmm}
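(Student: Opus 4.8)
The plan is to split the proof according to the three configurations allowed for $\partial\mathcal{P}_i\cap\partial\Omega$ by Assumption \ref{cond: plat} (empty, a single point, or a piece of one flat side containing an $(n-1)$-dimensional ball of radius $\ge c_2 h_{\mathcal{P}_i}$), the common backbone being the Bramble--Hilbert/Dupont--Scott machinery on the patch. Since $\mathcal{P}_i$ is star-shaped with respect to a ball of diameter $\rho_{\mathcal{P}_i}$ with $h_{\mathcal{P}_i}/\rho_{\mathcal{P}_i}\le c_1$, the averaged Taylor polynomial $\bar{Q}$ of degree $\le 1$ satisfies $|v-\bar{Q}|_{m,\mathcal{P}_i}\le C h_{\mathcal{P}_i}^{2-m}|v|_{2,\mathcal{P}_i}$ for $m=0,1$ with $C$ depending only on $c_1$ and $n$. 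The $L^\infty$ bound I would not read off directly, but obtain from a scaled Sobolev embedding $H^2(\mathcal{P}_i)\hookrightarrow L^\infty(\mathcal{P}_i)$ (legitimate since $n\le 3$), that is $\|w\|_{L^\infty(\mathcal{P}_i)}\le C\bigl(h_{\mathcal{P}_i}^{-n/2}\|w\|_{0,\mathcal{P}_i}+h_{\mathcal{P}_i}^{1-n/2}|w|_{1,\mathcal{P}_i}+h_{\mathcal{P}_i}^{2-n/2}|w|_{2,\mathcal{P}_i}\bigr)$, applied to $w=v-\bar{Q}$ and using $|w|_{2,\mathcal{P}_i}=|v|_{2,\mathcal{P}_i}$; each of the three terms then contributes $C h_{\mathcal{P}_i}^{2-n/2}|v|_{2,\mathcal{P}_i}$.

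If $\partial\mathcal{P}_i\cap\partial\Omega=\emptyset$ there is no constraint and I would simply take $Q_h^i(v)=\bar{Q}$. If the intersection is a single point $p_0$, then since $v\in H^2(\Omega)\cap H^1_0(\Omega)\hookrightarrow C^0(\overline\Omega)$ we have $v(p_0)=0$, and I would set $Q_h^i(v)=\bar{Q}-\bar{Q}(p_0)$, which vanishes at $p_0$. The correction is the constant $\bar{Q}(p_0)=\bar{Q}(p_0)-v(p_0)$, bounded by $\|v-\bar{Q}\|_{L^\infty(\mathcal{P}_i)}\le C h_{\mathcal{P}_i}^{2-n/2}|v|_{2,\mathcal{P}_i}$; subtracting it leaves the $H^1$-seminorm unchanged, multiplies the $L^\infty$ error by at most a constant, and adds to the $L^2$ error a term $|\bar{Q}(p_0)|\,|\mathcal{P}_i|^{1/2}\le C h_{\mathcal{P}_i}^{2-n/2}|v|_{2,\mathcal{P}_i}\,h_{\mathcal{P}_i}^{n/2}=C h_{\mathcal{P}_i}^{2}|v|_{2,\mathcal{P}_i}$, so all three estimates survive.

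The main case, and the genuinely new one, is when $\partial\mathcal{P}_i\cap\partial\Omega$ lies in one side of $\Omega$ and contains an $(n-1)$-ball of radius $\ge c_2 h_{\mathcal{P}_i}$. Let $\Gamma$ be the hyperplane carrying that side, $\nu$ its unit normal, and $\ell(x)=\nu\cdot(x-x_0)$ (with $x_0\in\Gamma$) the associated affine function, so $\Gamma=\{\ell=0\}$. Any polynomial of degree $\le 1$ vanishing on $\partial\mathcal{P}_i\cap\partial\Omega$ must, being affine and vanishing on an open subset of $\Gamma$, vanish on all of $\Gamma$, hence be of the form $c\ell$; I would therefore look for $Q_h^i(v)=c\ell$ and choose $c=\tfrac{1}{|\mathcal{P}_i|}\int_{\mathcal{P}_i}\nabla v\cdot\nu\,dx$, the mean normal derivative. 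Writing $w=v-c\ell$ and $\Gamma_0:=\partial\mathcal{P}_i\cap\partial\Omega\subset\Gamma$, I would decompose $\nabla w=(\partial_\nu v-c)\nu+\sum_j(\partial_{\tau_j}v)\tau_j$ into its normal and tangential parts. The normal part has zero mean over $\mathcal{P}_i$, so a Poincar\'e--Wirtinger inequality gives $\|\partial_\nu v-c\|_{0,\mathcal{P}_i}\le C h_{\mathcal{P}_i}\|\nabla\partial_\nu v\|_{0,\mathcal{P}_i}\le C h_{\mathcal{P}_i}|v|_{2,\mathcal{P}_i}$. For the tangential parts I would exploit that $v=0$ on $\Gamma_0$ forces $\partial_{\tau_j}v=0$ on $\Gamma_0$; a Friedrichs inequality for functions vanishing on $\Gamma_0$ then yields $\|\partial_{\tau_j}v\|_{0,\mathcal{P}_i}\le C h_{\mathcal{P}_i}\|\nabla\partial_{\tau_j}v\|_{0,\mathcal{P}_i}\le C h_{\mathcal{P}_i}|v|_{2,\mathcal{P}_i}$. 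Combining gives $|w|_{1,\mathcal{P}_i}\le C h_{\mathcal{P}_i}|v|_{2,\mathcal{P}_i}$; since moreover $w=v-c\ell$ itself vanishes on $\Gamma_0$ (both $v$ and $\ell$ do), a further Friedrichs inequality gives $\|w\|_{0,\mathcal{P}_i}\le C h_{\mathcal{P}_i}|w|_{1,\mathcal{P}_i}\le C h_{\mathcal{P}_i}^2|v|_{2,\mathcal{P}_i}$, and the $L^\infty$ bound follows from the scaled Sobolev embedding above exactly as before, using $|w|_{2,\mathcal{P}_i}=|v|_{2,\mathcal{P}_i}$.

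The step I expect to require the most care is not the algebra but the \emph{uniformity} of the Poincar\'e--Wirtinger and Friedrichs constants: they must depend only on the mesh regularity parameters $c_1$ and $c_2$, not on the individual patch $\mathcal{P}_i$. This is where the star-shapedness of $\mathcal{P}_i$ with respect to a ball of controlled relative diameter (through $c_1$) and the requirement that $\Gamma_0$ contain an $(n-1)$-ball of radius $\ge c_2 h_{\mathcal{P}_i}$ (a definite fraction of the boundary) enter decisively. I would justify the common $h_{\mathcal{P}_i}$-scaling by rescaling $\mathcal{P}_i$ to unit diameter and invoking a compactness argument over the resulting precompact family of admissible normalized shapes (equivalently, by the direct Dupont--Scott-type estimates on star-shaped domains), which also supplies the uniform constant in the scaled Sobolev inequality.
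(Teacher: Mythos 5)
Your proof is correct, and while your treatment of the interior case and the single-point case coincides with the paper's (averaged Taylor polynomial via Bramble--Hilbert, then subtraction of the constant $\bar{Q}(p_0)$ controlled by the $L^\infty$ error), your handling of the main case --- $\partial\mathcal{P}_i\cap\partial\Omega$ containing an $(n-1)$-ball of radius $\ge c_2 h_{\mathcal{P}_i}$ --- takes a genuinely different route. The paper keeps the averaged Taylor polynomial $Q^2v$ and corrects it by the affine function $c_h$ that coincides with $Q^2v$ on $\Gamma_0:=\partial\mathcal{P}_i\cap\Gamma$ and is constant in the normal direction; since $v=0$ on $\Gamma_0$, the key bounds are $\|c_h\|_{L^\infty(\mathcal{P}_i)}\le C\|Q^2v-v\|_{L^\infty(\Gamma_0)}\le Ch_{\mathcal{P}_i}^{2-n/2}|v|_{2,\mathcal{P}_i}$ together with an inverse inequality $\|\nabla c_h\|_{L^\infty(\mathcal{P}_i)}\le Ch_{\mathcal{P}_i}^{-1}\|c_h\|_{L^\infty(\Gamma_0)}$ for the $H^1$ estimate, so the whole case reduces to the already-cited Brenner--Scott estimates plus finite-dimensional inequalities on affine functions, and the geometric parameter $c_2$ enters only through these $L^\infty$/inverse bounds. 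You instead build the approximant directly as $c\ell$ with $c$ the mean normal derivative (after correctly observing that \emph{every} admissible affine approximant must have this form), and prove the $H^1$ bound by splitting the gradient error into a zero-mean normal part (Poincar\'e--Wirtinger) and tangential parts with vanishing trace on $\Gamma_0$ (Friedrichs), then the $L^2$ bound by another Friedrichs inequality and the $L^\infty$ bound by a scaled Sobolev embedding, which is legitimate for $n\le 3$. Your route is more structural and avoids pointwise reasoning in the $H^1$/$L^2$ estimates, but it shifts the burden onto the uniformity of the Poincar\'e--Wirtinger, Friedrichs, trace and Sobolev constants over the patch family --- a point you rightly flag; the bare compactness argument is somewhat delicate here (the patches are unions of up to $M$ simplices, some degenerate), and the safer justification is the direct one you mention in parentheses: Dupont--Scott estimates on domains star-shaped with respect to a ball of relative diameter $\ge 1/c_1$, combined with a scaled trace inequality on $\Gamma_0$ (whose measure $\gtrsim (c_2h_{\mathcal{P}_i})^{n-1}$ is exactly what makes the Friedrichs constant uniform). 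The paper's construction, by contrast, makes the uniformity of all constants transparent at the price of leaning entirely on the $L^\infty$ theory of the averaged Taylor polynomial.
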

\begin{proof}
  We consider first the case of the patch $\mathcal{P}_i $ lying completely
  inside $\Omega$. We take then $Q_h^i (v)$ on $\mathcal{P}_i$ as the Taylor
  polynomial $Q^2 v$, \textit{cf.} Definition (4.1.3) from
  {\cite{brenner2007mathematical}}, averaged over the ball of diameter
  $\rho_{\mathcal{P}_i}$ mentioned in Assumption \ref{cond: plat}. The
  estimates (\ref{eqlemma:IntpP}) for $Q_h^i (v) = Q^2 v$ are thus given by
  Proposition (4.3.2) and Bramble-Hilbert Lemma (4.3.8) from
  {\cite{brenner2007mathematical}}.
  
  We now turn to the case when the boundary $\partial \mathcal{P}_i$
  intersects $\partial \Omega$ in only one point, say $x$. The polynomial
  $Q_h^i (v)$ should vanish at $x$ so that we correct \ $Q^2 v$ by
  subtracting from it its value at point. We set thus $Q_h^i (v) = Q^2 v -
  c_h$ where $c_h = Q^2 v (x)$. Since $v (x) = 0$, we have by the above
  mentioned properties of $Q^2 v$
  \[ | c_h | = | Q^2 v (x) - v (x) | \leqslant \| Q^2 v - v \|_{L^{\infty}
     (\mathcal{P}_i)} \leqslant Ch^{2 - n / 2} | v |_{2, \mathcal{P}_i} \]
  which entails
  \[ \| Q^i_h v - v \|_{0, \mathcal{P}_i} \leqslant \| Q^2 v - v \|_{0,
     \mathcal{P}_i} + | c_h | | \mathcal{P}_i |^{1 / 2} \leqslant Ch^2 | v
     |_{2, \mathcal{P}_i} \]
  and
  \[ \| Q^i_h v - v \|_{L^{\infty} (\mathcal{P}_i)} \leqslant \| Q^2 v - v
     \|_{L^{\infty} (\mathcal{P}_i)} + | c_h | \leqslant Ch^{} | v |_{2,
     \mathcal{P}_i} \, . \]
  The $H^1$ semi-norm of the error is not affected by the constant $c_h$, so
  that the announced estimate for $| Q^i_h v - v |_{1, \mathcal{P}_i}$ is also
  valid.
  
  The last case to consider is when $\partial \mathcal{P}_i$ has a non-empty
  intersection with a side, say $\Gamma$, of $\partial \Omega$, which is not
  reduced to one point. We recall that \ $\partial \mathcal{P}_i \cap \Gamma$
  is assumed then to contain a ball of radius of order $h_{\mathcal{P}_i}$. We
  introduce the polynomial $c_h$ of degree $\leqslant 1$ that coincides with
  $Q^2 v$ on $\partial \mathcal{P}_i \cap \Gamma$ and does not vary in the
  direction perpendicular to $\Gamma$. Setting $Q_h^i (v) = Q^2 v - c_h$ we
  see immediately that $Q_h^i (v)$ vanishes on $\Gamma$. Moreover, thanks to
  our geometrical assumptions and the fact that $v$ vanishes on $\partial
  \mathcal{P}_i \cap \Gamma$,
  \[ \| c_h \|_{L^{\infty} (\mathcal{P}_i)} \leqslant C \| c_h \|_{L^{\infty}
     (\partial \mathcal{P}_i \cap \Gamma)} = C \| Q^2 v - v \|_{L^{\infty}
     (\partial \mathcal{P}_i \cap \Gamma)} \leqslant Ch_{\mathcal{P}_i}^{2 - n
     / 2} | v |_{2, \mathcal{P}_i} \, . \]
  We can thus prove the desired estimates for $\| Q^i_h v - v \|_{0,
  \mathcal{P}_i}$ and $\| Q^i_h v - v \|_{L^{\infty} (\mathcal{P}_i)}$ as in
  the previous case. Finally, by an inverse inequality,
  \[ \| \nabla c_h \|_{L^{\infty} (\mathcal{P}_i)} \leqslant
     \frac{C}{h_{\mathcal{P}_i}} \| c_h \|_{L^{\infty} (\partial \mathcal{P}_i
     \cap \Gamma)} \leqslant Ch_{\mathcal{P}_i}^{1 - n / 2} | v |_{2,
     \mathcal{P}_i} \]
  so that
  \[ | Q^i_h v - v |_{1, \mathcal{P}_i} \leqslant | Q^2 v - v |_{1,
     \mathcal{P}_i} + \| \nabla c_h \|_{L^{\infty} (\mathcal{P}_i)} |
     \mathcal{P}_i |^{1 / 2} \leqslant Ch | v |_{2, \mathcal{P}_i} \, . \]
\end{proof}

We also recall the usual interpolation error estimates on regular cells for the standard Lagrange interpolation operator ${\mathcal{I}}_h$ to the space of piecewise linear functions, \textit{cf.} \cite{ciarlet1978,brenner2007mathematical}.
\begin{lmm}\label{lemma:IntpK}
Under Assumption \ref{cond: plat}, we have on each mesh cell $K\in\mathcal{T}_h$ outside of patches $\mathcal{P}_i$
\begin{equation}
  | v - {\mathcal{I}}_h(v) |_{1,K}\leq C h |v|_{2,K}, 
 \quad
  \| v - {\mathcal{I}}_h(v) \|_{0,K} \le Ch^2 |v|_{2,K}, 
 \quad
 \| v - {\mathcal{I}}_h(v) \|_{L^{\infty}(K)}\leq C h^{2-n/2} |v|_{2,K}, 
\end{equation}
for any $v\in H^2(K)$.
\end{lmm}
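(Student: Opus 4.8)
The plan is to reduce everything to the classical affine-equivalence argument, since the cells $K$ lying outside the patches $\mathcal{P}_i$ are precisely those satisfying Ciarlet Condition \eqref{eq:cond ball}, i.e. $h_K/\rho_K\leq c_0$, and hence form a shape-regular family in the usual sense. First I would fix such a cell $K$ and write the affine map $F_K(\hat x)=B_K\hat x+b_K$ sending a fixed reference simplex $\hat K$ onto $K$. Shape-regularity yields the standard bounds $\|B_K\|\leq C h_K$, $\|B_K^{-1}\|\leq C\rho_K^{-1}\leq Ch_K^{-1}$, and $|\det B_K|\sim h_K^{n}$, where I use $h_K\leq h$ together with $h_K/\rho_K\leq c_0$.

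Next I would invoke the affine invariance of the Lagrange interpolant: setting $\hat v=v\circ F_K$, one has $\widehat{\mathcal{I}_h v}=\hat{\mathcal{I}}\hat v$, where $\hat{\mathcal{I}}$ denotes the $\mathbb{P}_1$-interpolation on $\hat K$. Since $\hat{\mathcal{I}}$ reproduces polynomials of degree $\le 1$ and is bounded on $H^2(\hat K)$ (the vertex values being controlled through the Sobolev embedding $H^2(\hat K)\hookrightarrow C(\hat K)$, valid for $n\le 3$), the Bramble–Hilbert lemma gives on the fixed reference element
$$|\hat v-\hat{\mathcal{I}}\hat v|_{m,\hat K}\leq C|\hat v|_{2,\hat K}\ (m=0,1),\qquad \|\hat v-\hat{\mathcal{I}}\hat v\|_{L^{\infty}(\hat K)}\leq C|\hat v|_{2,\hat K}.$$
Finally I would transport these estimates back to $K$ through the usual scaling inequalities for Sobolev semi-norms under affine maps, namely $|w|_{m,K}\leq C\|B_K^{-1}\|^{m}|\det B_K|^{1/2}|\hat w|_{m,\hat K}$ and $|\hat v|_{2,\hat K}\leq C\|B_K\|^{2}|\det B_K|^{-1/2}|v|_{2,K}$, together with the invariance $\|w\|_{L^{\infty}(K)}=\|\hat w\|_{L^{\infty}(\hat K)}$. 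Inserting the bounds on $B_K$ and $\det B_K$ (the determinant factors cancelling in the $H^1$ and $L^2$ cases) yields $|v-\mathcal{I}_h v|_{1,K}\leq C\|B_K^{-1}\|\|B_K\|^2|v|_{2,K}\leq C(h_K/\rho_K)h_K|v|_{2,K}\leq Ch|v|_{2,K}$, $\|v-\mathcal{I}_h v\|_{0,K}\leq C\|B_K\|^2|v|_{2,K}\leq Ch^2|v|_{2,K}$, and $\|v-\mathcal{I}_h v\|_{L^{\infty}(K)}\leq C\|B_K\|^2|\det B_K|^{-1/2}|v|_{2,K}\leq Ch^{2-n/2}|v|_{2,K}$, which are the three announced estimates.

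There is in fact no genuine obstacle here: the statement is the textbook interpolation estimate, and its only hypothesis — that $K$ lie outside the patches — serves exactly to guarantee the shape-regularity on which the reference-element argument rests. The sole point demanding a little care is the $L^{\infty}$ bound, where one must combine the Sobolev embedding on $\hat K$ with the non-cancelling scaling factor $\|B_K\|^{2}|\det B_K|^{-1/2}\sim h_K^{2-n/2}$; everything else is a verbatim reproduction of the classical proof, as found in \cite{ciarlet1978} or \cite{brenner2007mathematical}.
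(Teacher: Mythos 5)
Your proof is correct, and it matches the paper's intent exactly: the paper states this lemma without proof, merely recalling it as the classical interpolation estimate with a citation to \cite{ciarlet1978,brenner2007mathematical}, and your affine-equivalence argument (reference element, Bramble--Hilbert via $\mathbb{P}_1$-reproduction and the embedding $H^2(\hat K)\hookrightarrow C(\hat K)$ for $n\le 3$, then scaling with $\|B_K\|\le Ch_K$, $\|B_K^{-1}\|\le Ch_K^{-1}$, $|\det B_K|\sim h_K^n$) is precisely the textbook proof those references supply. You also correctly identified that the hypothesis ``$K$ outside the patches'' is what guarantees the Ciarlet condition $h_K/\rho_K\le c_0$ under Assumption \ref{cond: plat}, and your handling of the non-cancelling factor $\|B_K\|^2|\det B_K|^{-1/2}\sim h_K^{2-n/2}$ in the $L^\infty$ bound is the only subtle point and is done properly.
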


\begin{dfntn}\label{DefTildeIh}
For all $v \in H^2(\Omega)\cap H^1_0(\Omega)$,  let $\widetilde{\mathcal{I}}_h (v)$ be the function in $V_h$ that coincides with $Q_h^i(v)$ from Lemma \ref{lemma:IntpP} on each patch $\mathcal{P}_i$, and with the standard Lagrange interpolation ${\mathcal{I}}_hv$ on all the cells $K\in\mathcal{T}_h$ out of the extended patches $\widetilde{\mathcal{P}}_i$, \textit{i.e.} $\widetilde{\mathcal{I}}_h (v)(x)=v(x)$ at all the mesh nodes $x\in\bar\Omega\setminus\cup_{i\in\{1,\ldots,I\}}\widetilde{\mathcal{P}}_i$. 
\end{dfntn}

Note that $\widetilde{\mathcal{I}}_h (v)$ is uniquely defined also on the mesh cells from $\widetilde{\mathcal{P}}_i\setminus{\mathcal{P}}_i$, $i=1,\ldots,I$ although they are not explicitly mentioned above. Indeed, all the vertices of such cells are shared either with a patch ${\mathcal{P}}_i$ or with a regular cell from $\bar\Omega\setminus\cup_{i\in\{1,\ldots,I\}}\widetilde{\mathcal{P}}_i$.   Since the values of $\widetilde{\mathcal{I}}_h (v)$ are given at all these nodes by the definition above, the piecewise linear function   $\widetilde{\mathcal{I}}_h (v)$ is well defined everywhere.



We now prove the global interpolation estimates for the interpolation operator $\widetilde{\mathcal{I}}_h$.

\begin{prpstn}\label{prop:global int}
Under Assumption \ref{cond: plat}, we have for all $v\in H^2(\Omega)\cup H^1_0(\Omega)$
$$
| v - \widetilde{\mathcal{I}}_h(v) |_{1,\Omega}\leq C h |v|_{2,\Omega} ,
\quad
 \| v - \widetilde{\mathcal{I}}_h(v) \|_{0,\Omega}\leq C h^2 |v|_{2,\Omega}.
$$
\end{prpstn}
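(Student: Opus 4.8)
The plan is to partition $\Omega$ into three types of region and estimate the error of $\widetilde{\mathcal{I}}_h$ on each, before squaring and summing. The regions are: \emph{(i)} the cells $K$ lying outside all extended patches $\widetilde{\mathcal{P}}_i$, where $\widetilde{\mathcal{I}}_h v = \mathcal{I}_h v$ and Lemma \ref{lemma:IntpK} applies verbatim; \emph{(ii)} the patches $\mathcal{P}_i$, where $\widetilde{\mathcal{I}}_h v = Q_h^i(v)$ and the bounds are exactly those of Lemma \ref{lemma:IntpP} (recall $h_{\mathcal{P}_i}\leq Ch$, since each $\widetilde{\mathcal{P}}_i$ holds at most $M$ cells of diameter $\leq h$); and \emph{(iii)} the transition cells $K\in\widetilde{\mathcal{P}}_i\setminus\mathcal{P}_i$, which carry the essential difficulty. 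Note first that every cell in regions (i) and (iii) is regular in the Ciarlet sense, the degenerated cells being confined to the $\mathcal{P}_i$; in particular $\mathcal{I}_h v$ obeys Lemma \ref{lemma:IntpK} on the transition cells.

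On a transition cell $K$ I would compare $\widetilde{\mathcal{I}}_h v$ with the ordinary Lagrange interpolant and set $w_h:=\widetilde{\mathcal{I}}_h v-\mathcal{I}_h v\in\mathbb{P}_1(K)$. By Definition \ref{DefTildeIh}, the nodal values of $w_h$ vanish at every vertex of $K$ shared with a regular cell (there both interpolants equal $v$), while at a vertex $x$ shared with $\mathcal{P}_i$ one has $w_h(x)=Q_h^i(v)(x)-v(x)$, which the third estimate of Lemma \ref{lemma:IntpP} controls by $|w_h(x)|\leq Ch^{2-n/2}|v|_{2,\mathcal{P}_i}$. Since $w_h$ is affine, $\|w_h\|_{L^{\infty}(K)}\leq Ch^{2-n/2}|v|_{2,\widetilde{\mathcal{P}}_i}$, whence $\|w_h\|_{0,K}\leq Ch_K^{n/2}\|w_h\|_{L^{\infty}(K)}\leq Ch^2|v|_{2,\widetilde{\mathcal{P}}_i}$ using $h_K\leq h$. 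The cell $K$ being regular, an inverse inequality then gives $|w_h|_{1,K}\leq Ch_K^{-1}\|w_h\|_{0,K}\leq Ch_K^{n/2-1}h^{2-n/2}|v|_{2,\widetilde{\mathcal{P}}_i}\leq Ch|v|_{2,\widetilde{\mathcal{P}}_i}$, the last bound relying on $h_K\leq h$ and $n\geq 2$. A triangle inequality combining these with the Lemma \ref{lemma:IntpK} estimates for $v-\mathcal{I}_h v$ produces $\|v-\widetilde{\mathcal{I}}_h v\|_{0,K}\leq Ch^2|v|_{2,\widetilde{\mathcal{P}}_i}$ and $|v-\widetilde{\mathcal{I}}_h v|_{1,K}\leq Ch|v|_{2,\widetilde{\mathcal{P}}_i}$ on each transition cell.

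It remains to square, sum, and exploit Assumption \ref{cond: plat}. Over region (i), $\sum_K|v|_{2,K}^2\leq|v|_{2,\Omega}^2$; over region (ii), the mutual disjointness of the patches gives $\sum_i|v|_{2,\mathcal{P}_i}^2\leq|v|_{2,\Omega}^2$; over region (iii), each $\widetilde{\mathcal{P}}_i$ contributes at most $M$ transition cells, each controlled by $Ch^2|v|_{2,\widetilde{\mathcal{P}}_i}^2$ (resp. $Ch^4|v|_{2,\widetilde{\mathcal{P}}_i}^2$ in the $L^2$ case), so this region contributes at most $CMh^2\sum_i|v|_{2,\widetilde{\mathcal{P}}_i}^2\leq Ch^2|v|_{2,\Omega}^2$, again by disjointness of the $\widetilde{\mathcal{P}}_i$. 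Summing the three contributions yields the two asserted estimates.

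The main obstacle is region (iii): there $\widetilde{\mathcal{I}}_h v$ is a genuine hybrid, equal to $Q_h^i(v)$ at some nodes and to $v$ at others, so neither lemma applies directly. The decisive point is that the regularity of the transition cells lets the loss of one power of $h$ in the inverse inequality be compensated by the power gained from the $L^{\infty}$ estimate of $v-Q_h^i(v)$, so that the nonstandard nodal values are asymptotically harmless. One should verify that the exponent bookkeeping $h_K^{n/2-1}h^{2-n/2}\leq h$ holds uniformly for $n=2,3$ and does not deteriorate as $h_K\to 0$ — which is exactly what $n/2-1\geq 0$ guarantees.
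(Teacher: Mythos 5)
Your proof is correct and takes essentially the same route as the paper's: the same three-region decomposition, with the transition cells $K\subset\widetilde{\mathcal{P}}_i\setminus\mathcal{P}_i$ handled by controlling the affine discrepancy $w_h=\widetilde{\mathcal{I}}_h(v)-\mathcal{I}_h(v)$ through its nodal values (zero at vertices shared with regular cells, $O(h^{2-n/2})$ at vertices on $\partial\mathcal{P}_i$ via the $L^\infty$ bounds of Lemmas \ref{lemma:IntpP} and \ref{lemma:IntpK}), then trading the $L^\infty$ gain against an inverse inequality on these Ciarlet-regular cells before squaring and summing with the bound $M$ and the disjointness of the $\widetilde{\mathcal{P}}_i$. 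Your two-step passage through $\|w_h\|_{0,K}\le Ch_K^{n/2}\|w_h\|_{L^\infty(K)}$ and $|w_h|_{1,K}\le Ch_K^{-1}\|w_h\|_{0,K}$ is just the paper's one-line homogeneity bound $|r_h|_{1,K}\le Ch^{n/2-1}\|r_h\|_{L^\infty(K)}$ made explicit.
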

\begin{proof}
The contributions to the interpolation errors on the patches $\mathcal{P}_i$ and on the mesh cells outside the patches $\widetilde{\mathcal{P}}_i$ (where the interpolators $\widetilde{\mathcal{I}}_h$ and ${\mathcal{I}}_h$ coincide) are already covered by Lemmas \ref{lemma:IntpP} and \ref{lemma:IntpK}. It remains to bound the error on mesh cells in $\widetilde{\mathcal{P}}_i\setminus{\mathcal{P}}_i$.

Let $K\in\mathcal{T}_h$ and $K\subset\widetilde{\mathcal{P}}_i\setminus{\mathcal{P}}_i$.  By the triangle inequality and Lemma \ref{lemma:IntpK}, 
\begin{equation*}
 | v - \widetilde{\mathcal{I}}_h(v) |_{1,K}
 \leq  | v - \mathcal{I}_h(v) |_{1,K} + | \mathcal{I}_h(v) - \widetilde{\mathcal{I}}_h(v) |_{1,K}
 \leq  Ch| v |_{2,K} + | r_h |_{1,K},
\end{equation*}
where we have denoted $r_h:=\mathcal{I}_h(v) - \widetilde{\mathcal{I}}_h(v)$. By a homogeneity argument and the equivalence of norms on finite dimensional space, we see easily
$$
| r_h |_{1,K} \le Ch^{n/2-1}\| r_h \|_{L^\infty(K)}.
$$
Recalling that $r_h$ is a polynomial of degree $\le 1$ vanishing at the vertices of $K$ on $\partial\widetilde{\mathcal{P}}_i$, the other vertices belonging to $\partial{\mathcal{P}}_i$, we conclude 
$$
\| r_h \|_{L^\infty(K)} \le
\| r_h \|_{L^\infty(\partial K \cap \partial{\mathcal{P}}_i) } \le
\|v - \mathcal{I}_h(v)\|_{L^\infty(K)} + \|v - \widetilde{\mathcal{I}}_h(v)\|_{L^\infty(\mathcal{P}_i)}
\le
C h^{2-n/2} (|v|_{2,K} + |v|_{2,{\mathcal{P}}_i}).
$$
Putting the estimates above together yields
\begin{equation}\label{estK H1}
 | v - \widetilde{\mathcal{I}}_h(v) |_{1,K}
 \leq  C h (|v|_{2,K} + |v|_{2,{\mathcal{P}}_i}).
\end{equation}
Similarly,
\begin{equation}\label{estK L2}
 \| v - \widetilde{\mathcal{I}}_h(v) \|_{0,K}
 \leq  C h^2 (|v|_{2,K} + |v|_{2,{\mathcal{P}}_i}).
\end{equation}

Taking the square on both sides of \eqref{estK H1} and \eqref{estK L2}, summing them over all the mesh cells $K\subset\widetilde{\mathcal{P}}_i\setminus{\mathcal{P}}_i$, $i=1,\ldots,I$ (recall that the number of such cells on each patch is bounded by a predefined constant $M$), adding the estimates from lemma \ref{lemma:IntpP} on the patches $\mathcal{P}_i$ and those of Lemma \ref{lemma:IntpK} on the mesh cells outside the patches $\widetilde{\mathcal{P}}_i$ gives the desired result. 
\end{proof}


\subsection{Poor conditioning of the system matrix}\label{sec:poor}

In this section, we shall recall the well known fact that the presence of degenerated cells can induce an arbitrary large conditioning number of the associated finite element matrix. In the following proposition, we consider a particular example of a mesh satisfying Assumption \ref{cond: plat} and give an estimator for the conditioning number. This result should be contrasted with the ``normal'' conditioning number of order $1/h^2$ on a quasi-uniform mesh. 

\begin{prpstn}\label{prop:poor}
Suppose that the mesh $\mathcal{T}_h$ satisfies Assumption \ref{cond: plat} and contains a degenerate cell $K^{deg}$ such that 
\begin{equation}\label{eq:cond poor}
\rho_{K^{deg}}= \varepsilon,\quad
h_{K^{deg}}\geqslant C_1h.
\end{equation}
Then the conditioning number
$\boldsymbol{\kappa}(\boldsymbol{A}):=\|\boldsymbol{A}\|_2\|\boldsymbol{A}^{-1}\|_2$ 
of the matrix $\boldsymbol{A}$ associated to 
the bilinear form $a$ in $V_h$ satisfies
\begin{equation*}
\boldsymbol{\kappa}(\boldsymbol{A})
\geqslant \frac{C}{h\varepsilon}
\end{equation*}
for sufficiently small $h$, with $C$ depending only on $C_1$ and $\Omega$. Here, $\|\cdot\|_2$ stands for the matrix norm associated to the vector 2-norm.
\end{prpstn}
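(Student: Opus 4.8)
The matrix $\boldsymbol A$ is symmetric positive definite, so $\boldsymbol\kappa(\boldsymbol A)=\lambda_{\max}(\boldsymbol A)/\lambda_{\min}(\boldsymbol A)$, and both extreme eigenvalues are extrema of the Rayleigh quotient. Writing $\mathbf v$ for the vector of nodal values of $v_h\in V_h$ in the Lagrange basis, so that $\mathbf v^{T}\boldsymbol A\mathbf v=a(v_h,v_h)=|v_h|_{1,\Omega}^{2}$, the plan is to bound $\lambda_{\max}$ from below by testing with a basis function supported on $K^{deg}$, to bound $\lambda_{\min}$ from above by testing with a smooth, spread-out function, and then to take the ratio.

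For $\lambda_{\min}$, I would fix once and for all a nonzero $\phi\in C_c^\infty(\Omega)$ and set $w_h:=\widetilde{\mathcal I}_h(\phi)\in V_h$. By Proposition \ref{prop:global int}, $|w_h|_{1,\Omega}\le|\phi|_{1,\Omega}+Ch|\phi|_{2,\Omega}\le C$ and $\|w_h\|_{0,\Omega}\ge\|\phi\|_{0,\Omega}-Ch^2|\phi|_{2,\Omega}\ge c>0$ for $h$ small. It remains to relate $\|w_h\|_{0,\Omega}$ to the Euclidean norm $|\mathbf w|$ of its coefficients. Cell by cell, the local mass matrix satisfies $\lambda_{\max}(M_K)\le\operatorname{tr}(M_K)\le (n+1)|K|\le Ch^{n}$, whence $\|w_h\|_{0,K}^2\le Ch^{n}|\mathbf w_K|^2$; summing over cells and using the uniformly bounded number of cells meeting at a node gives $\|w_h\|_{0,\Omega}^2\le Ch^{n}|\mathbf w|^2$, so $|\mathbf w|^2\ge c h^{-n}$. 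Therefore $\lambda_{\min}(\boldsymbol A)\le |w_h|_{1,\Omega}^2/|\mathbf w|^2\le Ch^{n}$. The point is that the good interpolant $\widetilde{\mathcal I}_h$ lets me use a spread-out test function without the degenerate cell spoiling the $H^1$ bound.

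For $\lambda_{\max}$, let $a^\star$ be the vertex of $K^{deg}$ opposite its largest face $F^\star$, and let $\phi^\star$ be the associated hat function (coefficient vector of unit Euclidean norm). On $K^{deg}$, $\nabla\phi^\star$ is constant with $|\nabla\phi^\star|=1/H^\star$, where $H^\star=\operatorname{dist}(a^\star,F^\star)$ is the smallest height of $K^{deg}$. Since the inradius equals $\rho_{K^{deg}}/2=\varepsilon/2$ and $H^\star/(\varepsilon/2)=S/|F^\star|\le n+1$ (with $S$ the total surface area of $K^{deg}$), we get $H^\star\le (n+1)\varepsilon/2$. Hence, using $|K^{deg}|=\tfrac1n H^\star|F^\star|$,
$$
\lambda_{\max}(\boldsymbol A)\ge |\phi^\star|_{1,\Omega}^2\ge |\phi^\star|_{1,K^{deg}}^2=\frac{|K^{deg}|}{(H^\star)^2}=\frac{|F^\star|}{nH^\star}\ge \frac{2|F^\star|}{n(n+1)\varepsilon}.
$$
Combining with $\lambda_{\min}\le Ch^n$ yields $\boldsymbol\kappa(\boldsymbol A)\ge c\,|F^\star|/(h^{n}\varepsilon)$, so the announced bound $\boldsymbol\kappa(\boldsymbol A)\ge C/(h\varepsilon)$ follows once $|F^\star|\ge c\,h^{n-1}$.

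The main obstacle is precisely this last geometric estimate $|F^\star|\gtrsim h^{n-1}$ on the largest face. In dimension $n=2$ it is immediate, since $F^\star$ is the longest edge and $|F^\star|=h_{K^{deg}}\ge C_1 h=C_1 h^{n-1}$. In dimension $n=3$ it is where the flat character of the degenerate cell enters: for an isolated sliver adjacent to regular cells as in Figure \ref{fig:flatelement} (one height of order $\varepsilon$ and an opposite face of diameter of order $h$) one has $|F^\star|\ge c\,h^{2}$, which I would check directly from the coordinates of the specific mesh family under consideration. A genuinely needle-like degeneracy would only yield a weaker bound, which is why the estimate is used for the flat configuration of interest here. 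The remaining ingredients (the mass-matrix and interpolation bounds) are routine.
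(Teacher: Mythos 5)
Your proof is correct and follows essentially the same route as the paper: there too, $\|\boldsymbol{A}\|_2$ is bounded below by testing with the hat function at the vertex of $K^{deg}$ opposite its largest face, and $\|\boldsymbol{A}^{-1}\|_2$ by testing with $\widetilde{\mathcal{I}}_h\psi$ for a smooth $\psi$ together with the mass-matrix equivalence $\|v_h\|_{0,\Omega}^2\le Ch^n|\boldsymbol{v}|_2^2$, yielding $\|\boldsymbol{A}\|_2\gtrsim h^{n-1}/\varepsilon$ and $\|\boldsymbol{A}^{-1}\|_2\gtrsim h^{-n}$. The $3$D caveat you flag is real but applies equally to the paper, whose proof simply asserts that $|\nabla\phi_h|\sim 1/\varepsilon$ and $|K^{deg}|\sim\varepsilon h^{n-1}$ --- valid in $2$D and for flat (sliver) cells, which is the configuration intended, but not for needle-like degeneracies --- so your explicit estimates $H^\star\le(n+1)\varepsilon/2$ and $|F^\star|\gtrsim h^{n-1}$ merely make visible an assumption the paper leaves implicit.
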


\begin{proof}
Denote by $N$ the dimension of $V_h$.
Consider $\phi_h$ the basis function of $V_h$  equal to $1$ 
at the node of $K^{deg}$ opposite to the largest edge (face) of $K^{deg}$, vanishing at all the other nodes,
and $\boldsymbol{\phi}\in\mathbb{R}^N$ the vector representing $\phi_h$ in the basis of hat functions.
Then, denoting by $|\cdot|_2$ the vector 2-norm on $\mathbb{R}^N$ and by $(\cdot,\cdot)$ the associated inner product,
$$\|\boldsymbol{A}\|_2
=
\sup_{\boldsymbol{u}\in\mathbb{R}^N}\frac{(\boldsymbol{A}\boldsymbol{u},\boldsymbol{u})}{|\boldsymbol{u}|_2^2}
\geqslant (\boldsymbol{A}\boldsymbol{\phi},\boldsymbol{\phi})
=a(\phi_h,\phi_h)=|\phi_h|_{1,\Omega}^2\geqslant |\phi_h|_{1,K^{deg}}^2.
$$
By \eqref{eq:cond poor}, the gradient of $\phi_h$ is of order $1/\varepsilon$ on $K^{deg}$, and the area of $K^{deg}$ is of order $\varepsilon h^{n-1}$. Thus,
\begin{equation*}
\|\boldsymbol{A}\|_2 \geqslant
 |\phi_h|_{1,K^{deg}}^2\geqslant C\dfrac{h^{n-1}}{\varepsilon}.
\end{equation*}

Now take any $\psi\in H^2(\Omega)\cap H^1_0(\Omega)$, $\psi\not =0$
and let $\boldsymbol{\psi}\in\mathbb{R}^N$ be the vector associated to $\widetilde{\mathcal{I}}_h\psi$. Then
$$\|\boldsymbol{A}^{-1}\|_2
=\sup_{\boldsymbol{u}\in\mathbb{R}^N}
\frac{|\boldsymbol{u}|_2^2}{(\boldsymbol{A}\boldsymbol{u},\boldsymbol{u})}
\geqslant
\frac{|\boldsymbol{\psi}|_2^2}{(\boldsymbol{A}\boldsymbol{\psi},\boldsymbol{\psi})}
=
\frac{|\boldsymbol{\psi}|_2^2}{a(\widetilde{\mathcal{I}}_h\psi,\widetilde{\mathcal{I}}_h\psi)}
\geqslant
\frac{C}{h^n}\frac{\|\widetilde{\mathcal{I}}_h\psi\|_{0,\Omega}^2}{|\widetilde{\mathcal{I}}_h\psi|_{1,\Omega}^2}.
$$
We have used here the bound
$$\|v_h\|_{0,\Omega}^2\leqslant Ch^{n}|\boldsymbol{v}|^2
$$
valid for any $v_h\in V_h$ and the corresponding vector $\boldsymbol{v}$ since all the mesh cells are of diameter $\le h$.
Proposition \ref{prop:global int} implies
\begin{equation*}
\left\{\begin{array}{l}
\|\widetilde{\mathcal{I}}_h\psi\|_{0,\Omega}\geqslant \|\psi\|_{0,\Omega}-Ch^2|\psi|_{2,\Omega}\geqslant C\|\psi\|_{0,\Omega},\\
|\widetilde{\mathcal{I}}_h\psi|_{1,\Omega}\leqslant |\psi|_{1,\Omega}+Ch|\psi|_{2,\Omega}\leqslant C|\psi|_{1,\Omega}.
\end{array}
\right.
\end{equation*}
So that
$$\|\boldsymbol{A}^{-1}\|_2
\geqslant
\frac{C}{h^n}.
$$
This gives the desired result.
\end{proof}

\section{A well conditionned alternative finite element scheme}\label{sec:alter}

In this section, we build an alternative finite element method for which  the optimal convergence rates \eqref{eq:a priori} and \eqref{eq:a priori L2} hold true and the conditioning number of the finite element matrix is of order $C/h^2$ if all the mesh cells are of diameter $\sim h$. We start by the observation that such a method could be based on a subspace $\widetilde{V}_h\subset V_h$ which is the image of interpolation operator $\widetilde{\mathcal{I}}_h$, \textit{i.e.} 
$$\widetilde{V}_h:=\{v_h\in V_h:[\nabla v_h]_{|F}=0
\mbox{ for all }F\in\mathcal{F}_i,i\in\{1,...,I\}\} ,$$
where  $\mathcal{F}_i$ is the set of interior edges (faces) 
of the patch $\mathcal{P}_i$ and $[\cdot]_{|F}$ represents the jump on $F$. In view of our interpolation estimates, the problem of  finding $\widetilde{u}_h\in \widetilde{V}_h$ such that
\begin{equation*}
a(\widetilde{u}_h,\widetilde{v}_h)=l(\widetilde{v}_h)
\mbox{ for all }\widetilde{v}_h\in \widetilde{V}_h
\end{equation*}
would produce an approximate solution with optimal error. Moreover, it is easy to see that the matrix would be well-conditioned since the space $\widetilde{V}_h$ ignores the degenerated cells. Such a method is only of theoretical interest because one cannot easily construct a basis for $\widetilde{V}_h$ using available finite element libraries. In what follows, we use this problem rather as an inspiration in constructing an implementable finite element scheme.

In doing so, we shall impose further restrictions on the mesh:
\begin{ass}\label{cond: plat2} The mesh satisfies Assumption \ref{cond: plat}. Moreover, 
\begin{itemize}
\item The number of patches $I$ is bounded by some $I_{\max}$.
\item Each patch $\mathcal{P}_i$ contains a non-degenerated cell $K_i^{nd}$, \textit{i.e.}  such that  $h_{\mathcal{P}_i}/\rho_{K_i^{nd}}\leqslant c_0$.
\end{itemize}
\end{ass}
In what follows, the constants $C$ will be allowed to depend on the additional parameter in Assumption \ref{cond: plat2}, \textit{i.e.} $I_{\max}$.


We shall need the following modification of the previously defined interpolation operator $\widetilde{\mathcal{I}}_h$, which makes sense under Assumption \ref{cond: plat2}, \textit{cf.} also Fig. \ref{fig:flatelement}, and will be incorporated explicitly into our modified finite element scheme. 
\begin{dfntn}\label{DefHatIh}
For all $v \in H^2(\Omega)\cap H^1_0(\Omega)$,  let $\widehat {\mathcal{I}}_h (v)$ be the function in $V_h$ that coincides with with the standard Lagrange interpolation ${\mathcal{I}}_hv$ on all the cells $K\in\mathcal{T}_h$ out of the extended patches $\widetilde{\mathcal{P}}_i$, 
and is given on each patch ${\mathcal{P}}_i$, not touching the boundary $\partial\Omega$, by
\begin{equation}\label{IntpExt}
  \widehat{\mathcal{I}}_h (v) |_{\mathcal{P}_i} 
  := \mbox{Ext}\, \left ( \mathcal{I}_h (v) |_{K^{nd}_i} \right ),
\end{equation}
where $\mathcal{I}_h$ stands again for the standard Lagrange interpolation operator on $K_i^{nd}$, and Ext stands for the extension of a polynomial from $K^{nd}_i \subset \mathcal{P}_i$ to the whole $\mathcal{P}_i$ without changing the coefficients of the polynomial. If the patch $\mathcal{P}_i$ touches $\partial\Omega$, then $\widehat {\mathcal{I}}_h (v)$ is also based there on formula (\ref{IntpExt}), corrected as in Lemma \ref{lemma:IntpP}.
  
\end{dfntn}

\begin{rmrk}\label{rem:global int}
The new interpolation operator $\widehat {\mathcal{I}}_h (v)$ satisfies the same optimal estimates as that for the old operator $\widetilde {\mathcal{I}}_h (v)$ which are given in Proposition \ref{prop:global int}, the proof of which is based on Lemma \ref{lemma:IntpP}. 
To prove that Lemma \ref{lemma:IntpP} remains valid for $\widehat{\mathcal{I}}_h$, i.e. redefining in (\ref{eqlemma:IntpP}) the original $Q_h^i$ bye $Q_h^i:=\widehat{\mathcal{I}}_h (v) |_{\mathcal{P}_i}$ as in (\ref{IntpExt}), we refer to Theorem (4.4.4) and Corollary (4.4.7) from \cite{brenner2007mathematical}. Following their proofs, one can see  that the only thing to check is the boundedness of operator Ext in \eqref{IntpExt} as a linear map on the space of polynomials of degree $\le 1$ equipped with the norm of $L^\infty(K_i^{nd})$ to $L^\infty({\mathcal{P}_i})$. This, in turn, follows easily from our geometrical Assumptions \ref{cond: plat}, \ref{cond: plat2}.
\end{rmrk}

\subsection{An alternative scheme}

We denote by $a_{\omega}$ the restriction of $a$ on a subset $\omega$ of $\Omega$, and by $(\cdot,\cdot)_{\omega}$ the inner product in  $L^2(\omega)$.
Consider the bilinear form $a_h$ defined for all $u_h,~v_h\in V_h$ by
\begin{equation}\label{eq:bil GP}
 a_h (u_h, v_h) := a_{\Omega^{nd}_h} (u_h, v_h) 
 + \sum_i a_{\mathcal{P}_i}(\widehat{\mathcal{I}}_h u_h, \widehat{\mathcal{I}}_h v_h)
  + \sum_i  \frac{1}{h_{\mathcal{P}_i}^2}((\mbox{Id}- \widehat{\mathcal{I}}_h) u_h, (\mbox{Id}-
   \widehat{\mathcal{I}}_h) v_h)_{\mathcal{P}_i},
   \end{equation}
where $\Omega^{nd}_h:=\Omega\backslash(\cup_i\overline{\mathcal{P}_i})$ and the interpolation operator $\widehat{\mathcal{I}}_h$ is defined by \eqref{IntpExt}, \textit{i.e.} $u_h$ is not used directly inside the patches in the second term of $a_h$, but rather it is extended from a non-degenerate cell inside each patch. The third term in $a_h$ will serve, loosely speaking, to penalize the eventual non-alignment of the approximate solution $u_h$ with the optimal subspace $\widetilde{V}_h$. 

We now introduce the following method approximating System \eqref{cont prob}: find $u_h\in V_h$ such that
\begin{equation}\label{disc prob GP}
a_h(u_h,v_h)=l(v_h)\mbox{ for all }v_h\in V_h.
\end{equation}
The idea of using the polynomial extension from ``good'' to ``bad'' mesh cells in the scheme (\ref{disc prob GP}) is borrowed from \cite{HaslingerRenard}. We shall also see that the scheme can be recast in a form using the interior penalization on the mesh facets between ``good'' and ``bad'' cells, as in the ghost penalty method \cite{burmancras}. 


\subsection{\textit{A priori} estimate}



The approximation of System \eqref{cont prob} by \eqref{disc prob GP}
induces a quasi-optimal convergence rate:

\begin{thrm}[\textit{A priori} estimate]\label{theo:a priori gp}
Let $u\in V$ and $u_h\in V_h$ be the solutions 
to System \eqref{cont prob} and System \eqref{disc prob GP}, respectively.
 Then, under Assumption \ref{cond: plat2}, we have for any $\varepsilon>0$ if $n=2$, and for $\varepsilon=0$ if $n=3$, 
\begin{equation}\label{eq:a priori GP}
 |u-\Pi_hu_h|_{1,\Omega}:= |u-u_h|_{1,\Omega^{nd}_h} +\sum_i|u-\widehat{\mathcal{I}}_hu_h|_{1,\mathcal{P}_i} \leq C 
  h^{1-\varepsilon} | u |_{2,\Omega},
\end{equation}
where $\Pi_hu_h$ is equal to $u_h$ on $\Omega^{nd}_h$ and $\widehat{\mathcal{I}}_hu_h$ on $\mathcal{P}_i$.
Moreover, if $\Omega$ is convex,
\begin{equation*}
 \| u - u_h \|_{0,\Omega} \leq C
  h^{2-\varepsilon} | u |_{2,\Omega}.
\end{equation*}
In the case $n=2$, $C$ depends on $\varepsilon$ (in addition to its dependence on the mesh regularity).
\end{thrm}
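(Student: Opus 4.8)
The plan is to treat \eqref{disc prob GP} as a coercive, symmetric Galerkin method in the mesh-dependent energy norm
\[
\|v_h\|_h^2 := a_h(v_h,v_h) = |v_h|_{1,\Omega^{nd}_h}^2 + \sum_i |\widehat{\mathcal{I}}_h v_h|_{1,\mathcal{P}_i}^2 + \sum_i \frac{1}{h_{\mathcal{P}_i}^2}\big\|(\mathrm{Id}-\widehat{\mathcal{I}}_h)v_h\big\|_{0,\mathcal{P}_i}^2 ,
\]
and to run a Strang-type argument. Coercivity is built in ($a_h(v_h,v_h)=\|v_h\|_h^2$) and continuity is Cauchy--Schwarz, so the only structural point is that $\|\cdot\|_h$ is genuinely a norm on $V_h$, i.e. well-posedness of \eqref{disc prob GP}. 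This is a discrete Poincaré inequality: $\widehat{\mathcal{I}}_h$ is a projection of $V_h$ onto $\widetilde V_h$, and every cell of a patch is joined to $\partial\Omega$ through regular cells of $\Omega^{nd}_h$, on which $\|\cdot\|_h$ controls the full gradient.

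Next I would reduce the error to a consistency estimate. With $e_h:=u_h-\widehat{\mathcal{I}}_h u$ one has $\Pi_h\widehat{\mathcal{I}}_h u=\widehat{\mathcal{I}}_h u$, hence $\Pi_h u_h-\widehat{\mathcal{I}}_h u=\Pi_h e_h$ and, by the triangle inequality, $|u-\Pi_h u_h|_{1,\Omega}\le C\big(|u-\widehat{\mathcal{I}}_h u|_{1,\Omega^{nd}_h}+\sum_i|u-\widehat{\mathcal{I}}_h u|_{1,\mathcal{P}_i}\big)+\|e_h\|_h$, since $\|\Pi_h e_h\|$ in the broken $H^1$-seminorm is $\le\|e_h\|_h$. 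The first group is $O(h|u|_{2,\Omega})$ by Proposition \ref{prop:global int} and Remark \ref{rem:global int}, so it remains to bound $\|e_h\|_h$. Using the energy identity $\|e_h\|_h^2=a_h(e_h,e_h)=l(e_h)-a_h(\widehat{\mathcal{I}}_h u,e_h)=a(u,e_h)-a_h(\widehat{\mathcal{I}}_h u,e_h)$, the fact that the penalty term vanishes on $\widehat{\mathcal{I}}_h u\in\widetilde V_h$, and writing $\nabla_h$ for the broken gradient, I would record the consistency error, for every $w_h\in V_h$, in the clean form
\[
a(u,w_h)-a_h(\widehat{\mathcal{I}}_h u,w_h)=\int_\Omega\nabla\big(u-\widehat{\mathcal{I}}_h u\big)\cdot\nabla_h\Pi_h w_h\,dx+\sum_i\int_{\mathcal{P}_i}\nabla u\cdot\nabla\big((\mathrm{Id}-\widehat{\mathcal{I}}_h)w_h\big)\,dx .
\]
The first term is $\le Ch|u|_{2,\Omega}\|w_h\|_h$ by Proposition \ref{prop:global int} together with $\|\nabla_h\Pi_h w_h\|_{0,\Omega}\le\|w_h\|_h$.

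The heart of the matter is the patch term $R_i:=\int_{\mathcal{P}_i}\nabla u\cdot\nabla z_w$ with $z_w:=(\mathrm{Id}-\widehat{\mathcal{I}}_h)w_h$, which vanishes identically on the regular cell $K_i^{nd}$. Integrating by parts on $\mathcal{P}_i$ and using $-\Delta u=f$ yields $R_i=\int_{\mathcal{P}_i}f\,z_w+\int_{\partial\mathcal{P}_i}\partial_n u\,z_w$, where the boundary integrand is supported on the faces of $\partial\mathcal{P}_i$ \emph{not} carried by $K_i^{nd}$ (on $\partial\mathcal{P}_i\cap\partial\Omega$ one has $z_w=0$). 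The volume term is $\le Ch|u|_{2,\mathcal{P}_i}\|w_h\|_h$ through the penalty bound $\|z_w\|_{0,\mathcal{P}_i}\le h_{\mathcal{P}_i}\|w_h\|_h$. The boundary term is the crux, and the single observation making it uniform in the degeneracy of the cells is that $z_w$ can be controlled on $\partial\mathcal{P}_i$ \emph{without ever differentiating $w_h$ across a degenerate cell}: because every vertex of $\mathcal{P}_i$ is linked to $K_i^{nd}$ by a path of regular cells of $\widetilde{\mathcal{P}}_i$ of length $\le M$, telescoping the (constant) cell-gradients of $w_h$ along such a path, together with the control of $\nabla\widehat{\mathcal{I}}_h w_h$ by $\|w_h\|_h$, gives $\|z_w\|_{L^\infty(\mathcal{P}_i)}\le Ch^{1-n/2}\|w_h\|_h$, hence $\|z_w\|_{0,\partial\mathcal{P}_i}\le Ch^{1/2}\|w_h\|_h$, with constants independent of $c_0,c_1,c_2,M$ and of the flatness.

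I expect the genuine obstacle to be precisely this boundary term, and in particular the interplay between the Hölder pairing it forces and the Sobolev embedding of $H^2$. Since the factor multiplying $z_w$ is essentially $\nabla u$ restricted to the degenerate faces, one is led to estimate $\nabla u$ in $L^p$ on $(n-1)$-dimensional pieces of $\partial\mathcal{P}_i$ (via a trace inequality on the regular patch $\mathcal{P}_i$). In three dimensions the trace embedding $H^2(\mathcal{P}_i)\hookrightarrow W^{1,4}(\partial\mathcal{P}_i)$ is subcritical, so the estimate closes with the clean power $h$, explaining the case $\varepsilon=0$. In two dimensions only $H^2\hookrightarrow W^{1,p}(\partial\mathcal{P}_i)$ for $p<\infty$ is available, with embedding constant growing like $p^{1/2}$; optimizing over $p$ (equivalently paying a logarithmic factor) yields the loss $h^{1-\varepsilon}$ with $C$ depending on $\varepsilon$, exactly as stated. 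Finally, the $L^2$ estimate follows by Aubin--Nitsche duality: introducing the adjoint solution $\phi$ with data $u-u_h$, invoking $H^2$-regularity on the convex $\Omega$, and re-running the above consistency analysis for $\phi$ gains one power of $h$ (again up to $h^{-\varepsilon}$ when $n=2$), giving $\|u-u_h\|_{0,\Omega}\le Ch^{2-\varepsilon}|u|_{2,\Omega}$.
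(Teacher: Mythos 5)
Your proposal is essentially correct and shares the paper's skeleton: the energy norm $\interleave v_h\interleave=a_h(v_h,v_h)^{1/2}$, Galerkin orthogonality (Lemma \ref{lemma:galer}), the interpolation estimates for $\widehat{\mathcal{I}}_h$ via Remark \ref{rem:global int}, and Aubin--Nitsche duality for the $L^2$ bound; your consistency identity is exactly the paper's decomposition \eqref{est2}. Where you genuinely diverge is the critical patch term $\sum_i a_{\mathcal{P}_i}(u,(\mathrm{Id}-\widehat{\mathcal{I}}_h)e_h)$. You integrate by parts once on $\mathcal{P}_i$ and attack the resulting trace integral $\int_{\partial\mathcal{P}_i}\partial_n u\,z_w$ directly, via an $L^\infty$ bound on $z_w$ and $L^p$ trace estimates for $\nabla u$. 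The paper never sees a trace term: since $z_w$ vanishes on $\partial\widetilde{\mathcal{P}}_i$, a second integration by parts converts the boundary integral into volume terms over the annulus $\widetilde{\mathcal{P}}_i\setminus\mathcal{P}_i$, namely $(\Delta u,z_w)_{0,\widetilde{\mathcal{P}}_i\setminus\mathcal{P}_i}+a_{\widetilde{\mathcal{P}}_i\setminus\mathcal{P}_i}(u,z_w)$, handled by the Poincar\'e inequality \eqref{eq:poincare}, Lemma \ref{lemma:patch}, and Lemma \ref{NewLemma}. Both routes bottom out in the mechanism you correctly isolated as the crux: $\nabla u$ is small near the patches only through $\|\nabla u\|_{0,\omega}\lesssim|\omega|^{1/2-1/q}\|\nabla u\|_{L^q(\Omega)}$ combined with $H^1(\Omega)\hookrightarrow L^q(\Omega)$ ($q<\infty$ in 2D, whence $h^{1-\varepsilon}$ with $C_\varepsilon$; $q=6$ in 3D, whence $h$). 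Your trace-based packaging of this is a faithful variant of the paper's Lemma \ref{NewLemma}; the paper's annulus trick buys freedom from trace inequalities on the possibly degenerate boundaries $\partial\mathcal{P}_i$, while your route is more hands-on.

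Two caveats you should repair. First, be explicit that the scaled trace inequality alone does \emph{not} close: it leaves the term $\|\nabla u\|_{0,\mathcal{P}_i}$ with no positive power of $h$, so the volume H\"older/Sobolev smallness step is indispensable, not merely an optimization of the trace-embedding constant (your wording attributes the whole $\varepsilon$-loss to the latter). Second, your telescoping bound $\|z_w\|_{L^\infty(\partial\mathcal{P}_i)}\leqslant Ch^{1-n/2}\interleave w_h\interleave$ presupposes that every boundary vertex of $\mathcal{P}_i$ is joined to $K_i^{nd}$ through regular cells, which Assumption \ref{cond: plat2} does not literally guarantee (a patch may contain several degenerate cells insulating $K_i^{nd}$ from part of $\partial\mathcal{P}_i$); the paper's Lemma \ref{lemma:patch} avoids any path construction by a finite-dimensional norm-equivalence and homogeneity argument that needs only the bounded cell count $M$ and the fact that every cell of $\widetilde{\mathcal{P}}_i$ touches $\mathcal{P}_i$, and your $L^\infty$ bound can be proved the same way. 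Relatedly, your constants certainly do depend on $c_0,c_1,c_2,M$ --- what they are independent of is the flatness of the degenerate cells. Finally, in the duality step note, as the paper does, that the bound obtained is for $\|u-\widehat{\mathcal{I}}_hu_h\|_{0,\Omega}$, and one converts it to $\|u-u_h\|_{0,\Omega}$ using the penalty part of $\interleave u-u_h\interleave$ to control $\|u_h-\widehat{\mathcal{I}}_hu_h\|_{0,\mathcal{P}_i}$.
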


Before proving Theorem \ref{theo:a priori gp}, we first give some auxiliary results.
\begin{lmm}[Galerkin orthogonality]\label{lemma:galer}
Consider $u$ and $u_h$ the solution to Systems \eqref{cont prob} and \eqref{disc prob GP}. Then
$$a_{\Omega^{nd}_h} (u_h-u,v_h)
- \sum_i a_{\mathcal{P}_i} (u, v_h) 
+\sum_i a_{\mathcal{P}_i}(\widehat{\mathcal{I}}_h u_h, \widehat{\mathcal{I}}_hv_h)
+\sum_i\frac{1}{h_{\mathcal{P}_i}^2} ((\mbox{Id}-\widehat{\mathcal{I}}_h) u_h,(\mbox{Id}- \widehat{\mathcal{I}}_h)v_h)_{0,\mathcal{P}_i}=0,$$
for all $v_h\in V_h$.
\end{lmm}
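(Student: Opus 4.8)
The plan is to obtain the identity by the standard Galerkin-orthogonality manoeuvre: test both the continuous and the discrete problem against the same discrete function and subtract. Since $V_h\subset V=H^1_0(\Omega)$, every $v_h\in V_h$ is an admissible test function in the weak formulation \eqref{cont prob}, so that
$$a(u,v_h)=l(v_h).$$
The first step is to exploit the additivity of the bilinear form $a$ over the decomposition $\Omega=\Omega^{nd}_h\cup\bigcup_i\overline{\mathcal{P}_i}$, where the patches $\mathcal{P}_i$ are mutually disjoint by Assumption \ref{cond: plat} and $\Omega^{nd}_h=\Omega\setminus(\cup_i\overline{\mathcal{P}_i})$. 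This gives
$$l(v_h)=a(u,v_h)=a_{\Omega^{nd}_h}(u,v_h)+\sum_i a_{\mathcal{P}_i}(u,v_h),$$
expressing $l(v_h)$ entirely in terms of the exact solution $u$ split over the regular part and the patches.

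Next I would insert the definition \eqref{eq:bil GP} of $a_h$ into the discrete equation \eqref{disc prob GP}, namely $a_h(u_h,v_h)=l(v_h)$, and replace the right-hand side by the expression for $l(v_h)$ just derived. The regular-part terms then combine, by bilinearity, as $a_{\Omega^{nd}_h}(u_h,v_h)-a_{\Omega^{nd}_h}(u,v_h)=a_{\Omega^{nd}_h}(u_h-u,v_h)$, while the contribution $-\sum_i a_{\mathcal{P}_i}(u,v_h)$ coming from the continuous problem, the extended patch term $\sum_i a_{\mathcal{P}_i}(\widehat{\mathcal{I}}_h u_h,\widehat{\mathcal{I}}_h v_h)$, and the penalization term $\sum_i h_{\mathcal{P}_i}^{-2}((\mbox{Id}-\widehat{\mathcal{I}}_h)u_h,(\mbox{Id}-\widehat{\mathcal{I}}_h)v_h)_{0,\mathcal{P}_i}$ carry over unchanged. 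Collecting all terms on one side yields exactly the stated relation.

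The argument is entirely elementary, being just the usual Galerkin orthogonality adapted to the nonstandard form $a_h$, so I do not anticipate any genuine obstacle. The only two points that deserve a moment of care are the admissibility of $v_h$ as a test function in the continuous problem, which rests solely on the inclusion $V_h\subset V$, and the additivity of $a$ over the disjoint splitting of $\Omega$ into $\Omega^{nd}_h$ and the patches $\mathcal{P}_i$; both are immediate from the definitions and from the disjointness guaranteed by Assumption \ref{cond: plat}.
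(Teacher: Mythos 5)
Your proof is correct and coincides with the paper's intent: the paper simply declares the lemma ``immediate,'' and the argument it has in mind is precisely yours, namely subtracting $a(u,v_h)=l(v_h)$ (valid since $V_h\subset V$) from $a_h(u_h,v_h)=l(v_h)$ after splitting $a$ over the disjoint decomposition $\Omega=\Omega^{nd}_h\cup\bigcup_i\overline{\mathcal{P}_i}$. Nothing is missing; your two points of care (admissibility of $v_h$ and additivity of $a$ over the splitting) are exactly the right ones.
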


The proof of Lemma \ref{lemma:galer} is immediate.

We shall need the norm $\interleave\cdot\interleave$ defined for all $v_h\in V_h$ by 
$$\interleave v_h\interleave:=a_h(v_h,v_h)^{1/2}
=\left(
|v_h|^2_{1,\Omega^{nd}_h} 
 + \sum_i |\widehat{\mathcal{I}}_h v_h|^2_{1,\mathcal{P}_i}
  + \sum_i\frac{1}{h_{\mathcal{P}_i}^2} \|v_h- \widehat{\mathcal{I}}_h v_h \|^2_{0,\mathcal{P}_i}
\right)^\frac{1}{2}.$$
Note for the future use that this norm is also well defined on $V\cap H^2(\Omega)$.

\begin{lmm}\label{lemma:patch}
Under Assumption \ref{cond: plat2},
for all $v_h\in V_h$, it holds
\begin{equation*}
\sum_i|v_h-\widehat{\mathcal{I}}_hv_h|_{1,\widetilde{\mathcal{P}}_i\backslash\mathcal{P}_i}^2
\leqslant C \interleave v_h \interleave^2.
\end{equation*}
\end{lmm}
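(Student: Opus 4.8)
The plan is to estimate, on each cell $K\subset\widetilde{\mathcal{P}}_i\setminus\mathcal{P}_i$, the $H^1$ seminorm of $w_h:=v_h-\widehat{\mathcal{I}}_hv_h$ by its values at the vertices of $K$, and then to recognize that these vertex values are controlled by the three terms composing $\interleave v_h\interleave^2$. The starting observation is that $w_h$ is piecewise linear, so by a homogeneity/scaling argument together with the bounded chunkiness of the cells in $\widetilde{\mathcal{P}}_i$ (a consequence of Assumption \ref{cond: plat2}, since these are regular cells of diameter comparable to $h_{\mathcal{P}_i}$), we have
\[
|w_h|_{1,K}^2\leqslant C h_{\mathcal{P}_i}^{n-2}\max_{x\in\mathcal{V}(K)}|w_h(x)|^2,
\]
where $\mathcal{V}(K)$ denotes the set of vertices of $K$.

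Next I would classify the vertices of such a cell $K$. By the very definition of $\widehat{\mathcal{I}}_h$ (Definition \ref{DefHatIh}) and the discussion following Definition \ref{DefTildeIh}, every vertex $x$ of a cell in $\widetilde{\mathcal{P}}_i\setminus\mathcal{P}_i$ either lies in $\bar\Omega\setminus\cup_j\widetilde{\mathcal{P}}_j$, where $\widehat{\mathcal{I}}_hv_h=v_h$ so that $w_h(x)=0$, or lies on $\partial\mathcal{P}_i$. In the first case the vertex contributes nothing. In the second case, since $x\in\overline{\mathcal{P}_i}$ and $w_h$ is continuous, $|w_h(x)|\leqslant\|w_h\|_{L^\infty(\mathcal{P}_i)}$. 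Hence the whole sum reduces to controlling $\|v_h-\widehat{\mathcal{I}}_hv_h\|_{L^\infty(\mathcal{P}_i)}$.

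The remaining and central step is therefore to bound $\|v_h-\widehat{\mathcal{I}}_hv_h\|_{L^\infty(\mathcal{P}_i)}$ by the penalty term $\tfrac{1}{h_{\mathcal{P}_i}^2}\|v_h-\widehat{\mathcal{I}}_hv_h\|_{0,\mathcal{P}_i}^2$ appearing in $\interleave v_h\interleave^2$. Here one must pass from the $L^\infty$ to the $L^2$ norm on the patch. The natural tool is an inverse estimate, but it has to be applied with care: $v_h-\widehat{\mathcal{I}}_hv_h$ is piecewise linear on the regular cells constituting $\mathcal{P}_i$ (of which there are at most $M$), so on each such cell the finite-dimensional norm equivalence gives $\|v_h-\widehat{\mathcal{I}}_hv_h\|_{L^\infty(\mathcal{P}_i)}\leqslant C h_{\mathcal{P}_i}^{-n/2}\|v_h-\widehat{\mathcal{I}}_hv_h\|_{0,\mathcal{P}_i}$, using again the chunkiness of these regular cells and $\widehat{\mathcal{I}}_hv_h|_{\mathcal{P}_i}\in\mathbb{P}_1(\mathcal{P}_i)$. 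Combining the three displayed inequalities yields, on each cell $K\subset\widetilde{\mathcal{P}}_i\setminus\mathcal{P}_i$,
\[
|v_h-\widehat{\mathcal{I}}_hv_h|_{1,K}^2\leqslant C h_{\mathcal{P}_i}^{n-2}\,h_{\mathcal{P}_i}^{-n}\|v_h-\widehat{\mathcal{I}}_hv_h\|_{0,\mathcal{P}_i}^2
= C\,\frac{1}{h_{\mathcal{P}_i}^2}\|v_h-\widehat{\mathcal{I}}_hv_h\|_{0,\mathcal{P}_i}^2.
\]
Summing over the (at most $M$) cells of $\widetilde{\mathcal{P}}_i\setminus\mathcal{P}_i$ and over $i$, and noting that the right-hand side is exactly the third term in $\interleave v_h\interleave^2$, gives the claim.

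I expect the main obstacle to be the bookkeeping of the vertex classification — specifically justifying that a cell in $\widetilde{\mathcal{P}}_i\setminus\mathcal{P}_i$ has all of its vertices either on $\partial\mathcal{P}_i$ (hence inside $\overline{\mathcal{P}_i}$, where the $L^\infty$ bound applies) or outside $\cup_j\widetilde{\mathcal{P}}_j$ (hence where $w_h$ vanishes). This is precisely the separation property guaranteed by the disjointness of the extended patches $\widetilde{\mathcal{P}}_i$ in Assumption \ref{cond: plat}, but making the geometric argument airtight, including that $w_h$ is genuinely a single polynomial on each regular cell so that the norm equivalence is legitimate, is where the care is needed. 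The scaling powers of $h_{\mathcal{P}_i}$ conspire to cancel neatly, which is the structural reason the $1/h_{\mathcal{P}_i}^2$ weight in the penalty term is the right one.
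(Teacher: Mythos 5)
Your reduction of the whole estimate to the penalty term alone cannot work, and the step where it breaks is the inverse estimate $\|v_h-\widehat{\mathcal{I}}_hv_h\|_{L^\infty(\mathcal{P}_i)}\leqslant Ch_{\mathcal{P}_i}^{-n/2}\|v_h-\widehat{\mathcal{I}}_hv_h\|_{0,\mathcal{P}_i}$. You justify it by ``the chunkiness of these regular cells'', but the patch $\mathcal{P}_i$ is precisely where the degenerated cells live (Assumption \ref{cond: plat}); while $\widehat{\mathcal{I}}_hv_h|_{\mathcal{P}_i}$ is indeed a single polynomial on a chunky patch, $v_h$ is only piecewise linear there, and on a simplex $K$ the shape-independent form of the $L^\infty$--$L^2$ equivalence for $\mathbb{P}_1$ carries the constant $|K|^{-1/2}$, which is much larger than $h_{\mathcal{P}_i}^{-n/2}$ on a degenerate cell with $|K|\ll h_{\mathcal{P}_i}^n$. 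Concretely, take the two-cell patch of Fig. \ref{fig:patch} with height $h_i\ll h$ of $K_i^{deg}$ over $F_i$, and let $v_h$ be $a$ times the hat function of the apex node of $K_i^{deg}$. Then $v_h|_{K_i^{nd}}=0$, so $\widehat{\mathcal{I}}_hv_h\equiv 0$ on $\mathcal{P}_i$ and on the transition cells; hence $\|v_h-\widehat{\mathcal{I}}_hv_h\|_{L^\infty(\mathcal{P}_i)}=a$ while $h^{-n/2}\|v_h-\widehat{\mathcal{I}}_hv_h\|_{0,\mathcal{P}_i}\sim a\,(h_i/h)^{1/2}$, so your inequality fails by the unbounded factor $(h/h_i)^{1/2}$. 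The same example shows the strategy is irreparable, not just the lemma you invoke: the left-hand side of the statement is $\sim a^2h^{n-2}$ (the hat function's energy on the umbrella cells in $\widetilde{\mathcal{P}}_i\setminus\mathcal{P}_i$), whereas the penalty term is $\sim h_{\mathcal{P}_i}^{-2}\,a^2h^{n-1}h_i\sim a^2h^{n-3}h_i$; bounding the former by the latter would force $h\leqslant Ch_i$, which is exactly what fails on a damaged mesh (the paper's numerics even take $h_i\sim h^2$).

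The lemma is nevertheless true because $\interleave v_h\interleave^2$ contains two further terms that your argument discards: in the example above it is $|v_h|^2_{1,\Omega^{nd}_h}$, which contains the transition cells, that controls the left-hand side. Accordingly, the paper proves the stronger per-patch inequality \eqref{eq:lemma patch}, keeping all three local quantities $|v_h|^2_{1,\widetilde{\mathcal{P}}_i\setminus\mathcal{P}_i}$, $|\widehat{\mathcal{I}}_hv_h|^2_{1,\mathcal{P}_i}$ and $h_{\mathcal{P}_i}^{-2}\|(\mbox{Id}-\widehat{\mathcal{I}}_h)v_h\|^2_{0,\mathcal{P}_i}$ on the right, and gets the constant from homogeneity together with an equivalence-of-norms (kernel) argument over the finitely parameterized patch geometries: if the right-hand side vanishes, then $v_h$ and $\widehat{\mathcal{I}}_hv_h$ are matching constants, so the left-hand side vanishes as well. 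Your step 1 (bounding $|v_h-\widehat{\mathcal{I}}_hv_h|_{1,K}$ on a shape-regular transition cell by its vertex values, which vanish at nodes shared with cells outside the extended patches and are controlled through $\partial\mathcal{P}_i$ otherwise) is sound and could be retained, but the vertex values on $\partial\mathcal{P}_i$ must then be estimated using the two gradient terms as well, not the $L^2$ penalty alone.
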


\begin{proof}  
It suffices to prove for each patch
\begin{equation}\label{eq:lemma patch}
|v_h-\widehat{\mathcal{I}}_hv_h|_{1,\widetilde{\mathcal{P}}_i\backslash\mathcal{P}_i}^2
\leqslant C\left(
|v_h|_{1,\widetilde{\mathcal{P}}_i\backslash\mathcal{P}_i}^2
+|\widehat{\mathcal{I}}_hv_h|_{1,\mathcal{P}_i}^2
+\frac{1}{h_{\mathcal{P}_i}^2}\|(\mbox{Id}-\widehat{\mathcal{I}}_h)v_h\|_{0,\mathcal{P}_i}^2
\right).
\end{equation}
Using the fact that the number of cells in each $\widetilde{\mathcal{P}}_i$ is bounded by a constant $M$
(see Assumption \ref{cond: plat}),
 the functional space involved in \eqref{eq:lemma patch} is of finite dimension and the geometry of the patch $\widetilde{\mathcal{P}}_i\backslash\mathcal{P}_i$ is governed by a finite number of parameters, the existence of the constant $C$ will follow from maximization of the ratio of the left-hand side to the right-hand side over all polynomials and all the acceptable geometries and from a homogeneity argument.
We need only to verify that if the right-hand side  vanishes, \textit{i.e.}
$$|v_h|_{1,\widetilde{\mathcal{P}}_i\backslash\mathcal{P}_i}^2
+|\widehat{\mathcal{I}}_hv_h|_{1,\mathcal{P}_i}^2
+\frac{1}{h_{\mathcal{P}_i}^2}\|(\mbox{Id}-\widehat{\mathcal{I}}_h)v_h\|_{0,\mathcal{P}_i}^2=0$$
on a patch $\mathcal{P}_i$ and for some function $v_h\in V_h$, then  the left-hand side vanishes as well, \textit{i.e.} $v_h-\widehat{\mathcal{I}}_hv_h$ {is constant} on $\widetilde{\mathcal{P}}_i\backslash\mathcal{P}_i$.
Since $|v_h|_{1,\widetilde{\mathcal{P}}_i\backslash\mathcal{P}_i}=0$ and $|\widehat{\mathcal{I}}_hv_h|_{1,\mathcal{P}_i}=0$
$$
v_h=C_i\mbox{ in }\widetilde{\mathcal{P}}_i\backslash\mathcal{P}_i,
\quad
\widehat{\mathcal{I}}_hv_h=D_i\mbox{ in }\mathcal{P}_i
$$
with some constants $C_i$ and $D_i$.
Since $\|(\mbox{Id}-\widehat{\mathcal{I}}_h)v_h\|_{0,\mathcal{P}_i}=0$, we deduce that 
$$v_h=\widehat{\mathcal{I}}_hv_h=C_i=D_i\mbox{ in }\mathcal{P}_i$$
so that $(\mbox{Id}-\widehat{\mathcal{I}}_h)v_h=0$ on $\partial\mathcal{P}_i$. This entails 
$(\mbox{Id}-\widehat{\mathcal{I}}_h)v_h=0$ in $\widetilde{\mathcal{P}}_i\backslash\mathcal{P}_i$ by construction of $\widehat{\mathcal{I}}_h$.
\end{proof}

\begin{lmm}\label{NewLemma} 
Under Assumption \ref{cond: plat2},
 {we have for any $\varepsilon>0$ if $n=2$, and for $\varepsilon=0$ if $n=3$,}
\begin{equation*}
\left( \sum_i |u|_{1,\widetilde{\mathcal{P}}_i \setminus \mathcal{P}_i}^2 \right)^{\frac{1}{2}}  \leqslant C_{\varepsilon} h^{1 - \varepsilon}\| u \|_{2, \Omega} 
\end{equation*}
for all $u\in H^2(\Omega)$.
\end{lmm}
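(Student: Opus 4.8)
The plan is to recognize that the left-hand side is nothing but the squared $H^1$-seminorm of $u$ over the union $E:=\bigcup_{i=1}^I(\widetilde{\mathcal{P}}_i\setminus\mathcal{P}_i)$, and then to exploit the fact that $E$ has small Lebesgue measure. Indeed, every mesh cell has diameter $\le h$ and is therefore contained in a ball of radius $h$, so its measure is at most $C_nh^n$. Since each extended patch $\widetilde{\mathcal{P}}_i$ contains at most $M$ cells (Assumption \ref{cond: plat}) and there are at most $I_{\max}$ patches (Assumption \ref{cond: plat2}), I obtain $|E|\le I_{\max}\,M\,C_nh^n\le Ch^n$. Note that no mesh regularity is needed here: the measure bound uses only the diameter bound and the two counting hypotheses, which is precisely where Assumption \ref{cond: plat2} enters.

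First I would reduce the claim to estimating $\int_E|\nabla u|^2\,dx$. Since $u\in H^2(\Omega)$, the gradient $\nabla u$ belongs to $H^1(\Omega)^n$ with $\|\nabla u\|_{1,\Omega}\le\|u\|_{2,\Omega}$, so by the Sobolev embedding on the bounded Lipschitz (polygonal/polyhedral) domain $\Omega$ it is integrable to a higher power. I would then apply H\"older's inequality to trade this integrability against the smallness of $|E|$: for an exponent $p>2$,
\[
\int_E|\nabla u|^2\,dx\le\Big(\int_E|\nabla u|^p\,dx\Big)^{2/p}|E|^{1-2/p}\le\|\nabla u\|_{L^p(\Omega)}^2\,|E|^{1-2/p}\le C\|u\|_{2,\Omega}^2\,|E|^{1-2/p}.
\]
In dimension $n=3$ the critical embedding $H^1(\Omega)\hookrightarrow L^6(\Omega)$ lets me take $p=6$; with $|E|\le Ch^3$ this gives $|E|^{1-2/6}=|E|^{2/3}\le Ch^2$ and hence $\int_E|\nabla u|^2\,dx\le Ch^2\|u\|_{2,\Omega}^2$, the claim with $\varepsilon=0$. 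In dimension $n=2$ one has $H^1(\Omega)\hookrightarrow L^p(\Omega)$ for every finite $p$, with embedding constant $C_p$ blowing up as $p\to\infty$; choosing $p=2/\varepsilon$ and using $|E|\le Ch^2$ yields $|E|^{1-2/p}\le Ch^{2-4/p}=Ch^{2-2\varepsilon}$, so that $\int_E|\nabla u|^2\,dx\le C_\varepsilon h^{2-2\varepsilon}\|u\|_{2,\Omega}^2$. Taking square roots delivers the stated estimate in both cases.

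The crux of the argument, and the reason for the loss of $\varepsilon$ in two dimensions, is the failure of the critical Sobolev embedding $H^1\hookrightarrow L^\infty$ when $n=2$. Were $\nabla u$ bounded one could set $p=\infty$ and conclude $\int_E|\nabla u|^2\le\|\nabla u\|_{L^\infty}^2|E|\le Ch^2$ with no loss; since this is unavailable for $H^2$ data in 2D, one must settle for $L^p$ with $p$ large but finite, and the constant $C_\varepsilon=C_{2/\varepsilon}$ then inevitably depends on $\varepsilon$, consistent with the statement. In 3D the embedding $H^1\hookrightarrow L^6$ is exactly strong enough to absorb the power of $h$ produced by $|E|^{2/3}$, so $\varepsilon=0$ is admissible. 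Beyond these standard inequalities, the only points requiring care are the measure bound $|E|\le Ch^n$ and the uniformity of the Sobolev constant on the fixed domain $\Omega$; I do not expect either to present a genuine difficulty.
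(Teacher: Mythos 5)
Your proof is correct, but it takes a genuinely different and more streamlined route than the paper's. You bound the measure of the union $E=\bigcup_i(\widetilde{\mathcal{P}}_i\setminus\mathcal{P}_i)$ by $Ch^n$ (using the diameter bound on cells, the bound $M$ on cells per patch, $I\leqslant I_{\max}$, and the mutual disjointness of the $\widetilde{\mathcal{P}}_i$ to identify the sum on the left-hand side with $\int_E|\nabla u|^2$), and then apply H\"older's inequality directly to $\nabla u$ on $E$, concluding with the Sobolev embeddings $H^1(\Omega)\hookrightarrow L^p(\Omega)$ ($p<\infty$ if $n=2$, $p=6$ if $n=3$); your exponent bookkeeping is right ($|E|^{1-2/p}\leqslant Ch^{2-2\varepsilon}$ with $\varepsilon=2/p$ when $n=2$, and $|E|^{2/3}\leqslant Ch^2$ when $n=3$). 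The paper instead works patch by patch: it splits $\nabla u$ on each $\widetilde{\mathcal{P}}_i\setminus\mathcal{P}_i$ into its mean $\overline{\nabla u}^i$ plus a fluctuation, controls the fluctuation by a Poincar\'e inequality at the optimal rate $Ch|u|_{2,\widetilde{\mathcal{P}}_i\setminus\mathcal{P}_i}$, applies H\"older and the same Sobolev embeddings only to the mean, and then sums over patches via a discrete H\"older inequality. What your version buys is brevity and the avoidance of the Poincar\'e inequality on the annular regions $\widetilde{\mathcal{P}}_i\setminus\mathcal{P}_i$, whose constant would otherwise need to be uniform over the admissible (a priori rather irregular) geometries; what the paper's decomposition buys is the structural information that the $\varepsilon$-loss in 2D arises only from the mean of $\nabla u$, the oscillatory part being of optimal order $h$. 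Both arguments share the same crux --- trading the integrability of $\nabla u$ against the $O(h^n)$ measure of the exceptional set --- and your diagnosis that the 2D loss stems from the failure of the critical embedding $H^1\hookrightarrow L^\infty$ matches the paper's mechanism exactly.
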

\begin{proof}
  Let $\overline{\nabla u}^i$ denote the average of $\nabla u$ on
  $\widetilde{\mathcal{P}}_i \setminus \mathcal{P}_i$, \textit{i.e.}
  \[ \overline{\nabla u}^i := \frac{1}{| \widetilde{\mathcal{P}}_i \setminus
     \mathcal{P}_i |} \int_{\widetilde{\mathcal{P}}_i \setminus \mathcal{P}_i}
     \nabla u. \]
  Then, by Poincar\'e inequality, it holds
  \[ |u|_{1, \widetilde{\mathcal{P}}_i \setminus \mathcal{P}_i} \leqslant \|
     \nabla u - \overline{\nabla u}^i \|_{0, \widetilde{\mathcal{P}}_i
     \setminus \mathcal{P}_i} + \| \overline{\nabla u}^i \|_{0,
     \widetilde{\mathcal{P}}_i \setminus \mathcal{P}_i} \leqslant Ch | u |_{2,
     \widetilde{\mathcal{P}}_i \setminus \mathcal{P}_i} + \| \overline{\nabla
     u}^i \|_{0, \widetilde{\mathcal{P}}_i \setminus \mathcal{P}_i}. \]

We now consider separately the cases $n = 2$ and $n = 3$. If $n = 2$, using
  H\"{o}lder inequality with exponents $q > 2$ and $\frac{q}{q - 1}$ and the
  assumption that $| \widetilde{\mathcal{P}}_i \setminus \mathcal{P}_i |$ is
  of order $h^2$, we have
  \[ \| \overline{\nabla u}^i \|_{0, \widetilde{\mathcal{P}}_i \setminus
     \mathcal{P}_i} = \frac{1}{| \widetilde{\mathcal{P}}_i \setminus
     \mathcal{P}_i |^{\frac{1}{2}}} \left| \int_{\widetilde{\mathcal{P}}_i
     \setminus \mathcal{P}_i} \nabla u \right| \leqslant_{} \| \nabla u
     \|_{L^q (\widetilde{\mathcal{P}}_i \setminus \mathcal{P}_i)} |
     \widetilde{\mathcal{P}}_i \setminus \mathcal{P}_i |^{\frac{q - 1}{q} -
     \frac{1}{2}} \leqslant Ch^{1 - \varepsilon} \| \nabla u \|_{L^q
     (\widetilde{\mathcal{P}}_i \setminus \mathcal{P}_i)} \]
  with $\varepsilon = \frac{2}{q}$. Summing over all the patches, we have by
  the discrete H\"{o}lder inequality with exponents $\frac{q}{2}$ and $\frac{q /
  2}{q / 2 - 1}$ (recall that the number of patches $I$ is assumed uniformly
  bounded) and by the Sobolev embedding $H^1 (\Omega) \rightarrow L^q
  (\Omega)$
  \[ \left( \sum_i \| \overline{\nabla u}^i \|_{0, \widetilde{\mathcal{P}}_i
     \setminus \mathcal{P}_i}^2 \right)^{\frac{1}{2}} \leqslant Ch^{1 -
     \varepsilon} \left( \sum_i \| \nabla u \|^2_{L^q
     (\widetilde{\mathcal{P}}_i \setminus \mathcal{P}_i)}
     \right)^{\frac{1}{2}} \leqslant Ch^{1 - \varepsilon} \| \nabla u \|_{L^q
     (\Omega)} I^{\frac{1}{2} - \frac{1}{q}} \leqslant C_{\varepsilon} h^{1 -
     \varepsilon} \| \nabla u \|_{1, \Omega} \]
  with $C_{\varepsilon}$ depending both on $q$ (thus on $\varepsilon$) and on
  $\Omega$.
  
  Similarly, if $n = 3$, using H\"{o}lder inequality with exponents $6$ and
  $\frac{6}{5}$ and the assumption that $| \widetilde{\mathcal{P}}_i \setminus
  \mathcal{P}_i |$ is of order $h^3$, we have
  \[ \| \overline{\nabla u}^i \|_{0, \widetilde{\mathcal{P}}_i \setminus
     \mathcal{P}_i} \leqslant_{} \| \nabla u \|_{L^6
     (\widetilde{\mathcal{P}}_i \setminus \mathcal{P}_i)} |
     \widetilde{\mathcal{P}}_i \setminus \mathcal{P}_i |^{\frac{5}{6} -
     \frac{1}{2}} \leqslant Ch^{} \| \nabla u \|_{L^6
     (\widetilde{\mathcal{P}}_i \setminus \mathcal{P}_i)}. \]
  Summing over all the patches, we have by the discrete H\"{o}lder inequality
  with exponents 3 and $\frac{3}{2}$ and by the Sobolev embedding $H^1
  (\Omega) \rightarrow L^6 (\Omega)$
  \[ \left( \sum_i \| \overline{\nabla u}^i \|_{0, \widetilde{\mathcal{P}}_i
     \setminus \mathcal{P}_i}^2 \right)^{\frac{1}{2}} \leqslant Ch^{} \|
     \nabla u \|_{L^6 (\Omega)} I^{\frac{1}{3}} \leqslant Ch^{} \| \nabla u
     \|_{1, \Omega} \]
  with a constant depending on $\Omega$ and without introducing an additional
  parameter $\varepsilon$, \textit{i.e.} setting $\varepsilon = 0$. We conclude for
  both $n = 2$ and $3$
\[ \left( \sum_i |u|^2_{1, \widetilde{\mathcal{P}}_i \setminus \mathcal{P}_i}
   \right)^{\frac{1}{2}} \leqslant Ch \left( \sum_i |u|^2_{2,
   \widetilde{\mathcal{P}}_i \setminus \mathcal{P}_i} \right)^{\frac{1}{2}} +
   C_{\varepsilon} h^{1 - \varepsilon} \left( \sum_i \| \nabla u \|^2_{1,
   \widetilde{\mathcal{P}}_i \setminus \mathcal{P}_i} \right)^{\frac{1}{2}}
   \leqslant C_{\varepsilon} h^{1 - \varepsilon} \|u\|_{2, \Omega} .\]\end{proof}

\begin{proof}[Proof of Theorem \ref{theo:a priori gp}]
Let $e_h:=\widehat{\mathcal{I}}_hu-u_h$.
We remark that
\begin{align*}
\interleave e_h\interleave^2
&=a_h(e_h,e_h)\\ 
&= a_{\Omega^{nd}_h}(\widehat{\mathcal{I}}_hu-u_h,e_h)+\sum_i a_{\mathcal{P}_i}(\widehat{\mathcal{I}}_hu-\widehat{\mathcal{I}}_hu_h,\widehat{\mathcal{I}}_h  e_h)\\
&\hspace*{2cm}+\sum_i \frac{1}{h_{\mathcal{P}_i}^2}  ((\widehat{\mathcal{I}}_h-\mbox{Id})u_h, (\mbox{Id}-  \widehat{\mathcal{I}}_h) e_h)_{\mathcal{P}_i}.
\end{align*}
Lemma \ref{lemma:galer} leads to
\begin{equation}\label{est2}
\begin{aligned}
\interleave e_h\interleave^2
&= a_{\Omega^{nd}_h}(\widehat{\mathcal{I}}_hu-u,e_h)
+\sum_i a_{\mathcal{P}_i}(\widehat{\mathcal{I}}_hu,\widehat{\mathcal{I}}_h  e_h)-\sum_i a_{\mathcal{P}_i}(u, e_h)
\\
&=a_{\Omega^{nd}_h}(\widehat{\mathcal{I}}_hu-u,e_h)
+\sum_i a_{\mathcal{P}_i}(\widehat{\mathcal{I}}_hu-u,\widehat{\mathcal{I}}_he_h)+\sum_i a_{\mathcal{P}_i}(u,\widehat{\mathcal{I}}_he_h-e_h).
\end{aligned}
\end{equation}
 We now estimate each term in  the right-hand side.
 Using Proposition \ref{prop:global int} for the interpolation operator $\widehat{I}_h$, cf. Remark \ref{rem:global int}, it holds
\begin{equation}\label{est3}
\begin{aligned}
a_{\Omega^{nd}_h}(\widehat{\mathcal{I}}_hu-u,e_h)
&\leqslant |\widehat{\mathcal{I}}_hu-u|_{1,\Omega}|e_h|_{1,\Omega^{nd}_h}\\
&\leqslant Ch|u|_{2,\Omega}\interleave e_h\interleave
 \end{aligned}
\end{equation}
 and
\begin{equation}\label{est4}
\begin{aligned}
\sum_i  a_{\mathcal{P}_i}(\widehat{\mathcal{I}}_hu-u,\widehat{\mathcal{I}}_he_h)&\leqslant  \sum_i|\widehat{\mathcal{I}}_hu-u|_{1,\mathcal{P}_i}|\widehat{\mathcal{I}}_he_h|_{1,\mathcal{P}_i}\\
&\leqslant Ch|u|_{2,\Omega}\interleave e_h\interleave.
 \end{aligned}\end{equation} 
Concerning the third term, it holds
\begin{equation*}
\begin{aligned}
 \sum_ia_{\mathcal{P}_i}(u,\widehat{\mathcal{I}}_he_h-e_h)
&=\sum_i(\partial_nu,\widehat{\mathcal{I}}_he_h-e_h)_{0,\partial \mathcal{P}_i}-
\sum_i(\Delta u,\widehat{\mathcal{I}}_he_h-e_h)_{0,\mathcal{P}_i}\\
&=\sum_i(\Delta u,\widehat{\mathcal{I}}_he_h-e_h)_{0,\widetilde{\mathcal{P}}_i\backslash \mathcal{P}_i}
-\sum_i(\Delta u,\widehat{\mathcal{I}}_he_h-e_h)_{0,\mathcal{P}_i}
+\sum_ia_{\widetilde{\mathcal{P}}_i\backslash \mathcal{P}_i}(u,\widehat{\mathcal{I}}_he_h-e_h).
\end{aligned}\end{equation*}   
Since $h_{\widetilde{\mathcal{P}}_i}\leqslant Ch,$
we obtain the Poincar\'e type inequality
\begin{equation}\label{eq:poincare}
\|\widehat{\mathcal{I}}_he_h-e_h\|_{0,\widetilde{\mathcal{P}}_i\backslash \mathcal{P}_i}
\leqslant Ch\|\widehat{\mathcal{I}}_he_h-e_h\|_{1,\widetilde{\mathcal{P}}_i\backslash \mathcal{P}_i}.
\end{equation}  
By Cauchy-Schwarz inequality, inequality \eqref{eq:poincare}, 
Lemma \ref{lemma:patch} it holds 
\begin{equation}\label{est6}
\sum_i(\Delta u,\widehat{\mathcal{I}}_he_h-e_h)_{0,\widetilde{\mathcal{P}}_i\backslash \mathcal{P}_i}
\leqslant
\left(\sum_i|u|_{2,\widetilde{\mathcal{P}}_i\backslash \mathcal{P}_i}^2\right)^{1/2}\left(\sum_i\|\widehat{\mathcal{I}}_he_h-e_h\|_{0,\widetilde{\mathcal{P}}_i\backslash \mathcal{P}_i}^2\right)^{1/2}
\leqslant
Ch|u|_{2,\Omega}\interleave e_h\interleave.
\end{equation}  
Again, using Cauchy-Schwarz inequality, we obtain
\begin{equation}\label{est6bis}
\sum_i(\Delta u,\widehat{\mathcal{I}}_he_h-e_h)_{0,\mathcal{P}_i}
\leqslant
\left(\sum_i|u|_{2,\mathcal{P}_i}^2\right)^{1/2}\left(\sum_i\|\widehat{\mathcal{I}}_he_h-e_h\|_{0,\mathcal{P}_i}^2\right)^{1/2}
\leqslant
Ch|u|_{2,\Omega}\interleave e_h\interleave.\end{equation}  
By Lemmas \ref{lemma:patch} and \ref{NewLemma} 
\begin{equation}\label{est7}
\sum_i a_{\widetilde{\mathcal{P}}_i \setminus \mathcal{P}_i}  (u,
\widehat{\mathcal{I}}_h e_h - e_h) \leqslant 
\left( \sum_i |u|_{1,\widetilde{\mathcal{P}}_i \setminus \mathcal{P}_i}^2 \right)^{\frac{1}{2}}  \left( \sum_i |\widehat{\mathcal{I}}_h e_h - e_h |_{1, \widetilde{\mathcal{P}}_i \setminus \mathcal{P}_i}^2 \right)^{\frac{1}{2}}  
\leqslant C_{\varepsilon} h^{1 - \varepsilon}
     \| u \|_{2, \Omega} \interleave e_h \interleave .
\end{equation}
 Thus, Proposition \ref{prop:global int} for the interpolation operator $\widehat{I}_h$ and  \eqref{est2}, \eqref{est3}, 
 \eqref{est4}, \eqref{est6}, \eqref{est6bis}, \eqref{est7} lead to
  \begin{equation*}
  \interleave u-u_h\interleave
  \leqslant \interleave u-\widehat{\mathcal{I}}_hu\interleave+
  \interleave\widehat{\mathcal{I}}_hu-u_h\interleave\leqslant C_{\varepsilon}h^{1-\varepsilon} |u|_{2,\Omega}.
  \end{equation*}

  Consider now the solution $w \in V$ to
  \[ a (w, v) = (u - \widehat{\mathcal{I}}_h u_h, v)_{}, \quad \forall v \in
     V. \]
  Observe
  \[ a_{} (\widehat{\mathcal{I}}_h u_h, \widehat{\mathcal{I}}_h w) = a_h
     (u_h, \widehat{\mathcal{I}}_h w) - \sum_i a_{\widetilde{\mathcal{P}}_i
     \setminus \mathcal{P}_i} (u_h - \widehat{\mathcal{I}}_h u_h,
     \widehat{\mathcal{I}}_h w) = (f, \widehat{\mathcal{I}}_h w) - \sum_i
     a_{\widetilde{\mathcal{P}}_i \setminus \mathcal{P}_i} (u_h -
     \widehat{\mathcal{I}}_h u_h, \widehat{\mathcal{I}}_h w) \]
  so that
  \[ a_{} (u - \widehat{\mathcal{I}}_h u_h, \widehat{\mathcal{I}}_h w) =
     \sum_i a_{\widetilde{\mathcal{P}}_i \setminus \mathcal{P}_i} (u_h -
     \widehat{\mathcal{I}}_h u_h, \widehat{\mathcal{I}}_h w) = \sum_i
     a_{\widetilde{\mathcal{P}}_i \setminus \mathcal{P}_i} (e_h -
     \widehat{\mathcal{I}}_h e_h, \widehat{\mathcal{I}}_h w) \]
  with $e_h$=$u_h - \widehat{\mathcal{I}}_h u$. Thus,
 \begin{equation}\label{estimate12}
\begin{aligned} \| u - \widehat{\mathcal{I}}_h u_h \|_{0, \Omega}^2 &= a (u -
     \widehat{\mathcal{I}}_h u_h, w - \widehat{\mathcal{I}}_h w) + \sum_i
     a_{\widetilde{\mathcal{P}}_i \setminus \mathcal{P}_i} (e_h -
     \widehat{\mathcal{I}}_h e_h, \widehat{\mathcal{I}}_h w) \\
     & \leqslant Ch | u - \widehat{\mathcal{I}}_h u_h |_{1, \Omega} | w |_{2, \Omega} +
     \left( \sum_i | e_h - \widehat{\mathcal{I}}_h e_h |^2_{1,
     \widetilde{\mathcal{P}}_i \setminus \mathcal{P}_i} \right)^{\frac{1}{2}}
     \left( \sum_i | \widehat{\mathcal{I}}_h w |^2_{1,
     \widetilde{\mathcal{P}}_i \setminus \mathcal{P}_i} \right)^{\frac{1}{2}},
  \end{aligned} \end{equation}
  where we have used the interpolation estimate. Using Lemma
  \ref{lemma:galer} and (above), we obtain
  \[ \left( \sum_i | e_h - \widehat{\mathcal{I}}_h e_h |^2_{1,
     \widetilde{\mathcal{P}}_i \setminus \mathcal{P}_i} \right)^{\frac{1}{2}}
     \leqslant C \interleave e_h \interleave \leqslant C_{\varepsilon} h^{1 -
     \varepsilon} | u |_{2, \Omega} \]
  and
  \begin{align*}
  | u - \widehat{\mathcal{I}}_h u_h |_{1, \Omega} &\leqslant C \left( | u
     - u_h |^2_{1, \Omega^{{nd}}} + | u_h - \widehat{\mathcal{I}}_h u_h
     |^2_{1, \Omega^{{nd}}} + \sum_i | u - \widehat{\mathcal{I}}_h u_h
     |_{1, \mathcal{P}_i}^2 \right)^{\frac{1}{2}} \\
     &\leqslant C_{\varepsilon}
     h^{1 - \varepsilon} | u |_{2, \Omega} + \left( \sum_i | e_h -
     \widehat{\mathcal{I}}_h e_h |^2_{1, \widetilde{\mathcal{P}}_i \setminus
     \mathcal{P}_i} \right)^{\frac{1}{2}} \leqslant C_{\varepsilon} h^{1 -
     \varepsilon} | u |_{2, \Omega} .
  \end{align*} 
  We also have by regularity of elliptic problem in a convex polygon
  (polyhedron)
  \begin{equation*}
    |w|_{2, \Omega} \leqslant C \|u - \widehat{\mathcal{I}}_h
    u_h \|_{0, \Omega} 
  \end{equation*}
  and by Lemma \ref{NewLemma}
  \[ \left( \sum_i | \widehat{\mathcal{I}}_h w |^2_{1,
     \widetilde{\mathcal{P}}_i \setminus \mathcal{P}_i} \right)^{\frac{1}{2}}
     \leqslant | w - \widehat{\mathcal{I}}_h w |_{1, \Omega} + \left( \sum_i
     | w |^2_{1, \widetilde{\mathcal{P}}_i \setminus \mathcal{P}_i}
     \right)^{\frac{1}{2}} \leqslant Ch | w |_{2, \Omega} + C_{\varepsilon}
     h^{1 - \varepsilon} \| w \|_{2, \Omega} \leqslant C_{\varepsilon} h^{1 -
     \varepsilon} \|u - \widehat{\mathcal{I}}_h u_h \|_{0, \Omega} . \]
  Substituting into (\ref{estimate12}) gives
  \[ \| u - \widehat{\mathcal{I}}_h u_h \|_{0, \Omega}^2 \leqslant
     C_{\varepsilon} h^{2 - 2 \varepsilon} | u |_{2, \Omega} \|u -
     \widehat{\mathcal{I}}_h u_h \|_{0, \Omega} \]
  which in combination with the triangle inequality and the estimate for \ $\|
  u_h - \widehat{\mathcal{I}}_h u_h \|_{0, \mathcal{P}_i}$ contained in the
  estimate for $\interleave u - u_h \interleave$ gives the announced
  $L^2$-error estimate.

 \end{proof}

\subsection{Conditioning of the system matrix}

We are now going to prove that the conditioning number  of the finite element matrix  associated to 
the bilinear form $a_h$ of the alternative scheme does not deteriorate in the presence of degenerated cells: it is of order $1/h^2$ if the mesh is quasi-uniform in a sense specified below.

\begin{prpstn}[Conditioning]\label{prop:cond gp}
Suppose that Assumption \ref{cond: plat2} holds
and the union of mesh cells $\omega_x$ attached to each node $x$ of $\mathcal{T}_h$ satisfies
\begin{equation}\label{eq:cond2}
 c_1h^n\le |\omega_x|\le c_2h^n
\end{equation}
with some constants $c_1,c_2$.
Then, the conditioning number $\kappa(\boldsymbol{A})$ of the matrix $\boldsymbol{A}$ associated to 
the bilinear form $a_h$ in $V_h$ satisfies
\begin{equation*}
\kappa(\boldsymbol{A})\leqslant Ch^{-2}.
\end{equation*} 
\end{prpstn}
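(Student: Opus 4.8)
The plan is to exploit the symmetry and positive definiteness of $a_h$. Writing $\lambda_{\max}$ and $\lambda_{\min}$ for the extreme eigenvalues of $\boldsymbol A$, we have $\|\boldsymbol A\|_2=\lambda_{\max}$ and $\|\boldsymbol A^{-1}\|_2=\lambda_{\min}^{-1}$, both being Rayleigh quotients of the energy norm:
\[
\lambda_{\max}=\max_{\boldsymbol v\neq 0}\frac{\interleave v_h\interleave^2}{|\boldsymbol v|_2^2},
\qquad
\lambda_{\min}=\min_{\boldsymbol v\neq 0}\frac{\interleave v_h\interleave^2}{|\boldsymbol v|_2^2},
\]
where $v_h\in V_h$ is the function represented by $\boldsymbol v$. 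It then suffices to prove $\lambda_{\max}\le Ch^{n-2}$ and $\lambda_{\min}\ge ch^n$, whence $\kappa(\boldsymbol A)=\lambda_{\max}/\lambda_{\min}\le Ch^{-2}$. A preliminary ingredient is the mass--norm equivalence $c\,h^n|\boldsymbol v|_2^2\le\|v_h\|_{0,\Omega}^2\le C\,h^n|\boldsymbol v|_2^2$. Its lower bound follows from the shape-\emph{independent} local estimate $\|v_h\|_{0,K}^2\ge c\,|K|\sum_{x\in K}|v_h(x)|^2$, valid on any simplex (degenerate or not), summed over $K$ and combined with $|\omega_x|\ge c_1h^n$ from \eqref{eq:cond2}; the upper bound uses only that all cells have diameter $\le h$ and that the number of cells meeting a node is uniformly bounded.

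For $\lambda_{\max}$ I would bound the three contributions to $\interleave v_h\interleave^2$ separately by $Ch^{n-2}|\boldsymbol v|_2^2$. On each regular cell $K\subset\Omega^{nd}_h$ the gradient of $v_h$ is constant and controlled by the nodal values, so $|v_h|_{1,K}^2\le C h_K^{n-2}\max_{x\in K}|v_h(x)|^2\le Ch^{n-2}\max_{x\in K}|v_h(x)|^2$; the crucial point is that $n-2\ge 0$, so a small but regular cell only improves the estimate, and summing over cells of bounded valence gives the first term. For the second term, $\nabla\widehat{\mathcal{I}}_h v_h$ on $\mathcal{P}_i$ equals the constant gradient of $v_h$ on $K_i^{nd}$, and since $|\mathcal{P}_i|\le Ch_{\mathcal{P}_i}^n$ and $h_{K_i^{nd}}\simeq\rho_{K_i^{nd}}\simeq h_{\mathcal{P}_i}\le Ch$ by Assumption \ref{cond: plat2}, the same computation yields $|\widehat{\mathcal{I}}_h v_h|_{1,\mathcal{P}_i}^2\le Ch^{n-2}\max_{x}|v_h(x)|^2$. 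The penalty term is analogous: the $L^\infty$-boundedness of the extension (Remark \ref{rem:global int}) gives $\|v_h-\widehat{\mathcal{I}}_h v_h\|_{0,\mathcal{P}_i}^2\le Ch_{\mathcal{P}_i}^n\max_x|v_h(x)|^2$, so the factor $h_{\mathcal{P}_i}^{-2}$ again produces $Ch_{\mathcal{P}_i}^{n-2}\max_x|v_h(x)|^2\le Ch^{n-2}\max_x|v_h(x)|^2$. Summing over the at most $I_{\max}$ patches yields $\interleave v_h\interleave^2\le Ch^{n-2}|\boldsymbol v|_2^2$.

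The delicate part, and the main obstacle, is the lower bound $\lambda_{\min}\ge ch^n$, for which it suffices (by the mass--norm equivalence) to establish the Poincar\'e-type inequality $\|v_h\|_{0,\Omega}^2\le C\interleave v_h\interleave^2$ with a constant independent of the degenerate geometry; the difficulty is that $\interleave\cdot\interleave$ never sees the possibly huge gradient of $v_h$ on the degenerate cells. The idea is to compare $v_h$ with $\widehat{\mathcal{I}}_h v_h$, which, being an element of $V_h\subset H^1_0(\Omega)$, obeys the usual Poincar\'e inequality $\|\widehat{\mathcal{I}}_h v_h\|_{0,\Omega}\le C|\widehat{\mathcal{I}}_h v_h|_{1,\Omega}$. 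I would first check $|\widehat{\mathcal{I}}_h v_h|_{1,\Omega}^2\le C\interleave v_h\interleave^2$: the contribution of each $\mathcal{P}_i$ is a term of $\interleave v_h\interleave^2$, outside the extended patches $\widehat{\mathcal{I}}_h v_h=v_h$, and on the rings $\widetilde{\mathcal{P}}_i\setminus\mathcal{P}_i$ one writes $\widehat{\mathcal{I}}_h v_h=v_h-(v_h-\widehat{\mathcal{I}}_h v_h)$ and uses Lemma \ref{lemma:patch} to bound $|v_h-\widehat{\mathcal{I}}_h v_h|_{1,\widetilde{\mathcal{P}}_i\setminus\mathcal{P}_i}$ by $C\interleave v_h\interleave$. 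It then remains to estimate $\|v_h-\widehat{\mathcal{I}}_h v_h\|_{0,\Omega}$: on each $\mathcal{P}_i$ it equals $h_{\mathcal{P}_i}$ times the penalty contribution, hence is $\le Ch\interleave v_h\interleave$; and on each ring, $v_h-\widehat{\mathcal{I}}_h v_h$ vanishes on the outer boundary $\partial\widetilde{\mathcal{P}}_i$ (there $\widehat{\mathcal{I}}_h v_h$ and $v_h$ share the same nodal values, so the piecewise-linear difference is zero on the boundary faces), so a Friedrichs inequality on the ring, whose cells are all regular, gives $\|v_h-\widehat{\mathcal{I}}_h v_h\|_{0,\widetilde{\mathcal{P}}_i\setminus\mathcal{P}_i}\le Ch\,|v_h-\widehat{\mathcal{I}}_h v_h|_{1,\widetilde{\mathcal{P}}_i\setminus\mathcal{P}_i}\le Ch\interleave v_h\interleave$ by Lemma \ref{lemma:patch} once more. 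The triangle inequality $\|v_h\|_{0,\Omega}\le\|\widehat{\mathcal{I}}_h v_h\|_{0,\Omega}+\|v_h-\widehat{\mathcal{I}}_h v_h\|_{0,\Omega}\le C\interleave v_h\interleave$ concludes.

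Putting everything together, $\lambda_{\min}\ge c\,\|v_h\|_{0,\Omega}^2/|\boldsymbol v|_2^2\ge c\,h^n$ while $\lambda_{\max}\le Ch^{n-2}$, so $\kappa(\boldsymbol A)\le Ch^{-2}$. I expect the only substantial work to be the Poincar\'e-type lower bound of the previous paragraph, where two points need care: \emph{(i)} ensuring that all constants (Poincar\'e on $\Omega$, Friedrichs on the rings, and the equivalence constants of Lemma \ref{lemma:patch}) depend only on the mesh-regularity parameters and not on the shapes of the degenerate cells, which holds because the degenerate cells are confined to the $\mathcal{P}_i$ and enter $\interleave\cdot\interleave$ only through $\widehat{\mathcal{I}}_h v_h$ and the scaled penalty; and \emph{(ii)} the boundary bookkeeping showing that $v_h-\widehat{\mathcal{I}}_h v_h$ truly vanishes on $\partial\widetilde{\mathcal{P}}_i$, which rests on $\widehat{\mathcal{I}}_h v_h$ coinciding with $\mathcal{I}_h v_h=v_h$ at all nodes outside the extended patches.
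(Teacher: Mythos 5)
Your proposal is correct and follows essentially the same route as the paper: the $\ell^2$--$L^2$ mass equivalence from \eqref{eq:cond2}, an upper bound $\interleave v_h\interleave^2\leqslant Ch^{n-2}|\boldsymbol{v}|_2^2$ via inverse inequalities on regular cells and the boundedness of the extension (the content of the paper's continuity Lemma \ref{lemma:cont ah}), and the Poincar\'e-type lower bound $\|v_h\|_{0,\Omega}\leqslant C\interleave v_h\interleave$ obtained by passing through $\widehat{\mathcal{I}}_h v_h$ and Lemma \ref{lemma:patch} (the paper's coercivity Lemma \ref{lemma:coer ah}). The only cosmetic deviation is in controlling $\|v_h-\widehat{\mathcal{I}}_h v_h\|_{0,\widetilde{\mathcal{P}}_i\setminus\mathcal{P}_i}$: you use the vanishing of the difference on $\partial\widetilde{\mathcal{P}}_i$ plus a scaled Friedrichs inequality on the ring, whereas the paper re-runs the finite-dimensional homogeneity argument of Lemma \ref{lemma:patch}; both are valid and rest on the same compactness-type control of the ring geometry.
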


\begin{rmrk}
Condition  \eqref{eq:cond2} is satisfied for instance if the mesh is quasi-uniform in the sense $h_K\ge c_3h$ for all $K\in\mathcal{T}_h$, and each patch is constituted of a degenerated cell and a non-degenerated cell.
This is the situation considered in our numerical simulations given in Section \ref{sec:sim num}.
\end{rmrk}

Before proving Proposition \ref{prop:cond gp}, we first introduce some auxiliary results:


\begin{lmm}[Coercivity of $a_h$]\label{lemma:coer ah}
Under the assumptions of Proposition \ref{prop:cond gp}, it holds for all $v_h\in V_h$ 
$$a_h(v_h,v_h)\geqslant C \|v_h\|_{0,\Omega}^2.$$
\end{lmm}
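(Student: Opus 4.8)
The plan is to show that $a_h(v_h,v_h)=\interleave v_h\interleave^2$ controls $\|v_h\|_{0,\Omega}^2$ from above, \emph{i.e.} that the triple norm dominates the $L^2$-norm up to a constant independent of $h$ (in fact the constant will carry the right power of $h$ so that the stated inequality with a fixed $C$ holds, exploiting that $v_h$ is piecewise linear on cells of diameter $\sim h$). The natural route is to split $\Omega$ into the non-degenerate region $\Omega^{nd}_h=\Omega\setminus(\cup_i\overline{\mathcal{P}_i})$ and the patches $\mathcal{P}_i$, bound $\|v_h\|_{0,\Omega}^2$ by the sum of its restrictions, and estimate each piece by the corresponding term of $\interleave v_h\interleave^2$.

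First I would treat the contribution on $\Omega^{nd}_h$. Since $v_h\in V\subset H^1_0(\Omega)$, a Poincar\'e--Friedrichs inequality on $\Omega$ gives $\|v_h\|_{0,\Omega}\le C|v_h|_{1,\Omega}$; but I must be careful because $|v_h|_{1,\Omega}$ is \emph{not} controlled by $\interleave v_h\interleave$ (the gradient can be huge on the degenerate cells). The fix is to replace $v_h$ by the aligned function $\Pi_h v_h$ (equal to $v_h$ on $\Omega^{nd}_h$ and to $\widehat{\mathcal{I}}_h v_h$ on each $\mathcal{P}_i$), whose global $H^1$ semi-norm satisfies $|\Pi_h v_h|_{1,\Omega}^2\le |v_h|_{1,\Omega^{nd}_h}^2+\sum_i|\widehat{\mathcal{I}}_h v_h|_{1,\mathcal{P}_i}^2\le\interleave v_h\interleave^2$ directly from the definition of the triple norm. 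Provided $\Pi_h v_h\in H^1_0(\Omega)$ — which holds because the extension $\widehat{\mathcal{I}}_h v_h$ respects the homogeneous boundary condition on patches touching $\partial\Omega$ and matches $v_h$ across interior patch boundaries — Poincar\'e gives $\|\Pi_h v_h\|_{0,\Omega}\le C|\Pi_h v_h|_{1,\Omega}\le C\interleave v_h\interleave$.

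It remains to pass from $\|\Pi_h v_h\|_{0,\Omega}$ back to $\|v_h\|_{0,\Omega}$, which amounts to controlling the patch discrepancy $\|v_h-\widehat{\mathcal{I}}_h v_h\|_{0,\mathcal{P}_i}$ and the error on $\widetilde{\mathcal{P}}_i\setminus\mathcal{P}_i$. The first is immediate: by the third term of the triple norm, $\sum_i\|v_h-\widehat{\mathcal{I}}_h v_h\|_{0,\mathcal{P}_i}^2\le\sum_i h_{\mathcal{P}_i}^2\,\frac{1}{h_{\mathcal{P}_i}^2}\|v_h-\widehat{\mathcal{I}}_h v_h\|_{0,\mathcal{P}_i}^2\le Ch^2\interleave v_h\interleave^2$. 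Hence
\begin{equation*}
\|v_h\|_{0,\Omega}\le\|\Pi_h v_h\|_{0,\Omega}+\Bigl(\sum_i\|v_h-\widehat{\mathcal{I}}_h v_h\|_{0,\mathcal{P}_i}^2\Bigr)^{1/2}\le C\interleave v_h\interleave,
\end{equation*}
where on $\widetilde{\mathcal{P}}_i\setminus\mathcal{P}_i$ I would use Lemma \ref{lemma:patch} together with the Poincar\'e inequality \eqref{eq:poincare} to absorb the remaining discrepancy into $\interleave v_h\interleave$. Squaring yields $\|v_h\|_{0,\Omega}^2\le C\,a_h(v_h,v_h)$, which is the claim.

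The main obstacle I anticipate is verifying that $\Pi_h v_h$ genuinely lies in $H^1_0(\Omega)$ so that Poincar\'e applies, and more subtly that the boundary term from patches meeting $\partial\Omega$ does not spoil coercivity; this is exactly where the geometric hypotheses of Assumption \ref{cond: plat} on $\partial\mathcal{P}_i\cap\partial\Omega$ and the boundary correction built into $\widehat{\mathcal{I}}_h$ (Definition \ref{DefHatIh}, \emph{cf.} Lemma \ref{lemma:IntpP}) are needed. The homogeneity bookkeeping — tracking that condition \eqref{eq:cond2} gives $|\omega_x|\sim h^n$ uniformly so that the constant $C$ is mesh-independent — is routine but must be done cleanly.
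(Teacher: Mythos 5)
Your overall architecture --- apply a Poincar\'e inequality to an ``aligned'' function and absorb its discrepancy with $v_h$ via the penalty term and Lemma \ref{lemma:patch} --- is the right one, but the central step fails as written: $\Pi_h v_h$ does \emph{not} belong to $H^1_0(\Omega)$ in general. Your justification, that $\widehat{\mathcal{I}}_h v_h$ ``matches $v_h$ across interior patch boundaries,'' is false. On $\mathcal{P}_i$ one has $\widehat{\mathcal{I}}_h v_h=\mbox{Ext}\,(v_h|_{K_i^{nd}})$, which agrees with $v_h$ on $K_i^{nd}$ only; its trace on the remaining part of $\partial\mathcal{P}_i$ (the faces of the degenerated cells lying on $\partial\mathcal{P}_i$) differs from the exterior trace $v_h|_{\partial\mathcal{P}_i}$. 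Concretely, in the two-cell patch of Fig.~\ref{fig:patch} one has $(v_h-\widehat{\mathcal{I}}_h v_h)|_{K_i^{deg}}=[\nabla v_h]_{F_i}\,y$, which is nonzero on $\partial\mathcal{P}_i$ whenever the gradient jump is; this mismatch is exactly what the third term of $a_h$ penalizes (were it identically zero, the penalty would be vacuous), and it is also why the paper states the $H^1$ error \eqref{eq:a priori GP} as a broken seminorm: $\Pi_h u_h$ is only piecewise $H^1$. Hence Poincar\'e cannot be applied to $\Pi_h v_h$; to salvage your route you would need a broken Poincar\'e inequality with jump terms over $\partial\mathcal{P}_i$, which you neither state nor prove, and controlling those jumps would bring you back to the very estimates you were trying to avoid.

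The paper sidesteps this by applying Poincar\'e to the \emph{globally conforming} interpolant: by Definition \ref{DefHatIh}, $\widehat{\mathcal{I}}_h v_h$ is a function in $V_h\subset H^1_0(\Omega)$, continuous through the transition layers $\widetilde{\mathcal{P}}_i\setminus\mathcal{P}_i$ where it interpolates between the patch polynomial and the nodal values of $v_h$. One writes $\|v_h\|_{0,\Omega}\leqslant\|\widehat{\mathcal{I}}_h v_h\|_{0,\Omega}+\|v_h-\widehat{\mathcal{I}}_h v_h\|_{0,\Omega}$, bounds the first term by $C|\widehat{\mathcal{I}}_h v_h|_{1,\Omega}$, and then pays the price of conformity on the transition layers: $|\widehat{\mathcal{I}}_h v_h|_{1,\Omega}^2$ is estimated by $|v_h|^2_{1,\Omega^{nd}_h}+\sum_i|\widehat{\mathcal{I}}_h v_h|^2_{1,\mathcal{P}_i}+\sum_i|v_h-\widehat{\mathcal{I}}_h v_h|^2_{1,\widetilde{\mathcal{P}}_i\setminus\mathcal{P}_i}$, and the layer terms (both this $H^1$ sum and $\sum_i\|v_h-\widehat{\mathcal{I}}_h v_h\|^2_{0,\widetilde{\mathcal{P}}_i\setminus\mathcal{P}_i}$) are bounded by $C\interleave v_h\interleave^2$ via the finite-dimensionality and homogeneity argument in (the proof of) Lemma \ref{lemma:patch}. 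Note in passing that your plan invokes Lemma \ref{lemma:patch} only for the $H^1$ seminorm and \eqref{eq:poincare} for the $L^2$ part, but \eqref{eq:poincare} has the full $H^1$ norm on its right-hand side, so its $L^2$ contribution must be absorbed (small $h$) or, as the paper does, obtained directly by rerunning the compactness argument. Your treatment of $\|v_h-\widehat{\mathcal{I}}_h v_h\|_{0,\mathcal{P}_i}$ via the penalty term is correct and identical to the paper's; with $\Pi_h$ replaced by $\widehat{\mathcal{I}}_h$ throughout, your argument essentially becomes the paper's proof.
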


\begin{proof}Let $v_h\in V_h$.
Observe, using triangle and Poincar\'e inequalities,
\begin{align*}
 \|v_h \|_{0, \Omega} &\leqslant \| \widehat{\mathcal{I}}_h v_h \|_{0, \Omega} 
    + \|v_h - \widehat{\mathcal{I}}_h v_h \|_{0, \Omega} \\
   &\leqslant C |   \widehat{\mathcal{I}}_h v_h |_{1, \Omega} 
    +\|v_h - \widehat{\mathcal{I}}_h v_h \|_{0, \Omega} \\
   &\leqslant C \left( | v_h
   |^2_{1, \Omega_h^{nd}} + \sum_i | \widehat{\mathcal{I}}_h v_h
   |_{1, \mathcal{P}_i}^2 + \sum_i |v_h - \widehat{\mathcal{I}}_h v_h |_{1,
   \widetilde{\mathcal{P}}_i \setminus \mathcal{P}_i}^2 + \sum_i \| v_h -
   \widehat{\mathcal{I}}_h v_h \|_{0, \widetilde{\mathcal{P}}_i}^2
   \right)^{\frac{1}{2}}  .
\end{align*}
Following the proof of Lemma \ref{lemma:patch}, we see easily
\[ \sum_i |v_h - \widehat{\mathcal{I}}_h v_h |_{1, \widetilde{\mathcal{P}}_i
   \setminus \mathcal{P}_i}^2 + \sum_i \| v_h - \widehat{\mathcal{I}}_h v_h
   \|_{0, \widetilde{\mathcal{P}}_i \setminus \mathcal{P}_i}^2 \leqslant C
   \interleave v_h \interleave^2. \]
This implies $\|v_h \|_{0, \Omega}^{} \leqslant C \interleave v_h \interleave$
which is equivalent to the desired result.

\end{proof}

\begin{lmm}[Continuity of $a_h$]\label{lemma:cont ah}
Under the assumptions of Proposition \ref{prop:cond gp}, it holds for all $u_h,v_h\in V_h$
$$a_h(u_h,v_h)\leqslant \frac{C}{h^2} \|u_h\|_{0,\Omega}\|v_h\|_{0,\Omega}.$$
\end{lmm}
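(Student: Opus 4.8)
The plan is to bound each of the three terms in $a_h(u_h,v_h)$ separately, showing that every term is controlled by $\frac{C}{h^2}\|u_h\|_{0,\Omega}\|v_h\|_{0,\Omega}$, and then conclude by summing. The common ingredient throughout will be an inverse inequality: on a quasi-uniform mesh (guaranteed by \eqref{eq:cond2}), for any $w_h\in V_h$ one has $|w_h|_{1,\Omega}\leqslant \frac{C}{h}\|w_h\|_{0,\Omega}$, obtained cell by cell from the standard scaling argument on each simplex of diameter $\sim h$, combined with the fact that $|\omega_x|\ge c_1 h^n$ prevents the mesh from having vanishingly small cells.

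For the first term, $a_{\Omega^{nd}_h}(u_h,v_h)\leqslant |u_h|_{1,\Omega}|v_h|_{1,\Omega}$, and the inverse inequality immediately gives the bound $\frac{C}{h^2}\|u_h\|_{0,\Omega}\|v_h\|_{0,\Omega}$. For the second term, I would first note that $\widehat{\mathcal{I}}_h$ is $L^2$-stable on $V_h$, i.e. $\|\widehat{\mathcal{I}}_h w_h\|_{0,\mathcal{P}_i}\leqslant C\|w_h\|_{0,\widetilde{\mathcal{P}}_i}$; this follows from the boundedness of the extension operator $\mbox{Ext}$ established in Remark \ref{rem:global int} together with the equivalence of norms on the finite-dimensional polynomial space and a homogeneity (scaling) argument, exactly in the spirit of Lemma \ref{lemma:patch}. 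Applying the inverse inequality to the piecewise-linear functions $\widehat{\mathcal{I}}_h u_h$ and $\widehat{\mathcal{I}}_h v_h$ on each patch $\mathcal{P}_i$ and then using this stability yields $\sum_i a_{\mathcal{P}_i}(\widehat{\mathcal{I}}_h u_h,\widehat{\mathcal{I}}_h v_h)\leqslant \frac{C}{h^2}\|u_h\|_{0,\Omega}\|v_h\|_{0,\Omega}$. The third term is the easiest: by Cauchy--Schwarz, $\sum_i\frac{1}{h_{\mathcal{P}_i}^2}\|(\mbox{Id}-\widehat{\mathcal{I}}_h)u_h\|_{0,\mathcal{P}_i}\|(\mbox{Id}-\widehat{\mathcal{I}}_h)v_h\|_{0,\mathcal{P}_i}$, and since $h_{\mathcal{P}_i}\ge C h$ while $\|(\mbox{Id}-\widehat{\mathcal{I}}_h)w_h\|_{0,\mathcal{P}_i}\leqslant C\|w_h\|_{0,\widetilde{\mathcal{P}}_i}$ (again by $L^2$-stability of $\widehat{\mathcal{I}}_h$ and the triangle inequality), this term is also bounded by $\frac{C}{h^2}\|u_h\|_{0,\Omega}\|v_h\|_{0,\Omega}$.

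Summing the three estimates and using that the extended patches $\widetilde{\mathcal{P}}_i$ are mutually disjoint (so that $\sum_i\|w_h\|_{0,\widetilde{\mathcal{P}}_i}^2\leqslant\|w_h\|_{0,\Omega}^2$) gives the claim. I expect the main technical obstacle to be the $L^2$-stability of $\widehat{\mathcal{I}}_h$ on patches, i.e. controlling $\|\widehat{\mathcal{I}}_h w_h\|_{0,\mathcal{P}_i}$ by $\|w_h\|_{0,\widetilde{\mathcal{P}}_i}$ uniformly in the mesh; the subtlety is that $\widehat{\mathcal{I}}_h w_h$ is defined by extending the polynomial from the \emph{non-degenerated} cell $K_i^{nd}$, so one must verify that the $L^2(\mathcal{P}_i)$ norm of this extension is comparable to the $L^2(K_i^{nd})$ norm of the restriction, uniformly over admissible geometries --- which is precisely where the nondegeneracy hypothesis $h_{\mathcal{P}_i}/\rho_{K_i^{nd}}\leqslant c_0$ from Assumption \ref{cond: plat2} and the bounded-cardinality/homogeneity argument of Lemma \ref{lemma:patch} are needed.
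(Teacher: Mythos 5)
Your proposal follows essentially the same route as the paper's proof: the same term-by-term treatment of the three pieces of $a_h$, the same $L^2$-stability of $\widehat{\mathcal{I}}_h$ on patches (the paper states it as $\|\widehat{\mathcal{I}}_h w_h\|_{0,\mathcal{P}_i}\leqslant C\|w_h\|_{0,K_i^{nd}}$, obtained, exactly as you anticipate, from equivalence of norms on the finite-dimensional space of affine polynomials together with the regularity of $\mathcal{P}_i$ and $K_i^{nd}$; note that for $w_h\in V_h$ one has $\mathcal{I}_h(w_h)|_{K_i^{nd}}=w_h|_{K_i^{nd}}$, so the stability is local to the patch and you do not actually need the enlarged patches $\widetilde{\mathcal{P}}_i$ in the final summation), and the same Cauchy--Schwarz argument with $h_{\mathcal{P}_i}\gtrsim h$ for the penalty term.

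One assertion in your opening is, however, false as stated and must be localized: the \emph{global} inverse inequality $|w_h|_{1,\Omega}\leqslant Ch^{-1}\|w_h\|_{0,\Omega}$ does not hold on these meshes. Condition \eqref{eq:cond2} controls the measures $|\omega_x|$, not the inscribed radii of individual cells: a degenerated cell with $\rho_{K^{deg}}\sim h^2$ (the very situation the paper is built around, cf.\ Proposition \ref{prop:poor}) carries a hat function whose gradient is of order $1/\rho_{K^{deg}}$, so the inverse constant on such a cell is $1/\rho_K$, not $1/h$. Indeed, if your global inverse inequality were true, the standard form $a$ itself would already be continuous with constant $C/h^2$ and the entire motivation for replacing it by $a_h$ would evaporate. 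Fortunately, your use of it is harmless: you only ever need to bound $a_{\Omega^{nd}_h}(u_h,v_h)$, so replace $|u_h|_{1,\Omega}|v_h|_{1,\Omega}$ by $|u_h|_{1,\Omega^{nd}_h}|v_h|_{1,\Omega^{nd}_h}$ and apply the inverse inequality cell by cell on $\Omega^{nd}_h$, where every cell satisfies the Ciarlet condition (all degenerate cells lie inside the patches $\mathcal{P}_i$); on each $\mathcal{P}_i$ the inverse inequality is applied to the single affine polynomial $\widehat{\mathcal{I}}_h u_h$ on the chunky patch, which is exactly what the paper does. With this localization your argument coincides with the paper's proof.
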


\begin{proof}Let $u_h,v_h\in V_h$.
Since the cells of $\Omega^{nd}_h$ and the patches $\mathcal{P}_i$ are regular, we obtain using the inverse inequality
\[ a_{\Omega^{nd}_h} (u_h, v_h) \leqslant \frac{C}{h^2} \| u_h \|_{0,\Omega^{nd}_h} \| v_h \|_{0,\Omega^{nd}_h} \]
  and 
   \[ a_{\mathcal{P}_i}
   (\widehat{\mathcal{I}}_h u_h, \widehat{\mathcal{I}}_h v_h) 
   \leqslant \frac{C}{h^2} \| \widehat{\mathcal{I}}_hu_h \|_{0,\mathcal{P}_i} \| \widehat{\mathcal{I}}_hv_h \|_{0,\mathcal{P}_i}.\]
Using the equivalence 
of the norm in finite dimensional spaces and the fact that 
$\mathcal{P}_i$ and $K_i^{nd}$ are regular,
for all $w_h\in V_h$, it holds
$$
\|\widehat{\mathcal{I}}_h w_h\|_{0,\mathcal{P}_i}\leqslant C \|w_h\|_{0,K_i^{nd}}
.$$
We deduce that
  \[
   \|(\mbox{Id}- \widehat{\mathcal{I}}_h) w_h\|_{0,\mathcal{P}_i} 
   +\| \widehat{\mathcal{I}}_hw_h \|_{0,\mathcal{P}_i}
\leqslant  C(\|  w_h \|_{0,\mathcal{P}_i}
+\| w_h \|_{0,K_i^{nd}})\leqslant C  \|  w_h \|_{0,\mathcal{P}_i}
   \] 
   which leads to the conclusion.
  \end{proof}
  

\begin{proof}[Proof of Proposition \ref{prop:cond gp}]
We first remark using  \eqref{eq:cond2}, that
there exists $C_1,C_2>0$ such that for all $w_h\in V_h$ 
and $\boldsymbol{w}$
its associated vector in $\mathbb{R}^{ N}$
\begin{equation}\label{eq:vh v}
C_1h^{n/2}|\boldsymbol{w}|_2\leqslant \|w_h\|_0\leqslant C_2h^{n/2}|\boldsymbol{w}|_2.
\end{equation}
Indeed, denoting by $\mathcal{N}_h$ the set of nodes of $\mathcal{T}_h$, by $\mathcal{N}_h(K)$ the set of nodes of a simplex $K\in\mathcal{T}_h$, and using $\sim$ to denote the equivalence with universal constant, as in (\ref{eq:vh v}), we can conclude
$$
\|w_h\|_0^2 \sim \sum_{K\in\mathcal{T}_h}|K|\sum_{x\in\mathcal{N}_h(K)}|w_h(x)|^2
=\sum_{x\in\mathcal{N}_h}|w_h(x)|^2|\omega_x| \sim h^{n}|\boldsymbol{w}|_2^2.
$$
In what follows,  $\boldsymbol{v}\in \mathbb{R}^{ N}$ denotes the vector associated to $v_h\in V_h$. Inequality \eqref{eq:vh v} with Lemma \ref{lemma:cont ah} imply
$$
\|\boldsymbol{A}\|_2=\sup_{\boldsymbol{v}\in\mathbb{R}^{ N}}\dfrac{(\boldsymbol{A}\boldsymbol{v},\boldsymbol{v})}{|\boldsymbol{v}|_2^2}=\sup_{\boldsymbol{v}\in\mathbb{R}^{ N}}\dfrac{a(v_h,v_h)}{|\boldsymbol{v}|_2^2}
\leqslant Ch^n\sup_{v_h\in V_h}\dfrac{a(v_h,v_h)}{ \|v_h\|_0^2}
\leqslant Ch^{n-2}.
$$
Similarly, \eqref{eq:vh v} with Lemma \ref{lemma:coer ah} imply
$$\|\boldsymbol{A}^{-1}\|_2
=\sup_{\boldsymbol{v}\in\mathbb{R}^{ N}}\dfrac{|\boldsymbol{v}|_2^2}{(\boldsymbol{A}\boldsymbol{v},\boldsymbol{v})} = \sup_{\boldsymbol{v}\in\mathbb{R}^{ N}}\dfrac{|\boldsymbol{v}|_2^2}{a(v_h,v_h)}
\leqslant Ch^{-n}\sup_{v_h\in V_h}\dfrac{ \|v_h\|_0^2}{a(v_h,v_h)}
\leqslant Ch^{-n}.
$$
These estimates lead to the desired result.
\end{proof}

\subsection{An equivalent, easily implementable  variational formulation with interior penalty}\label{sec:ghost penalty}

Since implementing the interpolation operator $\widehat{\mathcal{I}}_h$ is not necessary trivial, we rewrite in this section the bilinear form $a_h$ given in \eqref{eq:bil GP} in an equivalent form, which introduces the jumps of the gradients over the interior facets. The resulting method is similar to the ghost penalty from \cite{burmancras}. 

\begin{lmm}
Under Assumption \ref{cond: plat2}, suppose moreover that each patch $\mathcal{P}_i$ 
is composed of a non-degenerated cell $K_i^{nd}$
and a degenerated cell $K^{deg}_i$. 
Denote by $F_i$ the facet between $K_i^{nd}$ and $K^{deg}_i$, as
 illustrated in Fig.  \ref{fig:patch}.
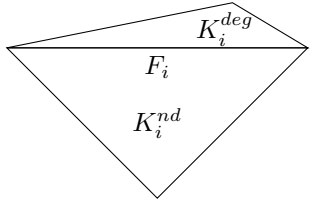
\begin{figure}[H]\begin{center}
    \begin{tikzpicture}[scale=1]
\draw (0,0) -- (4,0) -- (2,-2) -- cycle;
\draw (0,0) -- (4,0) -- (3,0.6) -- cycle;
\path (2,-1) node {$K^{nd}_i$};
\path (2.9,0.25) node {$K^{deg}_i$};
\path (2,-0.25) node {$F_i$};
\end{tikzpicture}
\caption{Example of patch $\mathcal{P}_i=K_i^{nd}\cup K^{deg}_i$.}
\label{fig:patch}
\end{center}\end{figure}
Then, for all $u_h,v_h\in V_h$, it holds 
\begin{equation}\label{eq:equi form}
 a_h (u_h, v_h) = a_{\Omega^{nd}_h} (u_h, v_h) + \sum_i \frac{|\mathcal{P}_i|}{|K_i^{nd}|}a_{K_i^{nd}}   ( u_h, v_h) 
   + \kappa_n\sum_i \frac{|K^{deg}_i|^3}{h_{\mathcal{P}_i}^2|F_i|^2}[\nabla u_h]_{F_i}\cdot[\nabla v_h]_{F_i}
   \end{equation}
with $\kappa_n:=\frac{2n^2}{(n+1)(n+2)}$.   
\end{lmm}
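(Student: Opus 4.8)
The plan is to verify the identity \eqref{eq:equi form} term by term against the definition \eqref{eq:bil GP}, exploiting the fact that on each patch $\mathcal{P}_i = K_i^{nd}\cup K_i^{deg}$ every function of $V_h$ is a single linear polynomial on each of the two cells. The first terms $a_{\Omega^{nd}_h}(u_h,v_h)$ are literally identical, so nothing is needed there. For the second term, I would use Definition \ref{DefHatIh}: the restriction $\widehat{\mathcal{I}}_h u_h|_{\mathcal{P}_i}$ is the single linear polynomial obtained by extending $u_h|_{K_i^{nd}}$, so its constant gradient equals $\nabla u_h|_{K_i^{nd}}$. Hence $a_{\mathcal{P}_i}(\widehat{\mathcal{I}}_h u_h,\widehat{\mathcal{I}}_h v_h) = |\mathcal{P}_i|\,\nabla u_h|_{K_i^{nd}}\cdot\nabla v_h|_{K_i^{nd}} = \tfrac{|\mathcal{P}_i|}{|K_i^{nd}|}\,a_{K_i^{nd}}(u_h,v_h)$, which is exactly the second term of \eqref{eq:equi form}.

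The third (penalty) term is where the real work lies. Since $(\mbox{Id}-\widehat{\mathcal{I}}_h)u_h$ vanishes on $K_i^{nd}$ (there $u_h$ coincides with its own extension), only the integral over $K_i^{deg}$ survives, where $w_u := (\mbox{Id}-\widehat{\mathcal{I}}_h)u_h = u_h|_{K_i^{deg}} - p_u$ and $p_u$ is the polynomial extended from $K_i^{nd}$. The key observation is that $u_h\in V_h$ is continuous across $F_i$, so $w_u$ is a linear function vanishing on the whole hyperplane containing $F_i$; therefore $w_u = \alpha_u\lambda_0$, where $\lambda_0$ is the barycentric coordinate of $K_i^{deg}$ associated with the vertex opposite $F_i$ and $\alpha_u$ is a scalar. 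Consequently $\bigl((\mbox{Id}-\widehat{\mathcal{I}}_h)u_h,(\mbox{Id}-\widehat{\mathcal{I}}_h)v_h\bigr)_{\mathcal{P}_i} = \alpha_u\alpha_v\int_{K_i^{deg}}\lambda_0^2$.

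Two elementary facts then close the computation. First, the standard barycentric integration formula gives $\int_{K_i^{deg}}\lambda_0^2 = \tfrac{2}{(n+1)(n+2)}|K_i^{deg}|$. Second, since $\nabla w_u = \nabla u_h|_{K_i^{deg}} - \nabla u_h|_{K_i^{nd}} = [\nabla u_h]_{F_i}$ and $\nabla w_u = \alpha_u\nabla\lambda_0$ with $|\nabla\lambda_0| = 1/h_0 = |F_i|/(n|K_i^{deg}|)$ (from the height relation $|K_i^{deg}| = \tfrac1n|F_i|h_0$ for the opposite vertex), taking the scalar product of $\alpha_u\nabla\lambda_0 = [\nabla u_h]_{F_i}$ with its $v$-counterpart yields $\alpha_u\alpha_v = \tfrac{n^2|K_i^{deg}|^2}{|F_i|^2}[\nabla u_h]_{F_i}\cdot[\nabla v_h]_{F_i}$. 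Multiplying these two formulas, inserting into the previous display and dividing by $h_{\mathcal{P}_i}^2$ reproduces exactly $\kappa_n\tfrac{|K_i^{deg}|^3}{h_{\mathcal{P}_i}^2|F_i|^2}[\nabla u_h]_{F_i}\cdot[\nabla v_h]_{F_i}$ with $\kappa_n = \tfrac{2n^2}{(n+1)(n+2)}$.

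The main obstacle is the bookkeeping in the third term: recognizing that continuity of $u_h$ forces $w_u$ to be a pure multiple of the single barycentric coordinate $\lambda_0$ (equivalently, that the gradient jump is purely normal to $F_i$), and then converting the coefficient $\alpha_u$ into the gradient jump through the height/volume relation. Note that no orientation convention for $[\cdot]_{F_i}$ is needed, since the jumps enter only through the products $\alpha_u\alpha_v$ and $[\nabla u_h]_{F_i}\cdot[\nabla v_h]_{F_i}$, both sign-independent. Everything else is algebra, and the constant $\kappa_n$ falls out of the two formulas above.
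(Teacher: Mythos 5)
Your proof is correct and takes essentially the same route as the paper's: you identify $(\mbox{Id}-\widehat{\mathcal{I}}_h)u_h$ on $K^{deg}_i$ as a linear function vanishing on the hyperplane through $F_i$ and compute its second moment, which is exactly the paper's computation in coordinates with $y$ normal to $F_i$ (your $\alpha_u\lambda_0$ is the paper's $[\nabla u_h]_{F_i}\,y$, and your barycentric identity $\int_{K^{deg}_i}\lambda_0^2=\tfrac{2}{(n+1)(n+2)}|K^{deg}_i|$ replaces the paper's slab integral $\int_0^{h_i}y^2|F_i|(1-y/h_i)^{n-1}\,dy$, both hinging on $|K^{deg}_i|=\tfrac1n|F_i|h_i$). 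Your explicit justifications that the gradient jump is purely normal and that the extension term reduces to $\tfrac{|\mathcal{P}_i|}{|K_i^{nd}|}a_{K_i^{nd}}(u_h,v_h)$ are steps the paper leaves implicit, but they are expository refinements rather than a different argument.
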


\begin{proof}
Let us assume, without loss of generality, that the coordinate axes are chosen so that the $y$ axis is orthogonal to $F_i$, as in Fig. \ref{fig:bad cell}. We also denote by $h_i$ the height of the simplex $K_i^{deg}$ drawn to the base $F_i$.
\begin{figure}[H]\begin{center}
    \begin{tikzpicture}[scale=1]
\draw (0,0) -- (4,0) -- (3,0.6) -- cycle;
\draw[<->,dashed] (4.2,0) -- (4.2,0.6);
\draw[->] (0,0) -- (5,0);
\draw[->] (0,0) -- (0,1);
\path (2.9,0.25) node {$K^{deg}_i$};
\path (0,1.2) node {$y$};
\path (5.2,0) node {$x$};
\path (2,-0.25) node {$F_i$};
\path (4.5,0.3) node {$h_i$};
\end{tikzpicture}
\caption{Degenerated cell $K^{deg}_i$. 
}
\label{fig:bad cell}
\end{center}\end{figure}
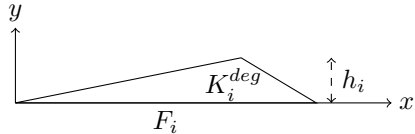
We first remark that, for all $u_h\in V_h$,
\begin{equation*}
(\mbox{Id}-\widehat{\mathcal{I}}_h)u_h=\left\{\begin{array}{ll}
\left[\nabla u_h\right]_{F_i}y&\mbox{ on } K^{deg}_i,\\
0&\mbox{ on } K_i^{nd}.
\end{array}\right.
\end{equation*}
Hence, we deduce that
\[
((\mbox{Id}- \widehat{\mathcal{I}}_h) u_h, (\mbox{Id}-\widehat{\mathcal{I}}_h) v_h)_{\mathcal{P}_i}
=\left[\nabla u_h\right]_{F_i} \cdot \left[\nabla v_h\right]_{F_i}\int_{K^{deg}_i}y^2.
\]
Moreover 
\begin{equation*}
\int_{K^{deg}_i}y^2
=\displaystyle\int_{0}^{h_i} y^2 |F_i| \left(1-\frac{y}{h_i}\right)^{n-1}\,dy
=|F_i|h_i^3\frac{2}{n(n+1)(n+2)}
=\frac{|K^{deg}_i|^3}{|F_i|^2}\frac{2n^2}{(n+1)(n+2)},
\end{equation*}
since $|K^{deg}_i|=\frac{1}{n}|F_i|h_i$. This leads to the conclusion.

\end{proof}

\section{Numerical simulations}\label{sec:sim num}

In this section, we will illustrate with some numerical examples the sharpness of the \textit{a priori} estimates of Theorem \ref{th: a priori} 
and  the efficiency of the method proposed in Section \ref{sec:ghost penalty} to ensure the good conditioning of the matrix.
The simulations of this section have been implemented using the finite element library FEniCS \cite{LoggMardal2012}.


We consider problem \eqref{eq:poisson} on the domain $\Omega:=(0,1)\times(0,1)$ with the right hand side 
$f(x,y)=2\pi^2\sin(\pi x)\sin(\pi y)$ so that the exact solution is given by $u(x,y)=\sin(\pi x)\sin(\pi y)$
for $(x,y)\in \Omega$. To construct the meshes $\mathcal{T}_h$ in all our numerical experiments presented below,  we start from a uniform Cartesian mesh of step $h$ and degenerate certain cells so that for each degenerated cell $K^{deg}$, 
$h_{K^{deg}}=h$, $\rho_{K^{deg}}\sim h^2$
(more precisely, the distance between the longer side and the opposite node will be equal to $h^2$).
In doing so, we take care that each degenerated cell be included in a patch of surrounding cells, and the patches corresponding to distinct degenerated cells do not intersect each other, cf. Fig. \ref{simu:mesh}. Assumptions \ref{cond: plat} and \ref{cond: plat2} are thus satisfied.


We report in Fig. \ref{fig:simu no ghost} the numerical results obtained on a series of meshes with decreasing $h$, taking 10 degenerated cells (as described above) for every $h$. We use here the standard scheme \eqref{discret prob} to produce the approximated solution $u_h$. The $L^2$ and $H^1$ absolute errors between $u$ and $u_h$ are given on the left in Fig.~\ref{fig:simu no ghost}. The optimal convergence rates are indeed observed, as predicted by Theorem \ref{th: a priori}. However, the conditioning number of the associated finite element matrix is much bigger than $1/h^2$, which would be expected on a quasi-uniform mesh with step $h$. This is illustrated by Fig. \ref{fig:simu no ghost}, right. The estimate on the conditioning number from Proposition \ref{prop:poor} is recovered, \textit{i.e.} $\kappa(A)\sim 1/(h\varepsilon) \sim 1/h^3$, since  $\varepsilon=\rho_{K^{deg}}\sim h^2$.

We now turn to the alternative scheme \eqref{disc prob GP}. We have implemented it using the reformulation \eqref{eq:equi form}.
The results are reported in Fig.~\ref{fig:simu ghost} using the same meshes containing 10 degenerated cells as above. The errors are reported on the left. We recall that Theorem \ref{theo:a priori gp} predicts the optimal convergence in the $H^1$ norm only if the approximate solution $u_h$ is post-processed on the degenerated cells, by replacing the actual polynomial giving $u_h$ on such a cell by the extension $\Pi_hu_h$ of $u_h$ from the attached regular cell, cf. the definition of $|u-\Pi_hu_h|_{1,\Omega}$ in \eqref{eq:a priori GP}. Numerical experiments confirm the optimal $H^1$ convergence of the post-processed solution and also the necessity of such a post-processing. Indeed, the error with respect to the non-processed approximate solution $|u-u_h|_{1,\Omega}$ is not of optimal order $h$. It is also much bigger than $|u-\Pi_hu_h|_{1,\Omega}$. We also note that the optimal $L^2$ convergence is recovered without any post-processing, as predicted by Theorem \ref{theo:a priori gp}. We recall that the introduction of the alternative scheme \eqref{disc prob GP} was motivated by the desire to obtain less ill-conditioned matrices. The results in Fig. \ref{fig:simu ghost} (right) confirm that conditioning number for this scheme is indeed no longer affected by the presence of degenerated cells, in accordance with Proposition \ref{prop:cond gp}.

We recall that the theory of Section \ref{sec:alter} concerning the alternative scheme \eqref{disc prob GP} is developed under Assumption \ref{cond: plat2} supposing, in particular, that the number of degenerate cells is uniformly bounded. In the numerical experiments reported in Figs. \ref{simu:mesh full} and \ref{fig:simu full}, we wish to check if such an assumption is indeed necessary. We consider to this end a sequence of meshes constructed as above, but containing an increasing number of degenerate cells, cf. Fig.~\ref{simu:mesh full}. We consider namely the densest packing of the degenerated cells allowed by Assumption \ref{cond: plat} (the non-intersection of the surrounding patches), which gives approximately 5.5\% of degenerated cells. Otherwise, the procedure for degenerating the cells is as above, in particular, $\rho_{K^{deg}_i}\approx h^2$. 
The results are presented in Fig.~\ref{fig:simu full} both for the standard scheme on the left, and the alternative scheme \eqref{disc prob GP} on the right. We first remark that the standard scheme remains optimally convergence in $L^2$ and $H^1$, in accordance with Theorem \ref{th: a priori}. On the contrary, the alternative scheme \eqref{disc prob GP} does not converge. This observation highlights the sharpness of the results given in Theorem \ref{theo:a priori gp}.


%
\begin{figure}
\begin{center}
\includegraphics[width=0.45\textwidth]{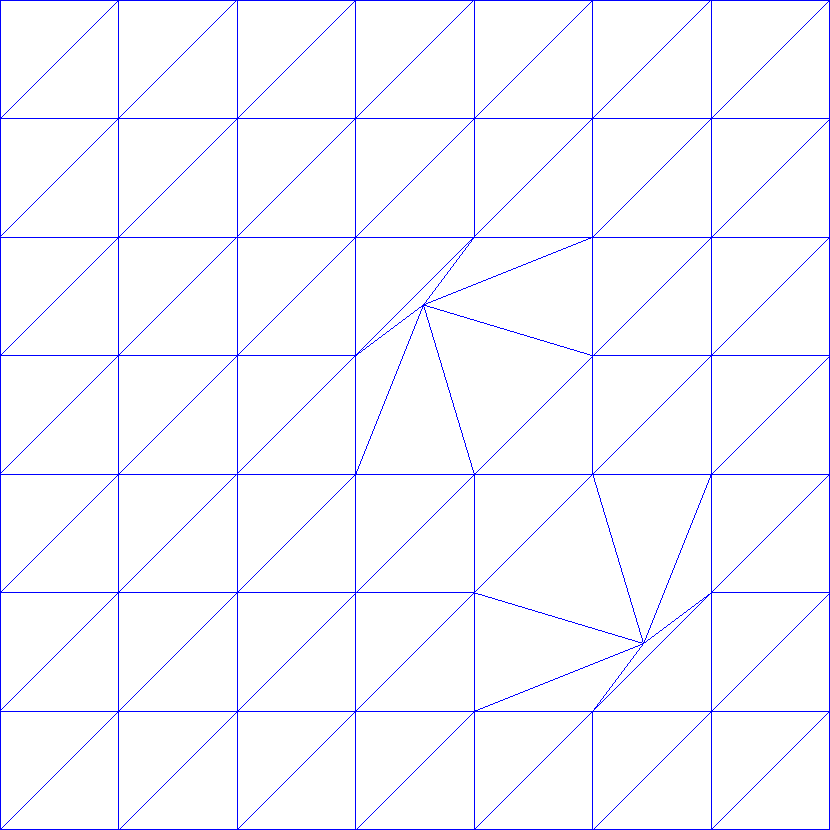}
\hspace*{1mm}
\includegraphics[width=0.45\textwidth]{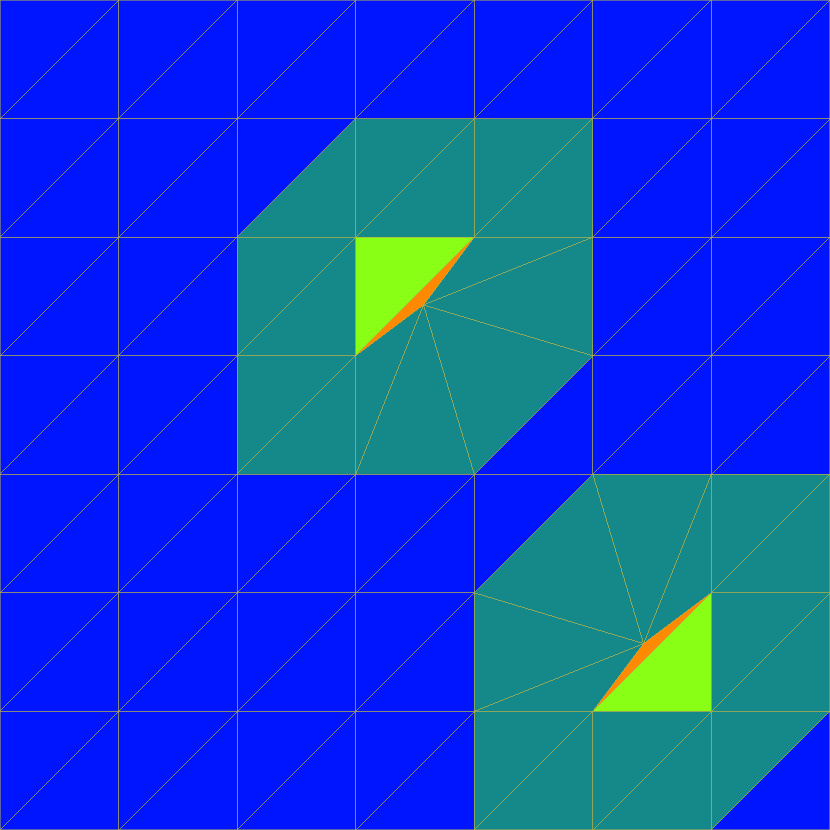}
\end{center}
\caption{Example of a uniform mesh with 2 arbitrarily chosen degenerated cells $K^{deg}_i$, $\rho_{K^{deg}_i}\sim h^2$ (left). On the right, the degenerated cells are painted in red, the adjacent regular cells in light green, and the surrounding patches in dark green. 
}
\label{simu:mesh}
\end{figure}

\begin{figure}
\begin{center}
\begin{tikzpicture}[thick,scale=0.85, every node/.style={scale=1.0}]
\begin{loglogaxis}[xlabel=$h$,xmin=1e-3,xmax=0.1
,legend pos=south east
,legend style={ font=\large}
, legend columns=1]
 \addplot[color=blue,mark=triangle*] coordinates { 
(0.0625,0.00591807395578)
(0.037037037037,0.00200486493141)
(0.0227272727273,0.000733311783114)
(0.013698630137,0.000260727150577)
(0.00826446280992,9.48617059382e-05)
(0.005,3.46668703305e-05)
(0.00302114803625,1.26445070934e-05)
(0.00183486238532,4.66319402008e-06)
 };
 \addplot[color=red,mark=triangle*] coordinates { 
(0.0625,0.227631436842)
(0.037037037037,0.132527028446)
(0.0227272727273,0.0801291009057)
(0.013698630137,0.0478752570248)
(0.00826446280992,0.0288737866471)
(0.005,0.0174566741603)
(0.00302114803625,0.0105434842572)
(0.00183486238532,0.00640292566017)
 };
\logLogSlopeTriangle{0.5}{0.2}{0.65}{1}{red};
\logLogSlopeTriangle{0.5}{0.2}{0.1}{2}{blue};

 \legend{$\|u-u_h\|_{0,\Omega}$,$|u-u_h|_{1,\Omega}$}
\end{loglogaxis}
\end{tikzpicture}
\begin{tikzpicture}[thick,scale=0.85, every node/.style={scale=1.0}]
\begin{loglogaxis}[xlabel=$h$,xmin=0.003,xmax=0.1,ymin=0.1,ymax=50
,legend pos=south east
,legend style={ font=\large}
, legend columns=1]
 \addplot[color=blue,mark=triangle*] coordinates { 
(0.0625,1.52852159484)
(0.037037037037,2.35115070965)
(0.0227272727273,3.63738037747)
(0.013698630137,5.84036637178)
(0.00826446280992,9.48782094788)
(0.005,15.4909495445)
};
 \logLogSlopeTriangleinv{0.5}{0.2}{0.5}{1}{blue};
 \legend{Conditioning $\times \frac{1}{h^2}$}
\end{loglogaxis}
\end{tikzpicture}
\caption{Errors (left) and conditioning (right) for the standard finite element scheme \eqref{discret prob} on a sequence of meshes containing 10 degenerated cells  with $\rho_{K^{deg}_i}\sim h^2$.}\label{fig:simu no ghost}
\end{center}\end{figure}
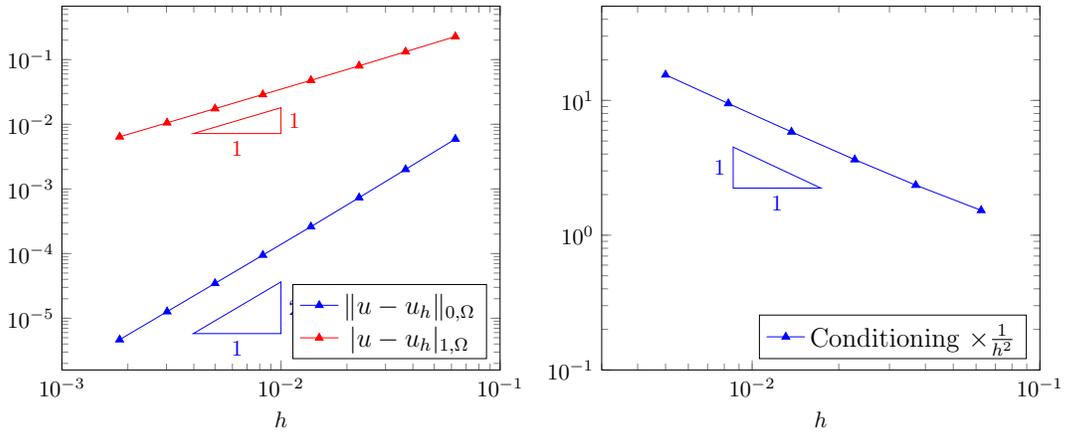


\begin{figure}
\begin{center}
\begin{tikzpicture}[thick,scale=0.85, every node/.style={scale=1.0}]
\begin{loglogaxis}[xlabel=$h$,xmin=0.001,xmax=0.1
,legend pos=south east
,legend style={ font=\large}
, legend columns=1]
 \addplot[color=blue,mark=triangle*] coordinates { 
(0.0625,0.00528574421618)
(0.037037037037,0.00195392227992)
(0.0227272727273,0.000715020945069)
(0.013698630137,0.000270040998205)
(0.00826446280992,9.6267651369e-05)
(0.005,3.61526213146e-05)
(0.00302114803625,1.3163167625e-05)
(0.00183486238532,4.88222978312e-06)
};
 \addplot[color=red,mark=triangle*] coordinates { 
(0.0625,0.425267054005)
(0.037037037037,0.284322649985)
(0.0227272727273,0.187926936202)
(0.013698630137,0.159970710161)
(0.00826446280992,0.108642894084)
(0.005,0.0866477328645)
(0.00302114803625,0.0558798561826)
(0.00183486238532,0.0437111656367)

 };
  \addplot[color=green,mark=triangle*] coordinates { 
  (0.0625,0.23611888184)
(0.037037037037,0.133223348021)
(0.0227272727273,0.0808597827443)
(0.013698630137,0.0482812867443)
(0.00826446280992,0.0291132717014)
(0.005,0.0176208327571)
(0.00302114803625,0.0106288590634)
(0.00183486238532,0.00645337326594)
 };
\logLogSlopeTriangle{0.5}{0.2}{0.6}{1}{green};

\logLogSlopeTriangle{0.5}{0.2}{0.1}{2}{blue};
 \legend{$\|u-u_h\|_{0,\Omega}$,$|u-u_h|_{1,\Omega}$,$|u-\Pi_hu_h|_{1,\Omega}$}
\end{loglogaxis}
\end{tikzpicture}
\begin{tikzpicture}[thick,scale=0.85, every node/.style={scale=1.0}]
\begin{loglogaxis}[xlabel=$h$,xmin=0.003,xmax=0.1,ymin=0.1,ymax=10
,legend pos=south east
,legend style={ font=\large}
, legend columns=1]
 \addplot[color=blue,mark=triangle*] coordinates { 
(0.0625,0.425731450503)
(0.037037037037,0.427411668339)
(0.0227272727273,0.41755323515)
(0.013698630137,0.417314886056)
(0.00826446280992,0.421930428057)
(0.005,0.418247903402)
 };
 \legend{Conditioning $\times \frac{1}{h^2}$}
\end{loglogaxis}
\end{tikzpicture}

\caption{Errors (left) and conditioning (right) for the alternative finite element scheme \eqref{disc prob GP} on a sequence of meshes containing 10 degenerated cells  with $\rho_{K^{deg}_i}\sim h^2$. The $H^1$ norm is calculated both using the approximate solution $u_h$ directly and extending it to the degenerated cells from the adjacent regular cells, as in \eqref{eq:a priori GP}.}\label{fig:simu ghost}
\end{center}\end{figure}
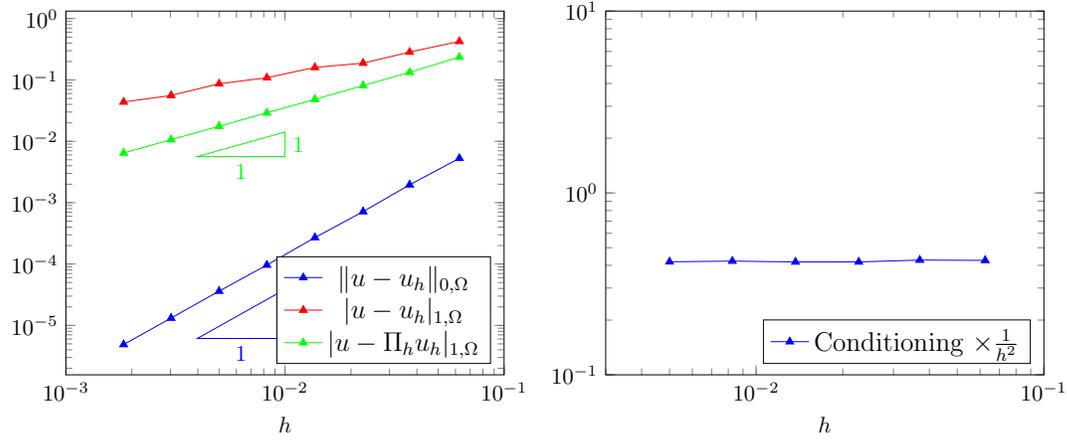

\begin{figure}
\begin{center}
\includegraphics[width=0.4\textwidth]{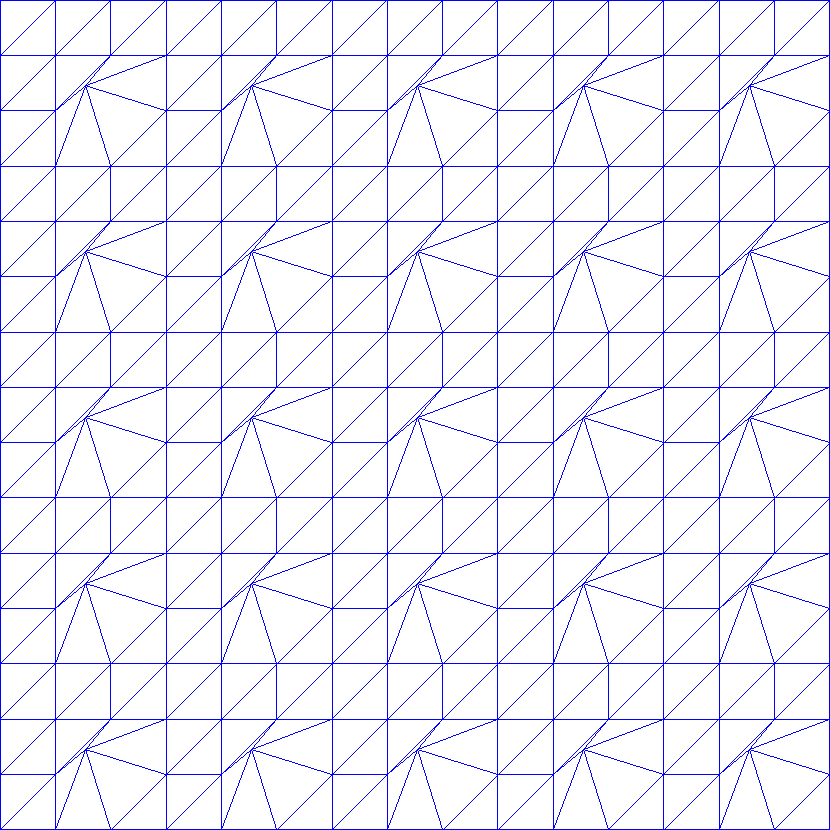}
\hspace*{1mm}
\includegraphics[width=0.4\textwidth]{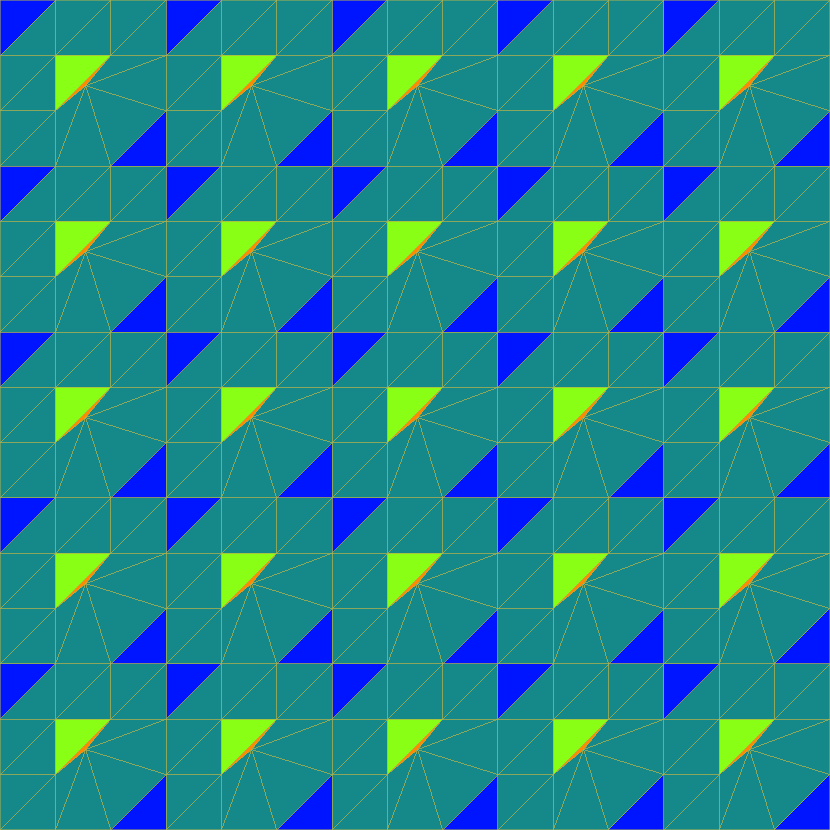}
\end{center}
\caption{Example of a mesh with densely packed degenerated cells ($\approx$ 5.5\% of degenerated cells).
Left: the mesh. Right: the disjoint patches surrounding the degenerated cells.}
\label{simu:mesh full}
\end{figure}

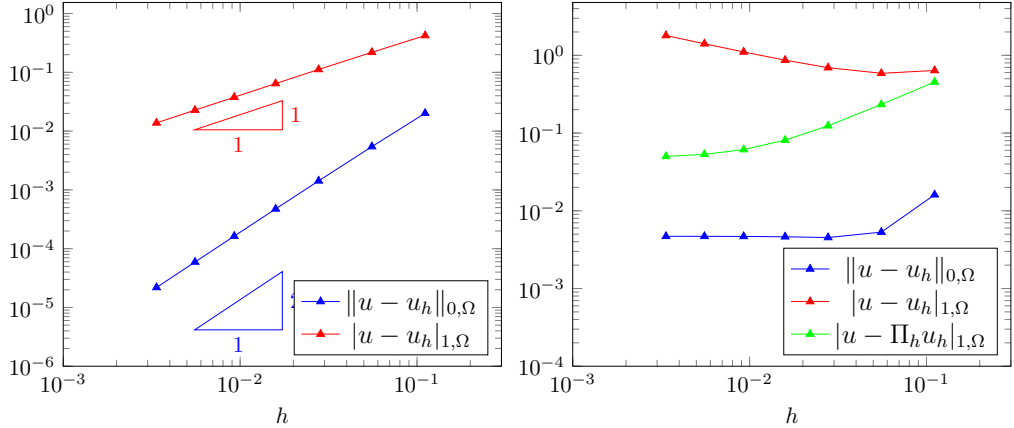
\begin{figure}
\begin{center}
\begin{tikzpicture}[thick,scale=0.85, every node/.style={scale=1.0}]
\begin{loglogaxis}[xlabel=$h$,xmin=1e-3,xmax=0.3 ,ymin=1e-6
,legend pos=south east
,legend style={ font=\large}
, legend columns=1]
 \addplot[color=blue,mark=triangle*] coordinates { 
(0.111111111111,0.0201517258154)
(0.0555555555556,0.00544753881014)
(0.0277777777778,0.001423694032)
(0.015873015873,0.000474569374132)
(0.00925925925926,0.000163431263844)
(0.00555555555556,5.92419132919e-05)
(0.003367003367,2.1850291539e-05)
 };
 \addplot[color=red,mark=triangle*] coordinates { 
(0.111111111111,0.422930378543)
(0.0555555555556,0.219230849866)
(0.0277777777778,0.111796737639)
(0.015873015873,0.064458939686)
(0.00925925925926,0.0377950531277)
(0.00555555555556,0.0227438110237)
(0.003367003367,0.0138083905613)
 };

\logLogSlopeTriangle{0.5}{0.2}{0.65}{1}{red};

\logLogSlopeTriangle{0.5}{0.2}{0.1}{2}{blue};

 \legend{$\|u-u_h\|_{0,\Omega}$,$|u-u_h|_{1,\Omega}$}
\end{loglogaxis}
\end{tikzpicture}
\begin{tikzpicture}[thick,scale=0.85, every node/.style={scale=1.0}]
\begin{loglogaxis}[xlabel=$h$,xmin=0.001,xmax=0.3,ymin=1e-4
,legend pos=south east
,legend style={ font=\large}
, legend columns=1]
 \addplot[color=blue,mark=triangle*] coordinates { 
(0.111111111111,0.0160770073613)
(0.0555555555556,0.00531152800099)
(0.0277777777778,0.00452794821245)
(0.015873015873,0.00463008253513)
(0.00925925925926,0.00467739198935)
(0.00555555555556,0.0046911963673)
(0.003367003367,0.00469410392502)
};
 \addplot[color=red,mark=triangle*] coordinates { 
(0.111111111111,0.638447877887)
(0.0555555555556,0.587648638013)
(0.0277777777778,0.694069940631)
(0.015873015873,0.866852944079)
(0.00925925925926,1.10595484942)
(0.00555555555556,1.4098386501)
(0.003367003367,1.79857130098)
 };
  \addplot[color=green,mark=triangle*] coordinates { 
  (0.111111111111,0.4541)
(0.0555555555556,0.2334)
(0.0277777777778,0.1241)
(0.015873015873,0.0811)
(0.00925925925926,0.0613)
(0.00555555555556,0.0532340817064)
(0.003367003367,0.0500879512663)
 };

 \legend{$\|u-u_h\|_{0,\Omega}$,$|u-u_h|_{1,\Omega}$,$|u-\Pi_hu_h|_{1,\Omega}$}
\end{loglogaxis}
\end{tikzpicture}
\caption{Errors on the meshes containing $\approx$ 5.5\% of degenerated cells.
Left: the standard scheme \eqref{discret prob}. Right: alternative scheme \eqref{disc prob GP}. }\label{fig:simu full}
\end{center}\end{figure}

\section*{Acknowledgements}
The authors are thankful to Marek Bucki (TexiSense) and Franz Chouly (Universit\'e de Bourgogne Franche-Comt\'e) for inspiring discussions which were at the origin of this project and for constant support during its realization. 

\bibliographystyle{plain}
\bibliography{biblio}
\end{document}